\newtheorem{thm}{Theorem}[section]
\newtheorem{lem}[thm]{Lemma}
\newtheorem{cor}[thm]{Corollary}
\newtheorem{prop}[thm]{Proposition}
\def\square{\vbox{
      \hrule height 0.4pt
      \hbox{\vrule width 0.4pt height 5.5pt \kern 5.5pt \vrule width 0.4pt}
      \hrule height 0.4pt}}
\def\id{\mathrm{id}}
\def\even{\mathrm{even}}
\def\ch\mathrm{c h}
\def\ab{\mathrm{a b}}
\def\im{\mathrm{I m}}
\def\Hom{\mathrm{H o m}}
\def\End{\mathrm{End}}
\def\dim{\mathrm{dim}}
\def\End{\mathrm{End}}
\def\odd{\mathrm{odd}}
\def\hocolim{\operatorname{ho co l i m}}
\def\Lie{\mathrm{Lie}}
\def\proj{\mathrm{pro j}}
\def\GL{\mathrm{G L}}
\newcommand{\Z}{\mathbb{Z}}
\newcommand{\CoH}{\mathbf{CoH}}
\newcommand{\bfk}{\ensuremath{{\mathbf{k}}}}
\newcommand{\calS}{\ensuremath{\mathcal{S}}}
\newcommand{\calH}{\ensuremath{\mathcal{H}}}
\newcommand{\EHP}{\mathrm{EHP}}
\let\la=\langle
\let\ra=\rangle
\numberwithin{equation}{section}
\begin{document}

\newcommand{\auths}[1]{\textrm{#1},}
\newcommand{\artTitle}[1]{\textsl{#1},}
\newcommand{\jTitle}[1]{\textrm{#1}}
\newcommand{\Vol}[1]{\textbf{#1}}
\newcommand{\Year}[1]{\textrm{(#1)}}
\newcommand{\Pages}[1]{\textrm{#1}}

\address{Department of Mathematics\\
National University of Singapore\\
Singapore 119260\\
Republic of Singapore}\email{matwujie@math.nus.edu.sg}
\urladdr{http://www.math.nus.edu.sg/\~{}matwujie}

\author[J. Wu]{J. Wu$^{\dag}$}

\begin{abstract}
Let $X$ be a co-$H$-space of $(p-1)$-cell complex with all cells in even dimensions. Then the loop space $\Omega X$ admits a retract $\bar A^{\min}(X)$ that is the evaluation of the functor $\bar A^{\min}$ on $X$. In this paper, we determine the homology $H_*(\bar A^{\min}(X))$ and give the $\EHP$ sequence for the spaces $\bar A^{\min}(X)$.
\end{abstract}

\thanks{$^{\dag}$ Research is supported in part by the Academic Research Fund of the
National University of Singapore R-146-000-101-112.}

%\thanks{The author would like to thank the hospitality of
%Peking University. Part of this project has been carried out during
%his visit to the Institute of Mathematics of Peking University from
%June 19 to July 3, 2006.}
\subjclass[2000]{Primary 55P35; Secondary 55Q25, 55P65, 16W30}

\keywords{$\EHP$ sequences, functor $A^{\min}$, co-$H$-spaces, Hopf invariants}

\title{The Functor $A^{\min}$ for $(p-1)$-cell Complexes and $\EHP$ Sequences}
\maketitle
%\tableofcontents
\section{Introduction}

The algebraic functor $A^{\min}$ was introduced in~\cite{SW1}
arising from the question on the naturality of the classical
Poincar\'e-Birkhoff-Witt isomorphism. For any \textit{ungraded}
module $V$, $A^{\min}(V)$ is defined to be the \textit{smallest
functorial coalgebra retract} of $T(V)$ containing $V$. Then the
functor $A^{\min}$ extends canonically to the cases when $V$ is any
graded module. (See~\cite{SW1} for details.) The functor
$A^{\min}$ admits the tensor-length decomposition with
$$
A^{\min}(V)=\bigoplus_{n=0}^\infty A^{\min}_n(V),
$$
where $A^{\min}_n(V)=A^{\min}(V)\cap T_n(V)$ is the homogenous
component of $A^{\min}(V)$. By the Functorial
Poincar\'e-Birkhoff-Witt Theorem~\cite[Theorem 6.5]{SW1}, there
exists a functor $B^{\max}$ from (graded) modules to Hopf algebras
with the functorial coalgebra decomposition
$$
T(V)\cong A^{\min}(V)\otimes B^{\max}(V)
$$
for any graded module $V$. The determination of $A^{\min}(V)$ for
general $V$ is equivalent to an open problem in the modular
representation theory of the symmetric groups according
to~\cite[Theorem 7.4]{SW1}, which seems beyond the reach of current
techniques. Some properties of the algebraic functor $A^{\min}$ have been studied in~\cite{GW, Wu2} with applications in homotopy theory~\cite{GW2,Wu}. In this article we determine $A^{\min}(V)$ in the special cases
when $V_{\even}=0$ and $\dim V=p-1$.

Denote by $L(V)$ the free Lie algebra generated by $V$. Write $L_n(V)$ for the $n\,$th homogeneous
component of $L(V)$. Observe that $[L_s(V),L_t(V)]$ is a submodule
of $L_{s+t}(V)$ under the Lie bracket of $L(V)$. Let
$$
\bar L_n(V)=L_n(V)/\sum_{i=2}^{n-2} [L_i(V),L_{n-i}(V)].
$$
Define $\bar L_n^k(V)$ recursively by $\bar L_n^1(V)=\bar L_n(V)$
and $\bar L_n^{k+1}(V)=\bar L_n(\bar L_n^k(V))$.

\begin{thm}\label{theorem1.1}
Let the ground field $\bfk$ be of characteristic $p>2$. Let $V$ be a
graded module such that $V_{\even}=0$ and $\dim V=p-1$.  Then there
is an isomorphism of coalgebras
$$
A^{\min}(V)\cong \bigotimes_{k=0}^\infty E(\bar L_p^k (V)),
$$
where $E(W)$ is the exterior algebra generated by $W$.
\end{thm}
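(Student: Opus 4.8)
The plan is to reduce Theorem~\ref{theorem1.1} to the single ``peeling'' isomorphism of coalgebras
\[
A^{\min}(V)\ \cong\ E(V)\otimes A^{\min}\bigl(\bar L_p(V)\bigr)\qquad(\ast)
\]
valid for every odd $V$ with $\dim V=p-1$, where on the right the tensor-length grading weights each element of $\bar L_p(V)\subseteq L_p(V)\subseteq T_p(V)$ by $p$. Two facts then make an induction run: $\bar L_p(V)$ is again concentrated in odd degrees (it is spanned by $p$-fold Lie monomials in odd elements and $p$ is odd), and $\dim\bar L_p(V)=p-1$, so the hypotheses of the theorem are inherited. Since $\bar L_p^k(V)$ is concentrated in tensor length $p^k$, iterating $(\ast)$ gives for every $N$ a coalgebra isomorphism $A^{\min}(V)\cong\bigl(\bigotimes_{k=0}^{N}E(\bar L_p^k(V))\bigr)\otimes A^{\min}\bigl(\bar L_p^{\,N+1}(V)\bigr)$; the last factor is concentrated in tensor lengths $\ge p^{N+1}$, so letting $N\to\infty$ identifies $A^{\min}(V)$ with $\bigotimes_{k\ge0}E(\bar L_p^k(V))$ in every tensor length, hence as a coalgebra. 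This last step is pure bookkeeping; the content is $(\ast)$ together with the dimension count.

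For the dimension count the plan is to show that $\bar L_p(-)$ is naturally the Frobenius twist: there is a natural epimorphism $L_p(V)\twoheadrightarrow\phi(V)$ onto the Frobenius twist whose kernel is exactly the submodule of proper decomposables $\sum_{i=2}^{p-2}[L_i(V),L_{p-i}(V)]$, whence $\bar L_p(V)\cong\phi(V)$, $\dim\bar L_p(V)=\dim V=p-1$, and $\bar L_p^k(V)\cong\phi^k(V)$. I would isolate the bare dimension statement as a lemma and prove it directly from the Witt/Poincar\'e-series identity $\tfrac1{1-dt}=\prod_{n\text{ odd}}(1+t^n)^{\ell_n}\prod_{n\text{ even}}(1-t^n)^{-\ell_n}$ with $d=p-1$ and $\ell_n=\dim L_n(V)$ (which comes from $T(V)\cong U(L(V))$ and PBW), together with an explicit count of $\dim\sum_{i=2}^{p-2}[L_i,L_{p-i}]$ inside $L_p(V)$; the arithmetic collapse that yields exactly $p-1$ is special to $d=p-1$.

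The heart of the matter is $(\ast)$, and the plan is to extract it from an $\EHP$ fibration. Using the realization of $\bar A^{\min}$ as a functor on co-$H$-spaces, pick a co-$H$-complex $X$ with $s^{-1}\tilde H_*(X;\bfk)\cong V$. Because $\dim V=p-1<p$, the group algebras $\bfk[\Sigma_n]$ are semisimple for $n<p$, the symmetrization maps $\Lambda^nV\to V^{\otimes n}$ split functorially for $n\le p-1$, and consequently $A^{\min}_n(V)=\Lambda^nV$ for $n\le p-1$; thus the finite coalgebra $E(V)=\bigoplus_{n\le p-1}\Lambda^nV$ (finite because $\Lambda^pV=0$) is the part of $A^{\min}(V)$ in tensor lengths $<p$, and it is realized by a finite complex $F$ (a product of spheres when $X$ is a wedge of even spheres). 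I would then construct a mod-$p$ James--Hopf map $H\colon\bar A^{\min}(X)\to\bar A^{\min}(X_p)$, where $X_p$ is a $(p-1)$-cell co-$H$-complex with $s^{-1}\tilde H_*(X_p)\cong\bar L_p(V)$, realizing the projection of $A^{\min}(V)$ onto its ``new'' functorial summand in tensor length $p$, and prove that the homotopy fibre of $H$ is $F$. The resulting $\EHP$ sequence $F\to\bar A^{\min}(X)\xrightarrow{H}\bar A^{\min}(X_p)$ then yields $(\ast)$ once one checks that its Serre spectral sequence collapses: $H_*(F)=E(V)$ sits in tensor lengths $<p$ while $H_*(\bar A^{\min}(X_p))=A^{\min}(\bar L_p(V))$ sits in tensor lengths divisible by $p$, which leaves no room for differentials once everything is filtered by tensor length.

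The step I expect to be the main obstacle is exactly the construction of $H$ and the identification of its fibre — equivalently, pinning down the minimal functorial coalgebra retract through tensor length $p$, the first length at which $\bfk[\Sigma_p]$ fails to be semisimple and the Dynkin operator degenerates ($\theta_p$ annihilates $L_p(V)$, since $\theta_p|_{L_p(V)}=p\cdot\id=0$ in characteristic $p$). One must show that the functorial summand of $T(V)$ forced to appear in tensor length $p$ — nonzero because no functorial coalgebra retract of $T(-)$ containing $V$ vanishes in tensor length $p$ — is exactly $\bar L_p(V)$, and that in lengths $>p$ the retract is governed by $B^{\max}(\bar L_p(V))$ and nothing larger. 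Once this critical-length analysis is secured, functoriality and the Functorial Poincar\'e--Birkhoff--Witt Theorem $T(W)\cong A^{\min}(W)\otimes B^{\max}(W)$ propagate the description to all higher lengths and deliver $(\ast)$; the rest of the argument is formal.
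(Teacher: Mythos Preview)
Your peeling framework $(\ast)$ and the observation that $\bar L_p(V)$ inherits the hypotheses are correct and match the paper's inductive structure. But the mechanism you propose for establishing $(\ast)$ is circular, and the actual hard content of the proof is missing.

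\textbf{Circularity.} You want to extract $(\ast)$ from a geometric $\EHP$ fibration $F\to\bar A^{\min}(X)\to\bar A^{\min}(X_p)$, identifying the fibre as a finite complex with $H_*(F)=E(V)$ and collapsing the Serre spectral sequence. But identifying the fibre (or, equivalently, showing that $H_{p*}$ is surjective on primitives) requires knowing the primitives of $H_*(\bar A^{\min}(X))\cong A^{\min}(V)$ in advance. In the paper the $\EHP$ fibre sequence is Theorem~1.5, and its proof explicitly invokes Theorem~1.1 to compute $PA^{\min}(V)=\bigoplus_k\bar L_p^k(V)$ and hence to show that $E^0H_{p*}$ is onto on primitives with kernel $V$. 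The geometry repackages the algebra; it does not bypass it.

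\textbf{The missing step.} You correctly flag the crux as ``pinning down the minimal functorial coalgebra retract through tensor length $p$,'' but you do not give an argument, and your claim that once this is secured ``the rest of the argument is formal'' via functorial PBW is not right: knowing $A^{\min}_p(V)$ does not by itself determine $A^{\min}_{p^k}(V)$ for $k\ge2$. The paper's proof works the other way around. It first produces an \emph{upper bound}: since $B(V)=\langle L_n(V):p\nmid n\rangle\subseteq B^{\max}(V)$, there is an epimorphism $\bfk\otimes_{B(V)}T(V)\twoheadrightarrow A^{\min}(V)$, and the left side is computed (Lemma~3.2, Proposition~3.3) to be $\bigotimes_k E(\bar L_p^k(V))$. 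The entire difficulty is the \emph{lower bound}: showing each $\bar L_p^k(V)$ actually survives in $A^{\min}(V)$. This is done by contradiction using a representation-theoretic obstruction (Lemma~3.6): if $\bar L_p^{k+1}(V)$ died, then, via a lifting through $\beta_p$ (Lemma~3.4, Proposition~\ref{projective}), the epimorphism $T_p(\bar V)\twoheadrightarrow\bar L_p(\bar V)\cong\bar V\otimes E_{p-1}(\bar V)$ would admit a $\bfk(\GL(\bar V))$-equivariant section, and an explicit computation with the Schur-algebra/centralizer identity shows no such section exists. That computation is the irreducible core of the theorem; nothing in your proposal substitutes for it.

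A minor point: your identification $\bar L_p(V)\cong\phi(V)$ as a bare Frobenius twist is not how the paper obtains $\dim\bar L_p(V)=p-1$; it comes instead from the short exact sequence $0\to QB^{[1]}(V)\to E(V)\otimes V\to IE(V)\to0$ specific to $\dim V=p-1$, yielding $\bar L_p(V)\cong E_{p-1}(V)\otimes V$.
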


An important observation is that
$$
\bar L_p^k(V)_{\even}=0 \textrm { and } \dim \bar L_p^k(V)=p-1
$$
for each $k\geq0$ provided that $V_{\even}=0$ and $\dim V=p-1$.
Replace $V$ by $\bar L_p(V)$ in the above theorem. Then $
A^{\min}(\bar L_p(V))\cong \bigotimes\limits_{k=1}^\infty E(\bar
L_p^k (V))$ with a coalgebra decomposition
$$
A^{\min}(V)\cong E(V)\otimes A^{\min}(\bar L_p(V)),
$$
which indicates the existence of the $\EHP$ fibrations by taking the
\textit{geometric realization} of the functor $A^{\min}$ and the
\textit{Hopf invariants} on the functor $A^{\min}$.

The geometric realization of the (algebraic) functor $A^{\min}$ was studied in~\cite{GSW,STW,SW1, SW2, Theriault} for giving the decompositions of the loop spaces of double suspensions of torsion spaces. The most general result so far is given as
follows:

\begin{thm}~\cite[Theorem 1.2]{STW2}\label{theorem1.2}
There exist homotopy functors $\bar A^{\min}$ and $\bar B^{\max}$
from simply connected co-$H$ spaces of finite type to spaces such
that for any $p$-local simply connected co-$H$ space $Y$ the
following hold:
\begin{enumerate}
\item[1)] $\bar A^{\min}(Y)$ is a functorial retract of $\Omega Y$
and so there is a functorial decomposition
$$
\Omega Y\simeq \bar A^{\min}(Y)\times \bar B^{\max}(Y).
$$
\item[2)] On mod $p$ homology the decomposition
$$ H_*(\Omega Y)\cong H_*(\bar A^{\min}(Y))\otimes H_*(\bar B^{\max}(Y))$$ is with
respect to the augmentation ideal filtration.
\item[3)]On mod $p$ homology the associated bigraded $E^0H_*(\bar
A^{\min}(Y))$ is given by
$$
E^0H_*(\bar A^{\min}(Y))=A^{\min}(\Sigma^{-1}\bar H_*(Y)).
$$
\end{enumerate}
\end{thm}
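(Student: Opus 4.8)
\emph{Strategy.} The plan is to realize the algebraic functor $A^{\min}$ by topology, through a ``telescope of a self-map'' construction in the spirit of the geometric realizations of \cite{SW1, SW2, Theriault}. First reduce to suspensions. A simply connected finite-type co-$H$ space $Y$ is a functorial retract of $\Sigma\Omega Y$: the co-$H$ comultiplication produces a section of the evaluation $\Sigma\Omega Y\to Y$, and co-$H$ maps respect it up to homotopy (Ganea). Hence $\Omega Y$ is a retract of $\Omega\Sigma\Omega Y$, $\bar H_*(Y;\fp)$ is a summand of $\Sigma\bar H_*(\Omega Y;\fp)$ so that $\Sigma^{-1}\bar H_*(Y)$ is defined with $H_*(\Omega Y;\fp)\cong T(\Sigma^{-1}\bar H_*(Y;\fp))$, and it suffices to build $\bar A^{\min}$, $\bar B^{\max}$ naturally on spaces $\Sigma X$ and then transport along the retraction $\Omega Y\hookrightarrow\Omega\Sigma\Omega Y\to\Omega Y$, using the naturality of the $A^{\min}$-construction to see that the transported self-maps remain homotopy idempotent.

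\emph{Realizing the $A^{\min}$-idempotent.} On $\Omega\Sigma X$ the Bott--Samelson theorem identifies $H_*(\Omega\Sigma X;\fp)$ with the tensor algebra $T(\bar H_*(X;\fp))$, with the augmentation-ideal filtration equal to the tensor-length filtration and realized geometrically by the James--Hopf maps $\Omega\Sigma X\to\Omega\Sigma(X^{\wedge n})$. The Functorial Poincar\'e--Birkhoff--Witt theorem \cite[Theorem 6.5]{SW1} supplies a natural coalgebra idempotent $\pi_V\colon T(V)\to T(V)$ with image $A^{\min}(V)$, built from iterated comultiplications and loop-type multiplications and stable under the tensor-length filtration. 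The central step is to realize $\pi$ by a self-map $f\colon\Omega\Sigma X\to\Omega\Sigma X$, natural in $X$ and well defined $p$-locally, with $f_*=\pi$ on the associated graded $E^0H_*(\Omega\Sigma X;\fp)=T(\bar H_*X)$: comultiplications are realized by James--Hopf maps, multiplications by the loop multiplication, and the limit defining $\pi$ by a convergence argument over the skeletal/Postnikov filtration. One then verifies $f\circ f\simeq f$.

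\emph{Splitting off the telescope.} Define $\bar A^{\min}(\Sigma X)$ to be the mapping telescope of $\Omega\Sigma X\xrightarrow{f}\Omega\Sigma X\xrightarrow{f}\cdots$. Since $\fp$-homology commutes with this homotopy colimit and $f$ realizes $\pi$ on $E^0H_*$, we obtain $E^0H_*(\bar A^{\min}(\Sigma X);\fp)=\Image(\pi)=A^{\min}(\bar H_*(X;\fp))=A^{\min}(\Sigma^{-1}\bar H_*(\Sigma X))$, giving 3), and homotopy idempotence of $f$ makes $\bar A^{\min}(\Sigma X)$ a functorial retract of $\Omega\Sigma X$. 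Let $\bar B^{\max}(\Sigma X)$ be the homotopy fiber of the retraction $\Omega\Sigma X\to\bar A^{\min}(\Sigma X)$; since $\Omega\Sigma X$ is a loop space, the product map gives an equivalence $\bar A^{\min}(\Sigma X)\times\bar B^{\max}(\Sigma X)\xrightarrow{\simeq}\Omega\Sigma X$, which is 1), and on $\fp$-homology this realizes, on associated graded for the augmentation filtration, the coalgebra decomposition $T\cong A^{\min}\otimes B^{\max}$ of \cite[Theorem 6.5]{SW1}, which is 2). Transporting along $\Omega Y\hookrightarrow\Omega\Sigma\Omega Y\to\Omega Y$ produces the functors on co-$H$ spaces.

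\emph{Main obstacle.} The crux is the realization step: producing a genuinely natural self-map $f$ (rather than one defined only up to homotopy), interpreting it correctly $p$-locally and as a homotopy functor, and controlling the discrepancy between the geometric filtration and the algebraic tensor-length filtration --- which is precisely why assertions 2) and 3) are phrased only at the level of the associated graded $E^0H_*$. A secondary technical point is rigidifying the Ganea section $Y\to\Sigma\Omega Y$ and its compatibility with $f$, so that $\bar A^{\min}$ and $\bar B^{\max}$ are honest functors from co-$H$ spaces to spaces rather than functors up to homotopy.
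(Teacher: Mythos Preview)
This theorem is not proved in the present paper: it is quoted from \cite[Theorem~1.2]{STW2} and restated again, still without proof, as Theorems~\ref{theorem4.1} and~\ref{theorem4.2} in Section~\ref{section4}. So there is no ``paper's own proof'' to compare against here.

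That said, your outline is a reasonable summary of the strategy pursued in the cited sources \cite{SW1,SW2,STW,Theriault,STW2}: realize the algebraic idempotent cutting out $A^{\min}$ by a natural self-map of $\Omega Y$ and take the mapping telescope. Two points worth sharpening. First, you do not need (and in general will not have) $f\circ f\simeq f$; it suffices that $f_*$ preserve the augmentation-ideal filtration and that $E^0f_*$ be idempotent, exactly the hypotheses of Lemma~\ref{lemma4.4} in this paper, and then the telescope is a retract because the induced self-map of the telescope is a mod~$p$ homology isomorphism. Second, the heart of \cite{STW2} is not an ad hoc assembly of the idempotent from James--Hopf maps and loop multiplication, but rather an identification of the monoid of natural coalgebra self-transformations of $T$ with a geometric monoid of natural self-maps of the functor $\Omega$ on $\CoH$ (cf.\ the group $[\Omega,\Omega]_{\CoH}$ mentioned at the start of Section~\ref{section4}); this is what yields genuine functoriality on co-$H$ spaces directly, and it makes your proposed detour through $\Sigma\Omega Y$ unnecessary.
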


Let $X$ be a path-connected finite complex. Define $
b_X=\sum\limits_{q=1}^\infty q\dim \bar H_q(X;\Z/p).$ Roughly
speaking, $b_X$ is the summation of the dimensions of the cells in
$X$. A direct consequence of Theorems~\ref{theorem1.1} and
~\ref{theorem1.2} is the following homological information:

\begin{cor}\label{corollary1.3}
Let $p>2$ and let $Y$ be any $p$-local simply connected co-$H$
space. Suppose that $\bar H_{\odd}(Y)=0$ and $\dim \bar H_*(Y)=p-1$.
Then there is an isomorphism of coalgebras
$$
H_*(\bar A^{\min}(Y))\cong E(\Sigma^{-1}\bar H_*(Y))\otimes
\bigotimes_{k=1}^\infty E(\Sigma^{\frac{p^k-1}{p-1}b_Y-p^k}\bar
H_*(Y)),
$$
where $\Sigma^{\frac{p^k-1}{p-1}b_Y-p^k}\bar H_*(Y)=\bar
L_p^k(\Sigma^{-1}\bar H_*(Y))$.\hfill $\Box$
\end{cor}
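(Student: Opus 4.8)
The plan is to feed the algebraic computation of $A^{\min}(V)$ in Theorem~\ref{theorem1.1} into the geometric realization of Theorem~\ref{theorem1.2}, and then carry out a purely combinatorial degree computation.

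First I would set $V=\Sigma^{-1}\bar H_*(Y;\Z/p)$. Since $\bar H_{\odd}(Y)=0$ the module $V$ is concentrated in odd degrees, so $V_{\even}=0$; and $\dim V=\dim\bar H_*(Y)=p-1$. Hence Theorem~\ref{theorem1.1} applies and gives a coalgebra isomorphism
$$
A^{\min}(V)\cong\bigotimes_{k\ge 0}E(\bar L_p^k(V)),
$$
with $\bar L_p^0(V)=V$, so that the $k=0$ factor is $E(\Sigma^{-1}\bar H_*(Y))$; recall also, from the observation following Theorem~\ref{theorem1.1}, that each $\bar L_p^k(V)$ again satisfies $\bar L_p^k(V)_{\even}=0$ and $\dim\bar L_p^k(V)=p-1$. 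By Theorem~\ref{theorem1.2}(3) the associated bigraded of $H_*(\bar A^{\min}(Y))$ with respect to the augmentation ideal filtration is precisely $A^{\min}(V)$, hence is the primitively generated exterior coalgebra $\bigotimes_{k\ge 0}E(\bar L_p^k(V))$, all of whose generators sit in \emph{odd} degrees.

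Next I would upgrade this to an honest coalgebra isomorphism $H_*(\bar A^{\min}(Y))\cong\bigotimes_{k\ge 0}E(\bar L_p^k(V))$. The augmentation ideal filtration on $H_*(\bar A^{\min}(Y))$ is multiplicative and complete of finite type; for each homogeneous basis element of $\bar L_p^k(V)\subseteq T_{p^k}(V)$ I would choose a representing class of filtration degree $p^k$ and, by a standard successive-approximation argument along the filtration, arrange it to be primitive (possible because its reduced diagonal already vanishes modulo deeper filtration and the filtration is complete). These classes lie in odd topological degrees, so their squares vanish since $p>2$; the resulting map from the exterior coalgebra on them to $H_*(\bar A^{\min}(Y))$ is therefore a map of coalgebras which is the identity on associated bigraded objects, hence an isomorphism. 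I expect this lifting step to be the main obstacle of the whole argument: one must rule out divided-power or truncated-polynomial contributions, and what rules them out is precisely that every generator lies in an odd degree.

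Finally I would do the degree bookkeeping. Write a homogeneous basis of $\bar H_*(Y)$ in degrees $2n_1,\dots,2n_{p-1}$, so that $b_Y=\sum_i 2n_i$ and $V$ has a homogeneous basis in degrees $2n_i-1$. For any graded module $W$ with $W_{\even}=0$, $\dim W=p-1$, and homogeneous basis in degrees $f_1,\dots,f_{p-1}$, I claim $\bar L_p(W)$ has a homogeneous basis in degrees $\bigl(\textstyle\sum_j f_j\bigr)+f_i$, $i=1,\dots,p-1$. Indeed $\bar L_p$ is a homogeneous polynomial functor of degree $p$, so the multidegrees occurring in $\bar L_p(W)\subseteq W^{\otimes p}$ form a multiset stable under the Weyl group $\Sigma_{p-1}$; among the $\Sigma_{p-1}$-orbits of multidegrees of total weight $p$ only those of $(p,0,\dots,0)$ and of $(2,1,\dots,1)$ have size at most $p-1=\dim\bar L_p(W)$, and the $(p,0,\dots,0)$-component vanishes (it lies in $L_p$ of a one-dimensional module, which is $0$ for $p>2$), so the occurring multidegrees are exactly the $\Sigma_{p-1}$-orbit of $(2,1,\dots,1)$ with multiplicity one each, which gives the asserted degrees. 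Feeding this into $\bar L_p^{k+1}(V)=\bar L_p(\bar L_p^k(V))$, an induction shows that if $\bar L_p^k(V)$ has basis in degrees $d_k+2n_i$ then $\bar L_p^{k+1}(V)$ has basis in degrees $d_{k+1}+2n_i$ with $d_{k+1}=p\,d_k+b_Y$; and $d_0=-1$ since $\bar L_p^0(V)=V=\Sigma^{-1}\bar H_*(Y)$. Solving the recursion gives $d_k=\frac{p^k-1}{p-1}b_Y-p^k$, i.e. $\bar L_p^k(V)\cong\Sigma^{d_k}\bar H_*(Y)$. Substituting into the coalgebra isomorphism of the previous paragraph yields the stated formula.
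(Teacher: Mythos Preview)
Your proof is correct and follows the paper's approach: the paper states this corollary as an immediate consequence of Theorems~\ref{theorem1.1} and~\ref{theorem1.2} and supplies no further argument. You correctly fill in the two points the paper glosses over.

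For the passage from $E^0H_*(\bar A^{\min}(Y))$ to $H_*(\bar A^{\min}(Y))$ itself, your odd-degree argument is right, though it is slightly cleaner to run it on the dual side: $H^*(\bar A^{\min}(Y))$ carries the associative, graded-commutative cup product, and since its associated graded is the exterior algebra on odd-degree classes, any choice of lifts of generators squares to zero and defines an algebra map $E(W^*)\to H^*(\bar A^{\min}(Y))$ that is the identity on $E^0$. This sidesteps both the primitive-lifting step and any concern about non-associativity of the $H$-space multiplication on $H_*(\bar A^{\min}(Y))$.

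For the degree identification $\bar L_p^k(V)\cong\Sigma^{d_k}\bar H_*(Y)$, the paper's route is Lemma~\ref{lemma3.2}: from the short exact sequence $B^{[k+1]}(V)\hookrightarrow B^{[k]}(V)\twoheadrightarrow E(\bar L_p^k(V))$ and Lemma~\ref{lemma3.1} one reads off $\bar L_p^{k+1}(V)\cong E_{p-1}(\bar L_p^k(V))\otimes\bar L_p^k(V)$, and since the top exterior power is one-dimensional this gives the same recursion $d_{k+1}=p\,d_k+b_Y$ that you obtain. Your weight-space/orbit-counting argument is an independent and valid combinatorial alternative.
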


Recall that the classical Hopf invariants can be obtained from the
suspension splitting of the loop suspensions. For the geometric
functor $\bar A^{\min}$, the following suspension splitting
theorem is a special case of Theorem~\ref{theorem4.8}.

\begin{thm}\label{theorem1.4}
Let $Y$ be any $p$-local simply connected co-$H$ space. Then there
is a suspension splitting
$$
\Sigma \bar A^{\min}(Y)\simeq \bigvee_{n=1}^\infty \bar
A^{\min}_n(Y)
$$
such that
$$
\Sigma^{-1}\bar H_*(\bar A^{\min}_n(Y))\cong
A^{\min}_n(\Sigma^{-1}\bar H_*(Y))
$$
for each $n\geq 1$.\hfill $\Box$
\end{thm}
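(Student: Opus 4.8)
The plan is to deduce Theorem~\ref{theorem1.4} as the stated special case of the more general suspension splitting Theorem~\ref{theorem4.8}, so the real content is to set up the geometric functors $\bar A^{\min}_n$ and to show that $\bar A^{\min}(Y)$ suspension-splits according to tensor length, with the homology of each wedge summand matching the algebraic functor $A^{\min}_n$ applied to $\Sigma^{-1}\bar H_*(Y)$. First I would recall, from the construction of the homotopy functor $\bar A^{\min}$ underlying Theorem~\ref{theorem1.2}, that $\bar A^{\min}(Y)$ is built as a telescope (or mapping telescope of idempotents) of James-type constructions on $Y$, so that its suspension inherits the James splitting $\Sigma\Omega\Sigma Z\simeq\bigvee_{n\ge1}\Sigma Z^{\wedge n}$ at the level of the ambient loop space. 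The idempotent defining $\bar A^{\min}(Y)$ as a retract of $\Omega Y$ is functorial and, being induced by the natural coalgebra idempotent of $A^{\min}$ on $T(\Sigma^{-1}\bar H_*(Y))$, respects the tensor-length grading; hence it passes to each James filtration quotient, yielding wedge summands $\bar A^{\min}_n(Y)$ with $\Sigma\bar A^{\min}(Y)\simeq\bigvee_n\bar A^{\min}_n(Y)$.

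Next I would identify the homology. By part (2) and part (3) of Theorem~\ref{theorem1.2}, on mod $p$ homology the augmentation-ideal filtration on $H_*(\bar A^{\min}(Y))$ has associated graded $A^{\min}(\Sigma^{-1}\bar H_*(Y))$, which by definition carries the tensor-length decomposition $A^{\min}(V)=\bigoplus_n A^{\min}_n(V)$. The suspension splitting refines the filtration to an actual direct sum decomposition of $\Sigma^{-1}\bar H_*(\bar A^{\min}(Y))$ indexed by $n$; matching the $n$th summand of the splitting with the $n$th filtration quotient gives $\Sigma^{-1}\bar H_*(\bar A^{\min}_n(Y))\cong A^{\min}_n(\Sigma^{-1}\bar H_*(Y))$. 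Here one must check that the geometric filtration (James length) and the algebraic filtration (augmentation ideal) are compatible under the retraction, which follows because the retraction map $\Omega Y\to\bar A^{\min}(Y)$ and its section are filtration-preserving by naturality of the construction.

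I would then explicitly invoke Theorem~\ref{theorem4.8} for the general statement and note that Theorem~\ref{theorem1.4} is the $Y$-local, simply connected co-$H$ specialization; no further argument is needed beyond citing it, since the hypotheses of Theorem~\ref{theorem1.4} are exactly those of Theorem~\ref{theorem4.8} restricted to a single space rather than a diagram. The main obstacle, and the reason the proof is deferred to Section~4, is establishing that the functorial retract $\bar A^{\min}(Y)\subset\Omega Y$ can be chosen so that its suspension is \emph{compatibly} split off the James splitting of $\Sigma\Omega Y$ — i.e.\ that the geometric realization of the algebraic idempotent can be constructed wedge-summand by wedge-summand and that these choices are natural in $Y$. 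This requires the machinery of~\cite{STW2} (functorial coalgebra retracts realized geometrically) together with a careful comparison of the James filtration with the tensor-length grading; once that compatibility is in hand, both the splitting and the homology identification are formal consequences of Theorem~\ref{theorem1.2}.
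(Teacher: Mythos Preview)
Your proposal is correct and matches the paper's approach exactly: Theorem~\ref{theorem1.4} is simply the ``in particular'' clause of Theorem~\ref{theorem4.8} (itself quoted from~\cite{GTW}) specialized to the functor $A=A^{\min}$, and the paper offers no argument beyond this citation. Your sketch of the mechanism behind Theorem~\ref{theorem4.8}---realizing the coalgebra idempotent geometrically, using the James splitting to make $\Sigma\Omega Y$ a graded co-$H$ space, and then applying the telescope criterion of Lemma~\ref{lemma4.4} to split the retract wedge-summand by wedge-summand with the homology identified via Theorem~\ref{theorem1.2}---is exactly what Section~\ref{section4} records (again by citation), so nothing further is needed.
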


Note that each $\bar A^{\min}_n(Y)$ is a co-$H$ space because it is
a retract of $\Sigma \bar A^{\min}(Y)$. From the above theorem, one
gets the Hopf invariant $H_n$ defined as the composite
$$
\bar A^{\min}(Y)\rInto \Omega\Sigma \bar A^{\min}(Y)\simeq \Omega
\bigvee_{n=1}^\infty \bar A^{\min}_n(Y)\rTo^{\proj} \Omega \tilde
A^{\min}_n(Y)\rTo^r \bar A^{\min}(\bar A^{\min}_n(Y)),
$$
where $r\colon \Omega Z\to \bar A^{\min}(Z)$ is the functorial
retraction for any simply connected co-$H$-space $Z$. In particular,
there is a Hopf invariant $$ H_p\colon \bar A^{\min}(Y)\to \bar
A^{\min}(\bar A^{\min}_p(Y))$$ for any simply connected co-$H$ space
$Y$. By Lemma~\ref{lemma5.3},
$$\bar A^{\min}_p(Y)\simeq \Sigma^{b_Y-p+1}Y$$ provided that $\bar
H_{\odd}(Y)=0$ and
 $\dim \bar H_*(Y)=p-1$.
\begin{thm}[$\EHP$ Fibration]\label{theorem1.5}
Let $p>2$ and let $Y$ be any $p$-local simply connected co-$H$
space. Suppose that $\bar H_{\odd}(Y)=0$ and
 $\dim \bar H_*(Y)=p-1$.
Then there is a fibre sequence
$$
\Omega \bar A^{\min}(\Sigma^{b_Y-p+1}Y)\rTo^P \bar E(Y)\rTo^E \bar
A^{\min}(Y)\rTo^{H_p}\bar A^{\min}(\Sigma^{b_Y-p+1}Y).
$$
with the following properties:
\begin{enumerate}
\item[1)] On mod $p$ homology $H_*(\bar E(Y))\cong E(\Sigma^{-1}\bar H_*(Y))$
as coalgebras.
\item[2)] $H_*(\bar A^{\min}(Y))\cong H_*(\bar
E(Y))\otimes H_*(\bar A^{\min}(\Sigma^{b_Y-p+1}Y))$ as coalgebras.
\item[3)] If $f\colon S^n\to Y$ is a co-$H$ map such that $f_*\not=0\colon
\bar H_*(S^n)\to \bar H_*(Y)$. Then there is a commutative diagram
of fibre sequences
\begin{diagram}
\Omega \bar A^{\min}(\Sigma^{b_Y-p+1}Y)&\rTo^P& \bar E(Y)&\rTo^E&
\bar A^{\min}(Y)&\rTo^{H_p}&\bar A^{\min}(\Sigma^{b_Y-p+1}Y)\\
\uEq&& \uTo &&\uTo&&\uEq\\
\Omega \bar A^{\min}(\Sigma^{b_Y-p+1}Y)&\rTo^{P_f}& S^{n-1}&\rTo&
B_f& \rTo& \bar A^{\min}(\Sigma^{b_Y-p+1}Y).\\
\end{diagram}
In particular, the map $P\colon \Omega \bar
A^{\min}(\Sigma^{b_Y-p+1}Y)\to \bar E(Y)$ factors through the bottom
cell of $\bar E(Y)$.
\end{enumerate}
\end{thm}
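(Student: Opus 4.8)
The plan is to build the $\EHP$ fibration by factoring the Hopf invariant $H_p$ through a geometrically meaningful fibration and identifying the fibre with the exterior-algebra piece predicted by Corollary~\ref{corollary1.3}. First I would observe that, by Lemma~\ref{lemma5.3} (cited in the excerpt), $\bar A^{\min}_p(Y)\simeq\Sigma^{b_Y-p+1}Y$ whenever $\bar H_{\odd}(Y)=0$ and $\dim\bar H_*(Y)=p-1$, so the Hopf invariant $H_p\colon\bar A^{\min}(Y)\to\bar A^{\min}(\bar A^{\min}_p(Y))$ of the Introduction becomes a map $H_p\colon\bar A^{\min}(Y)\to\bar A^{\min}(\Sigma^{b_Y-p+1}Y)$. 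Define $\bar E(Y)$ to be the homotopy fibre of $H_p$ and $E$ the fibre inclusion; the map $P\colon\Omega\bar A^{\min}(\Sigma^{b_Y-p+1}Y)\to\bar E(Y)$ is then the connecting map of this fibration, so the four-term sequence in the statement is formal once $\bar E(Y)$ is defined this way. The content is therefore in parts (1), (2), and (3).

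For part (1), the strategy is to compute $H_*(\bar E(Y))$ via the (mod $p$) homology Serre spectral sequence of the fibration $\bar E(Y)\to\bar A^{\min}(Y)\xrightarrow{H_p}\bar A^{\min}(\Sigma^{b_Y-p+1}Y)$. By Theorem~\ref{theorem1.2}(3) together with Theorem~\ref{theorem1.1}, we know $H_*(\bar A^{\min}(Y))$ and $H_*(\bar A^{\min}(\Sigma^{b_Y-p+1}Y))$ explicitly as the tensor products $\bigotimes_{k\ge 0}E(\bar L_p^k(\Sigma^{-1}\bar H_*Y))$ and $\bigotimes_{k\ge 1}E(\bar L_p^k(\Sigma^{-1}\bar H_*Y))$ respectively (using that $\Sigma^{-1}\bar H_*(\Sigma^{b_Y-p+1}Y)=\bar L_p(\Sigma^{-1}\bar H_*Y)$ by the shift computation in Corollary~\ref{corollary1.3}). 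The key algebraic input is that $H_p$ is, on associated graded homology, the projection $A^{\min}(V)\to A^{\min}(\bar L_p(V))$ coming from the coalgebra splitting $A^{\min}(V)\cong E(V)\otimes A^{\min}(\bar L_p(V))$ displayed after Theorem~\ref{theorem1.1}; this should follow from the construction of the Hopf invariants together with the homological naturality in Theorem~\ref{theorem1.4}. Granting that, the spectral sequence collapses for dimension/algebra reasons and yields $H_*(\bar E(Y))\cong E(\Sigma^{-1}\bar H_*(Y))$ as coalgebras, and simultaneously gives (2), the multiplicativity $H_*(\bar A^{\min}(Y))\cong H_*(\bar E(Y))\otimes H_*(\bar A^{\min}(\Sigma^{b_Y-p+1}Y))$, again matching the coalgebra splitting of $A^{\min}$.

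For part (3): given a co-$H$ map $f\colon S^n\to Y$ with $f_*\neq0$ on reduced homology (so $n$ is odd and, since cells are even-dimensional after desuspension, $n-1$ is the bottom cell dimension), I would push $f$ through the functor, getting $\bar A^{\min}(f)\colon\bar A^{\min}(S^n)=S^{n-1}\to\bar A^{\min}(Y)$ — using that $\bar A^{\min}$ of a single odd sphere is the sphere $S^{n-1}$ itself, the lowest-dimensional instance of Corollary~\ref{corollary1.3}. Composing with $H_p$ and using that $H_p$ kills the bottom cell (since $S^{n-1}$ lands in the $E(\Sigma^{-1}\bar H_*Y)$ factor, which is exactly the fibre), $H_p\circ\bar A^{\min}(f)$ is null, so $\bar A^{\min}(f)$ lifts to a map $S^{n-1}\to\bar E(Y)$; this is the bottom-cell inclusion, and naturality of the whole construction produces the commuting ladder of fibrations with the connecting maps $P=P_f$ identified. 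The factorization of $P$ through the bottom cell of $\bar E(Y)$ is then immediate from the right-hand square plus the identification of the bottom cell.

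The main obstacle I anticipate is the precise homotopy-theoretic identification in part (1)/(2): showing that the Serre spectral sequence of the fibration $\bar E(Y)\to\bar A^{\min}(Y)\to\bar A^{\min}(\Sigma^{b_Y-p+1}Y)$ actually collapses and that the resulting extension of coalgebras is the untwisted tensor product matching $A^{\min}(V)\cong E(V)\otimes A^{\min}(\bar L_p V)$. This requires knowing that $H_p$ realizes the algebraic projection not just on $E^0$ but compatibly enough to run the spectral sequence — i.e., controlling the augmentation-ideal filtration through the fibration — and that there are no hidden multiplicative or coalgebra extensions. I expect this to rest on the detailed properties of the geometric $\bar A^{\min}$ established in~\cite{STW2} (Theorem~\ref{theorem1.2}) together with the suspension splitting Theorem~\ref{theorem1.4}, and it is the step I would spend the most care on; the formal construction of the fibration and part (3) should then follow relatively cleanly by naturality.
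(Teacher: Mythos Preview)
Your setup for the fibration and your plan for parts (1) and (2) are broadly in line with the paper, and you correctly flag the collapse and extension questions as the crux. The paper's argument is more concrete than a bare Serre-spectral-sequence collapse: it builds an explicit coalgebra map
\[
\theta\colon H_*(\bar A^{\min}(Y))\rTo^{\psi} H_*(\bar A^{\min}(Y))^{\otimes 2}\rTo^{\bar\phi\otimes H_{p*}} H_*(\Omega Y)^{\ab}\otimes H_*(\bar A^{\min}(\Sigma^{b_Y-p+1}Y)),
\]
where $\bar\phi$ is induced by the abelianization of $H_*(\Omega Y)$, checks that $E^0\theta$ is an isomorphism on primitives (using that the algebraic $H_p$ restricts to a Lie-algebra map on the sub-Lie-algebra generated by $L_p(V)$), and then squeezes the Poincar\'e series of $H_*(\bar E(Y))$ between an Eilenberg--Moore upper bound and a Serre lower bound. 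Your description of $E^0H_{p*}$ as a coalgebra ``projection'' is not quite right---it is the algebraic James--Hopf map, and identifying its effect on the primitives $\bar L_p^k(V)$ genuinely uses the Lie-map property---but since you already singled this step out as the delicate one, this is a refinement rather than a correction.

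Part (3), however, has a genuine gap. Your plan is to apply $\bar A^{\min}$ to $f\colon S^n\to Y$ and invoke naturality of the EHP construction. But naturality in $Y$ applied to $f$ yields a ladder whose right-hand column is the map $\bar A^{\min}(\bar A^{\min}_p(S^n))\to\bar A^{\min}(\Sigma^{b_Y-p+1}Y)$; since $\dim\bar H_*(S^n)=1<p-1$ one has $A^{\min}_p(\Sigma^{-1}\bar H_*(S^n))=0$ and hence $\bar A^{\min}_p(S^n)\simeq *$, so that column is $*\to\bar A^{\min}(\Sigma^{b_Y-p+1}Y)$, not the identity the statement demands. No space $B_f$ and no factorization of $P$ through $S^{n-1}$ come out of this. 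The paper's route is different: it passes to the \emph{cofibre} $\bar Y$ of $f$, uses that $\dim\bar H_*(\bar Y)=p-2$ forces $H_*(\bar A^{\min}(\bar Y))\cong E(\Sigma^{-1}\bar H_*(\bar Y))$, and shows (via the abelianization comparison from part~(1)) that the composite $g\colon\bar E(Y)\to\bar A^{\min}(Y)\to\bar A^{\min}(\bar Y)$ is surjective on homology with homotopy fibre $S^{n-1}$. The diagram in (3) then drops out of the resulting tower of fibrations with rows the EHP sequence and columns the fibre sequences over $\bar A^{\min}(\bar Y)$. This cofibre trick is the missing idea in your sketch. (Incidentally, $n$ need not be the bottom-cell dimension of $Y$; the ``in particular'' clause is obtained by specializing to that case.)
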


The following theorem gives a general criterion when the $\EHP$
fibration splits off. A space $X$ is called to have a
\textit{retractile generating complex $C$} if $C$ is a retract of
$\Sigma X$ with a retraction $r\colon \Sigma X\to C$ such that the
mod $p$ cohomology $H^*(X)$ is generated by
$$
M=\im(\Sigma^{-1}r^*\colon \Sigma^{-1}\bar H^*(C)\to \bar H^*(X))
$$
and $M\cong QH^*(X)$, the set of indecomposables. Recall that a
space $X$ is called \textit{(stably) atomic} if any self (stable)
map of $X$ inducing isomorphism on the bottom homology is a (stable)
homotopy equivalence.

\begin{thm}\label{theorem1.6}
Let $p>2$ and let $Y$ be any $p$-local simply connected co-$H$ space
such that $Y$ is stably atomic. Suppose that
 $\bar H_{\odd}(Y)=0$ and $\dim \bar H_*(Y)=p-1$.
Then the following statements are equivalent to each other:
\begin{enumerate}
\item[1)] The $\EHP$ fibration
$$
\bar E(Y)\rTo^E \bar A^{\min}(Y)\rTo^{H_p}\bar
A^{\min}(\Sigma^{b_Y-p+1}Y)
$$
splits off.
\item[2)] $\bar E(Y)$ is an $H$-space.
\item[3)] There exists an $H$-space $X$ having $Y$ as a retractile
generating complex.
\item[4)] The map $P\colon \Omega \bar
A^{\min}(\Sigma^{b_Y-p+1}Y)\to \bar E(Y)$ is null homotopic.
\item[5)] The composite
$$
\Sigma^{b_Y-p-1}Y\rInto \Omega \bar
A^{\min}(\Sigma^{b_Y-p+1}Y)\rTo^P \bar E(Y)$$ is null homotopic.
\item[6)] There exists a map $g\colon \Sigma^{b_Y-p}Y\to \bar A^{\min}(Y)$ such that
$$
g_*\colon H_*(\Sigma^{b_Y-p}Y)\rTo H_*(\bar A^{\min}(Y))
$$
is a monomorphism on the bottom cells of $\Sigma^{b_Y-p}Y$.
\end{enumerate}
\end{thm}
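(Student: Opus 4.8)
The plan is to prove Theorem~\ref{theorem1.6} as a cycle of implications, using the $\EHP$ fibration from Theorem~\ref{theorem1.5} as the backbone and the structural property in part~(3) of that theorem — that $P$ factors through the bottom cell of $\bar E(Y)$ — as the main technical lever. First I would establish the elementary equivalences among (1), (2), (4), and (5). For $(1)\Leftrightarrow(4)$: the fibration $\bar E(Y)\rTo^E \bar A^{\min}(Y)\rTo^{H_p}\bar A^{\min}(\Sigma^{b_Y-p+1}Y)$ with connecting map $P\colon \Omega\bar A^{\min}(\Sigma^{b_Y-p+1}Y)\to\bar E(Y)$ splits off precisely when $P$ is null, by the standard fibration-splitting criterion; $(4)\Rightarrow(5)$ is trivial (restrict to the bottom cell), and $(5)\Rightarrow(4)$ is exactly the content of Theorem~\ref{theorem1.5}(3), since $P$ factors through $S^{n-1}=\Sigma^{b_Y-p-1}Y$ up to the cell structure — more precisely, the bottom-cell factorization means that $P$ null on the bottom cell forces $P$ null. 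For $(1)\Leftrightarrow(2)$: if the fibration splits then $\bar A^{\min}(Y)\simeq \bar E(Y)\times\bar A^{\min}(\Sigma^{b_Y-p+1}Y)$ and $\bar E(Y)$ inherits an $H$-structure as a retract of a loop space; conversely, if $\bar E(Y)$ is an $H$-space then, using that $H_*(\bar A^{\min}(Y))\cong H_*(\bar E(Y))\otimes H_*(\bar A^{\min}(\Sigma^{b_Y-p+1}Y))$ as coalgebras (Theorem~\ref{theorem1.5}(2)) together with the homology computation of Corollary~\ref{corollary1.3}, one can build a map $\bar E(Y)\times\bar A^{\min}(\Sigma^{b_Y-p+1}Y)\to\bar A^{\min}(Y)$ realizing the homology iso and hence a splitting.

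Next I would handle the equivalence with (3), the existence of an $H$-space $X$ with $Y$ as a retractile generating complex. The implication $(3)\Rightarrow(2)$ should go by taking $X$ with retraction $r\colon\Sigma X\to\Sigma Y$ (or rather the appropriate suspension) and observing that the retractile hypothesis forces $X$ to sit compatibly in the $\bar A^{\min}$ tower so that $\bar E(Y)$ is a retract of $X$ — the condition $M\cong QH^*(X)$ with $M$ the image of the desuspended retraction is exactly what pins down $H^*(X)$ to match $H^*(\bar E(Y))\cong H^*(E(\Sigma^{-1}\bar H_*(Y)))$ — hence $\bar E(Y)$ is an $H$-space retract of the $H$-space $X$. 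For $(2)\Rightarrow(3)$ one simply takes $X=\bar E(Y)$ itself, noting that $Y$ is a retractile generating complex for $\bar E(Y)$: the bottom cell inclusion $\Sigma^{-1}\bar H_*(Y)\hookrightarrow H_*(\bar E(Y))=E(\Sigma^{-1}\bar H_*(Y))$ is precisely onto the primitives/indecomposables, and the suspension splitting of $\bar A^{\min}$ (Theorem~\ref{theorem1.4}) provides the required retraction $\Sigma\bar E(Y)\to\Sigma Y$ onto the bottom wedge summand.

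Finally I would connect (6) to the rest. The map $g\colon\Sigma^{b_Y-p}Y\to\bar A^{\min}(Y)$ whose homology is monic on the bottom cells of $\Sigma^{b_Y-p}Y$ is meant to be the obstruction to lifting through $\bar E(Y)$; the point is that $\Sigma^{b_Y-p-1}Y$ is the bottom cell of $\bar E(Y)$, so $(5)\Leftrightarrow(6)$ amounts to: the composite $\Sigma^{b_Y-p-1}Y\to\Omega\bar A^{\min}(\Sigma^{b_Y-p+1}Y)\rTo^P\bar E(Y)$ is null iff a map $g$ exists that hits those homology classes inside $\bar A^{\min}(Y)$ rather than being killed by $P$. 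One direction: if $P$ restricted to the bottom cell is null, the bottom cell of $\bar E(Y)$ maps to $\bar A^{\min}(Y)$ via $E$ and, after adjusting the suspension degree using $\Sigma\bar E(Y)\simeq\bigvee\bar E_n(Y)$, gives $g$; conversely such a $g$ detects the bottom homology of $\bar A^{\min}(\Sigma^{b_Y-p+1}Y)$-worth of classes and forces $H_p\circ g$, hence $P$, to behave trivially on the bottom cell — here I would use stable atomicity of $Y$ to upgrade the bottom-cell homology statement to a genuine map-level statement, since atomicity prevents the existence of "partial" self-maps that would otherwise let $g$ exist without $P$ being null. The hard part will be exactly this last point: controlling how the bottom-cell behavior of $g$ propagates through the $\EHP$ fibration to kill $P$ entirely, and it is precisely where the stable atomicity hypothesis on $Y$ is essential; everything else is diagram-chasing with the fibration and the homology decompositions already in hand.
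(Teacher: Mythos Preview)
Your cycle of implications has two genuine gaps, and both sit precisely where the paper does the real work.

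First, your $(5)\Rightarrow(4)$ is a misreading of Theorem~\ref{theorem1.5}(3). That part says $P$ factors through a bottom sphere $S^{n-1}$ of the \emph{target} $\bar E(Y)$, i.e.\ $P$ decomposes as $\Omega\bar A^{\min}(\Sigma^{b_Y-p+1}Y)\to S^{n-1}\hookrightarrow\bar E(Y)$. Statement~(5), by contrast, restricts the \emph{domain} of $P$ to $\Sigma^{b_Y-p-1}Y$. Knowing that the composite $\Sigma^{b_Y-p-1}Y\to\Omega\bar A^{\min}(\Sigma^{b_Y-p+1}Y)\to S^{n-1}\to\bar E(Y)$ is null does not force the map $S^{n-1}\to\bar E(Y)$ to be null, so $P$ need not be null. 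The paper never attempts $(5)\Rightarrow(4)$ directly; it routes $(5)\Rightarrow(6)\Rightarrow(3)\Rightarrow(1)\Rightarrow(4)$. Similarly, your $(2)\Rightarrow(1)$ by ``building a map realizing the homology iso'' presupposes a section of $H_p$, which is essentially the content of~(6) and is not available from~(2) alone; the paper instead goes $(2)\Rightarrow(3)\Rightarrow(1)$.

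Second, and more seriously, your treatment of the implications coming out of~(3) and~(6) misses the substantive constructions. For $(3)\Rightarrow(1)$ the paper does not argue that $\bar E(Y)$ is a retract of the given $H$-space $X$; rather it lifts $X\to\bar A^{\min}(Y)$ through $\bar E(Y)$ by connectivity, builds a filtered self-map $\phi$ of $\Omega Y$, and then uses a separate lemma that $\bar E(Y)$ is atomic (since $Y$ is) to conclude that the resulting retraction $\bar E(Y)\to\hocolim_\phi\Omega Y$ is an equivalence. For $(6)\Rightarrow(3)$ --- the hardest step --- the paper extends $g$ to a filtered self-map $\theta$ of $\Omega Y$, sets $Z=\hocolim_\theta\Omega Y$, uses stable atomicity of $Y$ to identify $[\Sigma Z]_p\simeq\Sigma^{b_Y-p+1}Y$, and then invokes a Zorn's-lemma construction of a \emph{minimal filtered retract of type~$A$}, $X^{\min}(Y)$, of $\Omega Y$; a Poincar\'e-series squeeze using the sub-Hopf-algebra $B^{[1]}(V)\subseteq H_*(\Omega Q)$ then forces $H_*(X^{\min}(Y))\cong E(V)$, exhibiting $X^{\min}(Y)$ as the desired $H$-space. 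None of this machinery --- atomicity of $\bar E(Y)$, the $X^{\min}$ construction, the Poincar\'e-series comparison --- appears in your sketch, and your description ``stable atomicity upgrades the bottom-cell homology statement to a map-level statement'' does not locate where or how atomicity is actually applied.
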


It is a classical question whether the total space of a spherical fibration over a sphere is an $H$-space localized at an odd prime $p$. For having a possible $H$-space structure, the base space and the fibre must be odd dimensional spheres. For $p>3$, it is well-known that the answer is positive~\cite{CHZ,CN}. For $p=3$, there are examples that do not admit $H$-space structure. The above theorem gives some properties for studying this question.

The article is organized as follows. In section~\ref{section2}, we
study the representation theory on natural coalgebra decompositions
of tensor algebras. The proof of Theorem~\ref{theorem1.1} is given
in Section~\ref{section3}. The geometry of natural coalgebra
decompositions of tensor algebras is investigated in
Section~\ref{section4}, where Theorem~\ref{theorem1.4} is
Theorem~\ref{theorem4.8}. In Section~\ref{section5}, we give the
proof of Theorem~\ref{theorem1.5}. The proof of
Theorem~\ref{theorem1.6} is given in Section~\ref{section6}.

\section{The Functor $A^{\min}$ and the symmetric group module $\Lie(n)$}\label{section2}

\subsection{The Functor $A^{\min}$ on Ungraded Modules}\label{subsection2.1}
In this subsection, the ground ring is a field $\bfk$. A coalgebra
means a pointed coassociative cocommutative coalgebra. For any
module $V$, the tensor algebra $T(V)$ is a Hopf algebra by requiring
$V$ to be primitive. This defines $T\colon V\mapsto T(V)$ as a
functor from \textit{ungraded} modules to \textit{coalgebras}. The
functor $A^{\min}\colon V\mapsto A^{\min}(V)$ is defined to be the
smallest coalgebra retract of the functor $T$ with the property that
$V\subseteq A^{\min}(V)$. More precisely the functor $A^{\min}$ is
defined by the following property:
\begin{enumerate}
\item $A^{\min}$ is a functor from $\bfk$-modules to coalgebras with
a natural linear inclusion $V\rInto A^{\min}(V)$.
\item $A^{\min}(V)$ is a natural coalgebra retract of $T(V)$
over $V$. Namely there exist natural coalgebra transformations
$s\colon A^{\min}\to T$ and $r\colon T\to A^{\min}$ such that the
diagram
\begin{diagram}
A^{\min}(V)&\rTo^{s_V}& T(V)&\rTo^{r_V}&A^{\min}(V)\\
\uInto&&\uInto&&\uInto\\
V&\rEq& V&\rEq& V\\
\end{diagram}
commutes for any $V$ and $r\circ s=\id_{A^{\min}}$.
\item $A^{\min}$ is \textit{minimal} with respect to the above two conditions:
if $A$ is any functor from $\bfk$-modules to coalgebras with natural
linear inclusion $V\rInto A(V)$ such that $A(V)$ is a natural
coalgebra retract of $T(V)$ over $V$, then $A^{\min}(V)$ is a
natural coalgebra retract of $A(V)$ over $V$.
\end{enumerate}
By the minimal assumption, the functor $A^{\min}$ is unique up to
natural equivalence if it exists. The existence of the functor
$A^{\min}$ follows from~\cite[Theorem 4.12]{SW1}.

There is a multiplication on $A^{\min}$ given by the composite
$$
A^{\min}\otimes A^{\min}\rInto^{s\otimes s} T\otimes T\rTo^{\mu}
T\rOnto^r A^{\min}
$$
and so $A^{\min}$ is a functor from modules to quasi-Hopf algebras,
where a quasi-Hopf algebra means a coassociative and cocommutative
bi-algebra without assuming the associativity of the multiplication.
The multiplication on $A^{\min}$ induces a new natural coalgebra
transformation $r'_V\colon T(V)\to A^{\min}(V)$ given by
$$
r'_V(x_1\otimes\cdots\otimes x_n)=(((\cdots(x_1\cdot x_2)\cdot
x_3)\cdots)\cdot x_n)
$$
for any $x_1\otimes\cdots \otimes x_n\in V^{\otimes n}$. By the
minimal assumption, the composite
$$
A^{\min}\rTo^s T\rTo^{r'} A^{\min}
$$
is a natural equivalence. Consider $T(V)$ as an
$A^{\min}(V)$-comodule via the map $r'_V$. According
to~\cite[Proposition 6.1]{SW1}, the cotensor product
$$
B^{\max}(V)=\bfk\square_{A^{\min}(V)}T(V)
$$
is natural sub Hopf algebra of $T(V)$ with a natural coalgebra
equivalence
\begin{equation}
\bfk\otimes_{B^{\max}(V)}T(V)\cong A^{\min}(V).
\end{equation}
Together with~\cite[Lemmas 5.3]{SW1}, there is a natural coalgebra
decomposition
\begin{equation}
T(V)\cong B^{\max}(V)\otimes A^{\min}(V).
\end{equation}

By taking tensor length decomposition, we have $B^{\max}(V)=\bigoplus_{n=0}^\infty B^{\max}_n(V)$ with $B^{\max}_n(V)=B^{\max}(V)\cap T_n(V)$. Let
\begin{equation}
Q^{\max}(V)=QB^{\max}(V)
\end{equation}
be the indecomposables of $B^{\max}(V)$ with tensor length decomposition $Q^{\max}(V)=\bigoplus_{n=2}^\infty Q^{\max}_n(V)$. (Note that $B^{\max}_1(V)=0$ and so $B^{\max}(V)$ has no nontrivial indecomposable elements of tensor length $1$.) According to~\cite[Section 2]{SW1}, all of the functors $A^{\min}$, $A^{\max}_n$, $B^{\max}$, $Q^{\max}$ and $Q^{\max}_n$ extend canonically for graded modules.

\subsection{The Symmetric Group Module $\Lie(n)$}\label{subsection2.2}
Let the ground ring $R$ be any commutative ring with identity. The
module $\Lie^R(n)$, which is simply denoted as $\Lie(n)$ if the
ground ring is clear, is defined as follows. Let $\bar V$ be a free
$R$-module of rank $n$ with a basis $\{e_i\ | \ 1\leq i\leq n\}$.
The module $\gamma_n$ is defined to be the submodule of $\bar
V^{\otimes n}$ spanned by
$$
e_{\sigma(1)}\otimes e_{\sigma(2)}\otimes\cdots \otimes
e_{\sigma(n)}
$$
for $\sigma\in \Sigma_n$. The module $\Lie(n)$ is defined by
$$
\Lie(n)=\gamma_n\cap L_n(\bar V)\subseteq
\bar V^{\otimes n}.
$$
The $\Sigma_n$-action on $\gamma_n$ is given by permuting letters
$e_1,e_2,\ldots,e_n$. Since both $\gamma_n$ and $L_n(\bar V)$ are
invariant under permutations of the letters, $\Lie(n)$ is an
$R(\Sigma_n)$-submodule of $\gamma_n$.

Note that $\Lie(n)$ is the submodule of $L_n(\bar V)$ spanned by the
homogenous Lie elements in which each $e_i$ occurs exactly once. By
the Witt formula, $\Lie(n)$ is a free $R$-module of rank $(n-1)!$.
Following from the antisymmetry and the Jacobi identity, $\Lie(n)$
has a basis given by the elements
$$
[[e_1,e_{\sigma(2)}],e_{\sigma(3)}],\ldots,e_{\sigma(n)}]
$$
for $\sigma\in \Sigma_{n-1}$. (See ~\cite{Cohen2}.) Observe that $\Lie(n)$ is the image
of the $R(\Sigma_n)$-map
$$
\beta_n\colon \gamma_n\to\gamma_n \quad \beta_n(a_1\otimes\cdots
\otimes a_n)=[[a_1,a_2],\ldots,a_n].
$$
Thus $\Lie(n)$ can be also regarded as the quotient
$R(\Sigma_n)$-module of $\gamma_n$ with the projection
$\beta_n\colon \gamma_n\to \Lie(n)$.

\begin{prop}
Let $V$ be any graded projective module and let $\Sigma_n$ act on
$V^{\otimes n}$ by permuting factors in graded sense. Then there is
a functorial isomorphism
$$
\Lie(n)\otimes_{R(\Sigma_n)}V^{\otimes n}\cong L_n(V)
$$
for any graded module $V$.
\end{prop}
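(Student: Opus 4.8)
The plan is to construct a natural transformation $\phi_V\colon\Lie(n)\otimes_{R(\Sigma_n)}V^{\otimes n}\to L_n(V)$, check that it is surjective for every $V$, and then reduce its injectivity first to the case of a free module and then to a computation in a single multidegree. For the map itself: given homogeneous $v_1,\dots,v_n\in V$, the rule $e_i\mapsto v_i$ extends (after decomposing $\bar V$ according to the degrees occurring among the $v_i$) to a linear map $\bar V\to V$, hence to $\bar V^{\otimes n}\to V^{\otimes n}$ and to a morphism of graded Lie algebras $L(\bar V)\to L(V)$; write $w(v_1,\dots,v_n)\in L_n(V)$ for the image of $w\in\Lie(n)\subseteq L_n(\bar V)$. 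The assignment $(w;v_1,\dots,v_n)\mapsto w(v_1,\dots,v_n)$ is multilinear in the $v_i$ and is balanced for the letter action on $\Lie(n)$ and the graded permutation action on $V^{\otimes n}$ (the graded signs agree because $L(\bar V)\to L(V)$ preserves degrees), so it descends to $\phi_V$, which is visibly natural in $V$. Surjectivity is immediate: by the antisymmetry and Jacobi identity recalled just before the statement, $L_n(V)$ is spanned by the left-normed brackets $[[v_1,v_2],\dots,v_n]=\phi_V\big([[e_1,e_2],\dots,e_n]\otimes(v_1\otimes\cdots\otimes v_n)\big)$.

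Next I would reduce injectivity to the case of a free module. Both $V\mapsto\Lie(n)\otimes_{R(\Sigma_n)}V^{\otimes n}$ and $V\mapsto L_n(V)$ preserve split injections and split surjections: this holds for $V\mapsto V^{\otimes n}$, hence for its subfunctors and quotient functors, and hence after applying $\Lie(n)\otimes_{R(\Sigma_n)}(-)$. Consequently, if $V$ is a retract of $F$ then $\phi_V$ is a retract of $\phi_F$ in the category of arrows, and a retract of an isomorphism is an isomorphism; since $V$ is projective it is a retract of a free module, so it suffices to treat $V$ free. This is exactly the point at which the projectivity hypothesis is used.

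For $V$ free on a homogeneous basis $x_1,\dots,x_k$, split $V^{\otimes n}=\bigoplus_c(V^{\otimes n})_c$ by content $c=(c_1,\dots,c_k)$ with $\sum c_i=n$; this induces compatible decompositions of both sides of $\phi_V$. As an $R(\Sigma_n)$-module, $(V^{\otimes n})_c\cong\mathrm{Ind}_{\Sigma_c}^{\Sigma_n}(\varepsilon_c)$, where $\Sigma_c=\Sigma_{c_1}\times\cdots\times\Sigma_{c_k}$ is the associated Young subgroup and $\varepsilon_c$ is the sign character produced by the odd-degree basis vectors; so by the change-of-rings isomorphism $\Lie(n)\otimes_{R(\Sigma_n)}(V^{\otimes n})_c\cong\big(\Lie(n)\otimes\varepsilon_c\big)_{\Sigma_c}$, the $\Sigma_c$-coinvariants of the $\varepsilon_c$-twist of $\Lie(n)$. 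Thus the proposition reduces to showing that the natural surjection $\big(\Lie(n)\otimes\varepsilon_c\big)_{\Sigma_c}\twoheadrightarrow L_n(V)_c$ is injective for every content $c$. When all $c_i\le1$ the group $\Sigma_c$ is trivial and this is the tautological identification of $\Lie(n)$ with the multilinear part of $L_n(V)$.

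The main obstacle is the case of a content $c$ with repetitions: one must identify the kernel of $\Lie(n)\twoheadrightarrow L_n(V)_c$ with the submodule generated by the elements $\sigma\cdot w-\varepsilon_c(\sigma)\,w$ for $\sigma\in\Sigma_c$, $w\in\Lie(n)$; the inclusion ``$\supseteq$'' is formal, and the reverse inclusion is where genuine work is required. The clean argument uses the Dynkin--Specht--Wever relation: the left-normed bracketing operator $\beta_n$ is $\Sigma_n$-equivariant and acts as multiplication by $n$ on $\Lie(n)\subseteq L_n(\bar V)$, so when $n$ is a unit in $R$ the operator $\tfrac1n\beta_n$ is a $\Sigma_n$-equivariant, hence $\Sigma_c$-equivariant, projection splitting $\gamma_n\cong\Lie(n)\oplus\ker\beta_n$, from which the claim follows at once; in the general case one instead argues directly with $\Lie(n)$ as an $R(\Sigma_n)$-module, following a basis of left-normed brackets through the quotient $\gamma_n\twoheadrightarrow(V^{\otimes n})_c$ and keeping track of the graded signs. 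Everything outside this per-content comparison is purely formal. (Alternatively, the statement can be extracted from the functorial Poincar\'e--Birkhoff--Witt theorem \cite[Theorem 6.5]{SW1} by comparing tensor-length components, at the cost of heavier machinery.)
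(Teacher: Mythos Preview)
Your construction of $\phi_V$ and the surjectivity argument are fine and match the paper. For injectivity, however, the paper takes a much shorter route that bypasses the content-by-content analysis entirely. It observes that
\[
L=\bigoplus_{n\ge 1}\Lie(n)\otimes_{R(\Sigma_n)}V^{\otimes n}
\]
carries a canonical graded Lie algebra structure (coming from the operadic composition on $\Lie$) and is generated by $V=L_1$, since $\Lie(n)$ is spanned by left-normed brackets in $e_1,\dots,e_n$. The universal property of the free Lie algebra then produces a surjection $\psi\colon L(V)\twoheadrightarrow L$ in the other direction; as $\phi\circ\psi$ is a Lie algebra endomorphism of $L(V)$ fixing $V$, it is the identity, and both maps are isomorphisms. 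This argument works uniformly for every $V$, never invokes projectivity, and never needs the Dynkin--Specht--Wever idempotent or any case split on whether $n$ is a unit.

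By contrast, your approach leaves a real gap precisely where you acknowledge one. For a content $c$ with repetitions and $n$ not invertible in $R$, you do not actually show that the kernel of the evaluation $\Lie(n)\twoheadrightarrow L_n(V)_c$ is generated by the elements $\sigma\cdot w-\varepsilon_c(\sigma)\,w$; the phrase ``follow a basis of left-normed brackets through the quotient and keep track of the signs'' is a plan, not a proof. The difficulty is genuine: it amounts to saying that the multilinear component already encodes all relations in $L_n$, which is essentially the proposition itself restated. The paper's two-surjections argument sidesteps this completely, and I would recommend you replace your injectivity step with it.
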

\begin{proof}
Clearly the quotient map $\beta_n\colon V^{\otimes
n}=\gamma_n\otimes_{R(\Sigma_n)}V^{\otimes n}\to L_n(V)$,
$x_1\otimes\cdots\otimes x_n\mapsto [[x_1,x_2],\ldots,x_n]$, factors
through the quotient $\Lie(n)\otimes_{R(\Sigma_n)}V^{\otimes n}$ and
so there is an epimorphism $\Lie(n)\otimes_{R(\Sigma_n)}V^{\otimes
n}\twoheadrightarrow L_n(V)$. On the other hand note that
$L=\bigoplus\limits_{n=1}^\infty
\Lie(n)\otimes_{R(\Sigma_n)}V^{\otimes n}$ has the canonical graded
Lie algebra structure generated by $V$. So the inclusion $V\rInto L$
induces an epimorphism of graded Lie algebras
$L(V)\twoheadrightarrow L$. The assertion follows.
\end{proof}

Consider $T_n\colon V\mapsto V^{\otimes n}$ as a functor from
projective (\textit{ungraded}) modules to projective
(\textit{ungraded}) modules. Denote by $\Hom(F,F')$ the set of
natural linear transformations from a functor $F$ to a functor $F'$
provided that $F$ preserves direct limits. By~\cite[Lemma 3.8]{GW},
$\Hom(T_n,T_m)=0$ if $n\not=m$ and there is an isomorphism of rings
\begin{equation}\label{theta}
\theta\colon \End_{R(\Sigma_n)}(\gamma_n)\rTo \End(T_n,T_n)
\end{equation}
given by
$$
\theta(\phi)=\phi\otimes_{\id_{V^{\otimes n}}}\colon
\gamma_n\otimes_{R(\Sigma_n)}V^{\otimes n}=V^{\otimes n}\rTo
\gamma_n\otimes_{R(\Sigma_n)}V^{\otimes n}=V^{\otimes n}
$$
for $\phi\in \End_{R(\Sigma_n)}(\gamma_n)$. Replacing $\gamma_n$
by $\Lie(n)$, we have the morphism of rings $ \theta\colon
\End_{R(\Sigma_n)}(\Lie(n))\rTo \End(L_n). $
\begin{prop}\label{proposition2.2}
If $n\not=m$, then $\Hom(L_n,L_m)=0$. Moreover the map $$
\theta\colon \End_{R(\Sigma_n)}(\Lie(n))\rTo \End(L_n)
$$
is an isomorphism.
\end{prop}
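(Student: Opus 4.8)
The plan is to reduce the statement to the facts about $T_n$ and $\gamma_n$ already recorded, using that $L_n$ is, functorially, a subquotient of $T_n$. Let $\bar\beta_n\colon T_n\to L_n$ be the natural transformation $x_1\otimes\cdots\otimes x_n\mapsto[[x_1,x_2],\ldots,x_n]$, and let $\iota_n\colon L_n\to T_n$ be the natural inclusion $L_n(V)=L(V)\cap T_n(V)\subseteq V^{\otimes n}$. Each $\bar\beta_{n,V}$ is surjective (by the Jacobi identity and antisymmetry every iterated bracket of length $n$ is an integral linear combination of left-normed ones) and each $\iota_{n,V}$ is injective. Under the isomorphism $L_n(V)\cong\Lie(n)\otimes_{R(\Sigma_n)}V^{\otimes n}$ of the preceding Proposition, $\bar\beta_n$ is identified with $\beta_n\otimes_{R(\Sigma_n)}\id$, where $\beta_n\colon\gamma_n\twoheadrightarrow\Lie(n)$ is the quotient map, and $\iota_n$ with $j\otimes_{R(\Sigma_n)}\id$, where $j\colon\Lie(n)\hookrightarrow\gamma_n$ is the inclusion realizing $\Lie(n)=\gamma_n\cap L_n(\bar V)$. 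Writing $\Theta\colon\End_{R(\Sigma_n)}(\gamma_n)\to\End(T_n)$ for the isomorphism $\theta$ of~(\ref{theta}) (to distinguish it from the map $\theta$ in the statement), functoriality of $-\otimes_{R(\Sigma_n)}V^{\otimes n}$ then gives the compatibility $\Theta(j\circ\phi\circ\beta_n)=\iota_n\circ\theta(\phi)\circ\bar\beta_n$ in $\End(T_n)$ for every $\phi\in\End_{R(\Sigma_n)}(\Lie(n))$.

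Granting this, $\Hom(L_n,L_m)=0$ for $n\neq m$ is immediate: for $f\in\Hom(L_n,L_m)$ the composite $\iota_m\circ f\circ\bar\beta_n$ lies in $\Hom(T_n,T_m)=0$, and since $\bar\beta_n$ is pointwise epi and $\iota_m$ is pointwise mono, $f=0$. For injectivity of $\theta$, observe that $\gamma_n$ is a direct summand of $\bar V^{\otimes n}$ as an $R(\Sigma_n)$-module, namely the summand spanned by the multilinear monomials; since $\gamma_n\cong R(\Sigma_n)$ is free of rank one, evaluating $\theta(\phi)=\phi\otimes_{R(\Sigma_n)}\id$ at $V=\bar V$ preserves this summand and restricts there to $\phi$ itself, so $\theta(\phi)=0$ forces $\phi=0$. (The same observation shows $\Theta^{-1}(g)$ is obtained by restricting $g_{\bar V}$ to the multilinear summand $\gamma_n\subseteq\bar V^{\otimes n}$.)

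For surjectivity, take $f\in\End(L_n)$, put $g=\iota_n\circ f\circ\bar\beta_n\in\End(T_n)$, and let $\psi=\Theta^{-1}(g)\in\End_{R(\Sigma_n)}(\gamma_n)$, so that $g_{\bar V}$ preserves $\gamma_n\subseteq\bar V^{\otimes n}$ and $\psi=g_{\bar V}|_{\gamma_n}$. Since $g$ factors through $\bar\beta_n$, the map $g_{\bar V}=\iota_{n,\bar V}\circ f_{\bar V}\circ\bar\beta_{n,\bar V}$ kills $\ker\bar\beta_{n,\bar V}$; intersecting with $\gamma_n$, $\psi$ kills $\ker(\beta_n\colon\gamma_n\to\Lie(n))$, hence $\psi=\tilde\psi\circ\beta_n$ for a unique $R(\Sigma_n)$-map $\tilde\psi\colon\Lie(n)\to\gamma_n$. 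Since $g$ factors through $\iota_n$, we have $\im g_{\bar V}\subseteq L_n(\bar V)$, hence $\im\psi\subseteq\gamma_n\cap L_n(\bar V)=\Lie(n)=\im j$, so $\tilde\psi=j\circ\phi$ for a unique $\phi\in\End_{R(\Sigma_n)}(\Lie(n))$. Therefore $\psi=j\circ\phi\circ\beta_n$, and by the compatibility of the first paragraph $g=\Theta(\psi)=\iota_n\circ\theta(\phi)\circ\bar\beta_n$; comparing with $g=\iota_n\circ f\circ\bar\beta_n$ and cancelling the pointwise mono $\iota_n$ and the pointwise epi $\bar\beta_n$ yields $\theta(\phi)=f$.

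The main obstacle is the bookkeeping concentrated in the first paragraph: checking that the honest natural transformations $\bar\beta_n$ and $\iota_n$ correspond to $\beta_n\otimes_{R(\Sigma_n)}\id$ and $j\otimes_{R(\Sigma_n)}\id$ under the preceding Proposition — so that $\theta$ on $\Lie(n)$ is genuinely compatible with $\Theta$ on $\gamma_n$ — together with the explicit description of $\Theta^{-1}$ as restriction to the multilinear summand $\gamma_n\subseteq\bar V^{\otimes n}$. In other words, one must keep the two incarnations of $\Lie(n)$ straight, as the quotient $\beta_n(\gamma_n)$ and as the submodule $\gamma_n\cap L_n(\bar V)$, and their compatibility with $\gamma_n\subseteq\bar V^{\otimes n}$ and with $L_n(\bar V)\subseteq\bar V^{\otimes n}$; once these identifications are in place the remaining argument is a routine epi--mono factorization in the category of $R(\Sigma_n)$-modules.
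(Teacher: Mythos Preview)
Your proof is correct and follows essentially the same approach as the paper: both arguments compose with the natural epi $\bar\beta_n$ and mono $\iota_n$ to reduce to the known facts about $T_n$ and $\gamma_n$, and both extract $\phi'$ by restricting to the multilinear summand at $V=\bar V$. The only cosmetic difference is that where you verify $\theta(\phi)=f$ via the formal compatibility $\Theta(j\circ\phi\circ\beta_n)=\iota_n\circ\theta(\phi)\circ\bar\beta_n$ and epi--mono cancellation, the paper checks it by an explicit naturality argument using a linear map $f\colon\bar V\to V$ sending $e_i\mapsto a_i$; these are the same verification in different language.
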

\begin{proof}
Let $\phi\colon L_n\to L_m$ be a natural transformation. Then the
composite $$T_n\rOnto L_n\rTo^\phi L_m\rInto T_m$$ is a natural
transformation, which is zero as $\Hom(T_n,T_m)=0$. Thus $\phi=0$.
For the second statement, clearly $\theta$ is a monomorphism. Let
$\phi_V\colon L_n(V)\to L_n(V)$ be any natural transformation. Let
$\bar V$ be the free $R$-module of rank $n$, which defines
$\gamma_n$. Consider the commutative diagram
\begin{diagram}
T_n(\bar V)&\rOnto^{\beta_n}& L_n(\bar V)&&\rTo^{\phi_{\bar V}}&& L_n(\bar V)&\rInto& T_n(\bar V)\\
\uInto&&\uInto&&&&\uInto&&\uInto\\
\gamma_n&\rOnto^{\beta_n}&\Lie(n)&&\rDashto^{\phi'}&&\Lie(n)&\rInto&\gamma_n,\\
\end{diagram}
where the existence of $\phi'$ follows from the fact that the
composite of the maps in the top row maps $\gamma_n$ into $\gamma_n$
and $\Lie(n)=\gamma_n\cap L_n(\bar V)$. Let $V$ be any ungraded
module and let $a_1\otimes\cdots \otimes a_n$ be any homogenous
element in $V^{\otimes n}$. Let $f\colon \bar V\to V$ be $R$-linear
map such that $f(e_i)=a_i$. By the naturality, there is a
commutative diagram
\begin{diagram}
\Lie(n)&\rInto & L_n(\bar V)&\rTo^{L_n(f)}& L_n(V)\\
\dTo>{\phi'}&&\dTo>{\phi_{\bar V}}&&\dTo>{\phi_V}\\
\Lie(n)&\rInto & L_n(\bar V)&\rTo^{L_n(f)}& L_n(V)\\
\end{diagram}
Thus
$$
\theta(\phi')([[a_1,a_2],\ldots,a_n])=\phi([[a_1,a_2],\ldots,a_n])
$$
and hence the result.
\end{proof}

\begin{cor}
There is a one-to-one correspondence between the decompositions of
the functor $L_n$ and the decompositions of $\Lie(n)$ over
$R(\Sigma_n)$.\hfill $\Box$
\end{cor}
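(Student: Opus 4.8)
The plan is to derive this corollary from Proposition~\ref{proposition2.2} through the standard translation between direct sum decompositions and complete systems of orthogonal idempotents. Recall the dictionary on each side. A natural decomposition $L_n\cong\bigoplus_{i\in I}F_i$ into natural subfunctors $F_i$ of $L_n$ is precisely the datum of a complete system of orthogonal idempotents $\{\varepsilon_i\}_{i\in I}$ in $\End(L_n)$ — that is, $\varepsilon_i\varepsilon_j=\delta_{ij}\varepsilon_i$ and $\sum_{i\in I}\varepsilon_i=\id_{L_n}$ — with $F_i=\Image(\varepsilon_i)$; and, similarly, an $R(\Sigma_n)$-module decomposition $\Lie(n)\cong\bigoplus_{i\in I}W_i$ amounts to a complete system of orthogonal idempotents in $\End_{R(\Sigma_n)}(\Lie(n))$. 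Since $\Lie(n)$ is free of finite rank $(n-1)!$ over $R$, all such systems are finite, so no convergence issue arises on either side.

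With the dictionary in place, the argument I have in mind is short. Because $\theta\colon\End_{R(\Sigma_n)}(\Lie(n))\to\End(L_n)$ is an isomorphism of rings by Proposition~\ref{proposition2.2}, it carries a complete system of orthogonal idempotents summing to $\id_{\Lie(n)}$ to one summing to $\id_{L_n}$, and this assignment is a bijection between such systems on the two sides. Composing with the two dictionaries above yields the asserted one-to-one correspondence: the module decomposition $\Lie(n)\cong\bigoplus_i W_i$, encoded by idempotents $e_i$, corresponds to the functor decomposition $L_n\cong\bigoplus_i F_i$ with $F_i$ cut out by $\theta(e_i)$, and conversely.

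The only step that uses more than formal idempotent bookkeeping is pinning down the geometric meaning of the matched summand, and this is where I expect the care to lie. Unwinding the definition of $\theta$, the idempotent $\theta(e_i)$ on $L_n(V)$ is $e_i\otimes_{\id_{V^{\otimes n}}}$ under the identification $\Lie(n)\otimes_{R(\Sigma_n)}V^{\otimes n}\cong L_n(V)$ established above; hence $F_i(V)=\Image\bigl(e_i\otimes_{\id_{V^{\otimes n}}}\bigr)\cong\Image(e_i)\otimes_{R(\Sigma_n)}V^{\otimes n}=W_i\otimes_{R(\Sigma_n)}V^{\otimes n}$, using that $-\otimes_{R(\Sigma_n)}V^{\otimes n}$ is compatible with the idempotent splitting of $\Lie(n)$. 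Thus each summand of $L_n$ is obtained from the corresponding summand of $\Lie(n)$ by the functor $W\mapsto W\otimes_{R(\Sigma_n)}V^{\otimes n}$, so the correspondence is the natural one; everything else is routine.
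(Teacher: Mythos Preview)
Your proof is correct and is precisely the standard idempotent argument that the paper leaves implicit: the corollary is stated with a $\Box$ and no proof, as it is regarded as an immediate consequence of the ring isomorphism $\theta\colon\End_{R(\Sigma_n)}(\Lie(n))\to\End(L_n)$ in Proposition~\ref{proposition2.2}. Your write-up simply spells out the dictionary between decompositions and complete orthogonal idempotents that the paper takes for granted, together with the identification of the summands via $W_i\otimes_{R(\Sigma_n)}V^{\otimes n}$.
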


A functor from modules to modules $Q$ is called
\textit{$T_n$-projective} if $Q$ is naturally equivalent to a
retract of the functor $T_n$.
\begin{prop}\label{projective}
Let $Q$ be a $T_n$-projective functor and let $\phi\colon Q\to L_n$
be a natural linear transformation. Then there exists a natural
linear transformation $\tilde\phi\colon Q\to T_n$ such that
$\phi=\beta_n\circ\tilde\phi$.
\end{prop}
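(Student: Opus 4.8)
The plan is to reduce to the case $Q=T_n$ and then to exploit the fact that $\gamma_n$ is a free $R(\Sigma_n)$-module. Since $Q$ is $T_n$-projective, fix natural linear transformations $i\colon Q\to T_n$ and $p\colon T_n\to Q$ with $p\circ i=\id_Q$. If one can produce a natural linear transformation $\chi\colon T_n\to T_n$ with $\beta_n\circ\chi=\phi\circ p$, then $\tilde\phi:=\chi\circ i$ does the job, because $\beta_n\circ\tilde\phi=\beta_n\circ\chi\circ i=\phi\circ p\circ i=\phi$. Thus it suffices to prove that every natural linear transformation $\psi\colon T_n\to L_n$ factors as $\psi=\beta_n\circ\chi$ for some $\chi\in\End(T_n,T_n)$; equivalently, that post-composition with $\beta_n$ carries $\End(T_n,T_n)$ onto $\Hom(T_n,L_n)$.

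To handle this I would work entirely inside $\End(T_n,T_n)\cong\End_{R(\Sigma_n)}(\gamma_n)$, reusing the method behind~\eqref{theta} and Proposition~\ref{proposition2.2}. Composing $\psi$ with the natural inclusion $L_n\hookrightarrow T_n$ (recall $L_n(V)\subseteq V^{\otimes n}$ naturally) gives an element of $\End(T_n,T_n)$, hence via $\theta$ an $R(\Sigma_n)$-endomorphism $h$ of $\gamma_n$. Evaluating at the rank-$n$ free module $\bar V$ that defines $\gamma_n$: the image of $\psi_{\bar V}$ lies in $L_n(\bar V)$, and since $\Lie(n)=\gamma_n\cap L_n(\bar V)$ while $\gamma_n$ is generated over $R(\Sigma_n)$ by $e_1\otimes\cdots\otimes e_n$, this forces $h(\gamma_n)\subseteq\Lie(n)$, i.e. $h=\iota_0\circ h_0$ with $h_0\colon\gamma_n\to\Lie(n)$ an $R(\Sigma_n)$-homomorphism and $\iota_0\colon\Lie(n)\hookrightarrow\gamma_n$. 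Here, as in the factorization of the quotient map $x_1\otimes\cdots\otimes x_n\mapsto[[x_1,x_2],\ldots,x_n]$ used to prove $\Lie(n)\otimes_{R(\Sigma_n)}V^{\otimes n}\cong L_n(V)$, the natural transformation $L_n\hookrightarrow T_n$ composed with $\beta_n\colon T_n\to L_n$ is $\theta(\iota_0\circ\beta_n)$, where $\beta_n\colon\gamma_n\to\Lie(n)$ is the quotient map.

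The remaining point is purely algebraic: $\gamma_n$ is the regular representation $R(\Sigma_n)$, a basis being the $n!$ tensors $e_{\sigma(1)}\otimes\cdots\otimes e_{\sigma(n)}$ permuted simply transitively by $\Sigma_n$, so $\gamma_n$ is a free, hence projective, $R(\Sigma_n)$-module for any commutative ground ring $R$. Since $\beta_n\colon\gamma_n\to\Lie(n)$ is surjective, $h_0$ lifts to an $R(\Sigma_n)$-endomorphism $g$ of $\gamma_n$ with $\beta_n\circ g=h_0$. Setting $\chi:=\theta(g)$ and using that $\theta$ is a ring isomorphism together with the injectivity of the natural transformation $L_n\hookrightarrow T_n$, one computes that $\beta_n\circ\chi$ and $\psi$ have the same composite with $L_n\hookrightarrow T_n$ (both equal $\theta(\iota_0\circ h_0)=h$), whence $\beta_n\circ\chi=\psi$. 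This proves the reduced statement and hence the Proposition.

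The step I expect to be the main obstacle is the middle one: making precise the translation of the condition ``$\psi$ has image in $L_n$'' into ``the associated $R(\Sigma_n)$-endomorphism of $\gamma_n$ has image in $\Lie(n)$'', and checking that it intertwines post-composition with $\beta_n$ on the two sides. Once this naturality bookkeeping is in place the retract reduction is formal, and the final lifting is automatic because $\gamma_n$ is free over $R(\Sigma_n)$; in particular no hypothesis on the characteristic of $R$ is needed.
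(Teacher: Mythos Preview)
Your argument is correct and follows essentially the same route as the paper: translate the problem to $R(\Sigma_n)$-modules via the isomorphism $\theta$, lift through the surjection $\beta_n\colon\gamma_n\to\Lie(n)$ using projectivity, and translate back. The only organizational difference is that the paper works directly with the projective summand $\bar Q=\gamma_n\cap Q(\bar V)$ of $\gamma_n$ and the isomorphism $\Hom_{R(\Sigma_n)}(\bar Q,\Lie(n))\cong\Hom(Q,L_n)$, whereas you first reduce to $Q=T_n$ via the retraction and then use freeness of $\gamma_n$; both lead to the same lifting step.
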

\begin{proof}
Let $\bar V$ be the free $R$-module of rank $n$, which defines
$\gamma_n$. Let $\bar Q=\gamma_n\cap Q(\bar V)$. Since $Q$ is a
retract of the functor $T_n$, $\bar Q$ is a summand of $\gamma_n$
over $R(\Sigma_n)$ and so $\bar Q$ is a projective
$R(\Sigma_n)$-module. From the fact that
$\End_{R(\Sigma_n)}(\gamma_n)\cong \End(T_n)$,
$$
Q(V)=\bar Q\otimes_{R(\Sigma_n)}V^{\otimes n}
$$
for any module $V$. By the proof of Proposition~\ref{proposition2.2}, the map
$$
\theta\colon \Hom_{R(\Sigma_n)}(\bar Q,\Lie(n))\rTo \Hom(Q,L_n),
\quad f\mapsto f\otimes\id_{V^{\otimes n}}
$$
is an isomorphism. Thus there exists a unique $R(\Sigma_n)$-map
$$
\phi'\colon \bar Q\rTo \Lie(n)
$$
such that $\theta(\phi')=\phi$. Since $\bar Q$ is projective, the
lifting problem
\begin{diagram}
& &\gamma_n\\
&\ruDashto^{\tilde\phi'}&\dOnto>{\beta_n}\\
\bar Q&\rTo^{\phi'}&\Lie(n)\\
\end{diagram}
has a solution. The assertion follows by tensoring with $V^{\otimes
n}$ over $R(\Sigma_n)$.
\end{proof}

By inspecting the proof, each $T_n$-projective sub functor $Q$ of
$L_n$ induces a $R(\Sigma_n)$-projective submodule $\bar Q$ of
$\Lie(n)$. Conversely, each $R(\Sigma_n)$-projective submodule $\bar
Q$ of $\Lie(n)$ induces a $T_n$-projective sub functor $Q$, $
V\mapsto \bar Q\otimes_{R(\Sigma_n)}V^{\otimes n}$, of $L_n$. Thus
we have the following:
\begin{prop}\label{proposition2.5}
There is a one-to-one correspondence between $T_n$-projective sub
functors of $L_n$ and $R(\Sigma_n)$-projective submodules of
$\Lie(n)$.\hfill $\Box$
\end{prop}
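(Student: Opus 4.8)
The plan is to make the correspondence explicit through the two assignments
$$
Q \longmapsto \bar Q := \gamma_n \cap Q(\bar V)
\qquad \text{and} \qquad
\bar Q \longmapsto \bigl(\, V \mapsto \bar Q \otimes_{R(\Sigma_n)} V^{\otimes n} \,\bigr),
$$
where $\bar V$ is the free $R$-module of rank $n$ that defines $\gamma_n$, and then to check that they are mutually inverse. Everything needed is already in place: the isomorphism $\theta$ of~\eqref{theta}, which forces any $T_n$-projective functor $Q$ to have the shape $Q(V) = \bar Q \otimes_{R(\Sigma_n)} V^{\otimes n}$ with $\bar Q = \gamma_n \cap Q(\bar V)$ a projective $R(\Sigma_n)$-retract of $\gamma_n$ (this is precisely what is extracted in the proof of Proposition~\ref{projective}); the identity $\gamma_n \cap L_n(\bar V) = \Lie(n)$; and the induced isomorphism $\theta\colon \Hom_{R(\Sigma_n)}(\bar Q,\Lie(n)) \rTo \Hom(Q,L_n)$ from that same proof.

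For the forward assignment, let $Q$ be a $T_n$-projective sub functor of $L_n$. By the structure statement above, $\bar Q = \gamma_n \cap Q(\bar V)$ is a projective $R(\Sigma_n)$-module and $Q(V) = \bar Q \otimes_{R(\Sigma_n)} V^{\otimes n}$ naturally in $V$. Since $Q(\bar V) \subseteq L_n(\bar V)$, one has $\bar Q = \gamma_n \cap Q(\bar V) \subseteq \gamma_n \cap L_n(\bar V) = \Lie(n)$, so $\bar Q$ is an $R(\Sigma_n)$-projective submodule of $\Lie(n)$. This half is essentially bookkeeping.

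For the backward assignment, given an $R(\Sigma_n)$-projective submodule $\bar Q \subseteq \Lie(n)$, put $Q(V) = \bar Q \otimes_{R(\Sigma_n)} V^{\otimes n}$ and let $\phi\colon Q \to L_n$ be the natural transformation obtained by tensoring the inclusion $\bar Q \hookrightarrow \Lie(n)$ with $V^{\otimes n}$ over $R(\Sigma_n)$, equivalently $\theta$ applied to that inclusion. Two points must be verified here: that $\phi$ is a monomorphism of functors, so that $Q$ is genuinely a sub functor of $L_n$ and not merely a functor mapping to it, and that $Q$ is $T_n$-projective. Both are where projectivity of $\bar Q$ is brought to bear — writing $\bar Q$ as a retract of a free $R(\Sigma_n)$-module makes $\bar Q \otimes_{R(\Sigma_n)} (-)^{\otimes n}$ exact enough to keep the inclusion into $L_n$ injective and at the same time lets one display $Q$ as a retract of $T_n$. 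To close the loop, evaluate at $\bar V$: using the $R(\Sigma_n)$-module splitting $\bar V^{\otimes n} = \gamma_n \oplus \gamma_n'$, where $\gamma_n$ is the regular representation (the span of the tensors with all indices distinct) and $\gamma_n'$ the span of those with a repeated index, one gets $\gamma_n \cap \bigl( \bar Q \otimes_{R(\Sigma_n)} \bar V^{\otimes n} \bigr) = \bar Q \otimes_{R(\Sigma_n)} \gamma_n = \bar Q$, recovering $\bar Q$ from the functor it generates; the opposite composite is immediate from the shape $Q(V) = \bar Q \otimes_{R(\Sigma_n)} V^{\otimes n}$. The step I expect to be the main obstacle is exactly this backward well-definedness: confirming that the purely algebraic inclusion $\bar Q \hookrightarrow \Lie(n)$ tensors up to an injection of functors and that the functor it produces is a retract of $T_n$ (this is where one must use, beyond bare projectivity, that such a $\bar Q$ is in fact a summand of $\gamma_n$). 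Once that is settled, the remainder is a direct translation of the already-established properties of $\theta$.
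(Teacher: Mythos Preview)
Your proposal is correct and follows exactly the route the paper takes: the paper's entire argument is the paragraph immediately preceding the proposition, which sets up the two assignments $Q\mapsto \bar Q=\gamma_n\cap Q(\bar V)$ and $\bar Q\mapsto (V\mapsto \bar Q\otimes_{R(\Sigma_n)}V^{\otimes n})$ by reference to the proof of Proposition~\ref{projective}, and then places the $\Box$. Your write-up is in fact more thorough than the paper's, since you explicitly flag the point that the backward assignment must produce a genuine sub functor and a retract of $T_n$ (equivalently, that $\bar Q$ must be a summand of $\gamma_n$), which the paper leaves implicit.
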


\section{Proof of Theorem~\ref{theorem1.1}}\label{section3}

In this section, the ground field is of characteristic $p>2$. The
notation $V$ means any fixed connected graded module such that
$V_{\even}=0$ and $\dim V=p-1$. The general graded or ungraded module is then denoted by $W$.

Let $W$ be any module and let $T\colon W\mapsto T(W)$ be the functor from modules to Hopf algebras, where the tensor algebra $T(W)$ is Hopf by saying $W$ primitive.

Let $B(W)$ be the sub Hopf algebra generated
by $L_n(W)$ for $n$ not a power of $p$. By~\cite[Theorem 1.5]{SW1},  $B(W)\subseteq B^{\max}(W)$ for any
ungraded module $W$ and so for any graded or ungraded module $W$. It follows that there is an
epimorphism
\begin{equation}\label{equation3.1}
q\colon \bfk\otimes_{B(W)}T(W)\rOnto A^{\min}(W)
\end{equation}
for any ungraded or graded module $W$.

We are going to determine $\bfk\otimes_{B(V)}T(V)$ and to show that
the map $q$ is in fact an isomorphism when $W=V$.

\subsection{Determination of $\bfk\otimes_{B(V)}T(V)$}

Let $B^{[k]}(W)=\la B(W), L_{p^s}(W) \ | s\geq k\ra$ be the sub Hopf
algebra of $T(W)$ generated by $B(W)$ and $L_{p^s}(W)$ for $s\geq
k$. Then there is a tower of sub Hopf algebras
$$
\cdots \subseteq B^{[k+1]}(W)\subseteq B^{[k]}(W)\subseteq \cdots
\subseteq B^{[1]}(W)\subseteq B^{[0]}(W)=T(W)
$$
with the intersection
$$
B(W)=\bigcap_{k=0}^\infty B^{[k]}(W).
$$
Define the functor $\bar L_n$ by:
$$
\bar L_n(W)=L_n(W)/(\sum_{i=2}^{n-2}[L_i(W),L_{n-i}(W)])
$$
for each $n\geq 2$. According to~\cite[Proposition 11.6]{SW1}, $Q_{p}B^{[1]}(W)=\bar L_p$. So we can
identify $\bar L_p$ with a sub functor of $L_p$. Now define
recursively the functor $\bar L_p^k$ by $\bar L_p^0(W)=W$, $\bar
L_p^1(W)=\bar L_p(W)$ and $\bar L_p^{k+1}(W)=\bar L_p(\bar
L_p^k(W))$. Note that $\bar L_p^k$ has the tensor length $p^k$ and $\bar L_p^k$ can be identified with a sub functor of $L_{p^k}$.
Observe that $(\bar L_p(V))_{\even}=0$ and $\dim \bar L_p(V)=p-1$.
Thus we have the periodicity information:
\begin{equation}\label{equation3.3}
(\bar L_p^k(V))_{\even}=0  \textrm { and } \dim \bar L_p^k(V)=p-1
\end{equation}
for all $k\geq 0$.

\begin{lem}~\cite[Lemma 2.37]{Wu}\label{lemma3.1}
Let $A$ be a connected Hopf algebra of finite type and let $B$ be a
sub Hopf algebra of $A$. Suppose that $A$ is a tensor algebra as an
algebra with a choice of inclusion $QA\to A$. Then there is a short
exact sequence
$$
0\rTo Q(B)\rTo (k\otimes_BA)\otimes Q(A)\rTo I(k\otimes_BA)\rTo0.
$$
\end{lem}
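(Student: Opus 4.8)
The plan is to deduce the sequence from the standard free resolution of $k$ over a tensor algebra, together with the Milnor--Moore freeness theorem for sub Hopf algebras. Using the chosen section $Q(A)\hookrightarrow A$, identify $A$ as an algebra with the tensor algebra $T(W)$ on $W=Q(A)$. Then the augmentation ideal $I(A)=\bigoplus_{n\geq1}W^{\otimes n}$ is free as a left $A$-module on the tensor-length-one part: the concatenation map $A\otimes Q(A)\to I(A)$, $a\otimes w\mapsto aw$, is an isomorphism of left $A$-modules, where $A$ acts on the first tensor factor only. Hence there is a short exact sequence of left $A$-modules
$$
0\to A\otimes Q(A)\to A\to k\to0 ,
$$
in which the first map is the multiplication map $\mu$, the second is the augmentation $\varepsilon$, and $A\otimes Q(A)$ is a free left $A$-module.

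The next step is to restrict this sequence along $B\hookrightarrow A$ to a short exact sequence of left $B$-modules and apply $k\otimes_B(-)$. Since $A$ --- and therefore also $A\otimes Q(A)$, which is a direct sum of degree-shifted copies of $A$ --- is free as a left $B$-module by the Milnor--Moore theorem (this is where connectedness and finite type over a field are used), we have $\Tor^B_i(k,A)=0=\Tor^B_i(k,A\otimes Q(A))$ for $i>0$, and the long exact $\Tor^B_*(k,-)$-sequence collapses to the four-term exact sequence
$$
0\to \Tor^B_1(k,k)\to k\otimes_B(A\otimes Q(A))\to k\otimes_B A\to k\otimes_B k\to0 ,
$$
in which the map $k\otimes_B(A\otimes Q(A))\to k\otimes_B A$ is induced by $\mu$.

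It remains to make three standard identifications. First, $B$ acts trivially on the $Q(A)$-tensor factor, so $k\otimes_B(A\otimes Q(A))\cong(k\otimes_B A)\otimes Q(A)$. Second, for the connected algebra $B$ one has $\Tor^B_1(k,k)\cong I(B)/I(B)^2=Q(B)$; unwinding the connecting homomorphism identifies the resulting monomorphism $Q(B)\hookrightarrow(k\otimes_B A)\otimes Q(A)$ as the map sending the class of $b\in I(B)$ to $\sum_i [a_i]\otimes w_i$ for any expression $b=\sum_i a_i w_i$ in $T(W)$. Third, $k\otimes_B k=k$ and the map $k\otimes_B A\to k$ is the augmentation of the quotient coalgebra $k\otimes_B A$, whose kernel is $I(k\otimes_B A)$; by right-exactness the image of $(k\otimes_B A)\otimes Q(A)\to k\otimes_B A$ is therefore exactly $I(k\otimes_B A)$. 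Splicing these into the four-term sequence yields the asserted short exact sequence. The genuine content lies in the standard resolution for $T(W)$ and in the Milnor--Moore freeness of $A$ over $B$; everything afterwards is bookkeeping with augmentation ideals and indecomposables, so the one point needing care is that freeness (together with its inheritance by $A\otimes Q(A)$ after restriction to $B$).
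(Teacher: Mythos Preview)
The paper itself gives no proof of this lemma; it is merely quoted from \cite[Lemma 2.37]{Wu}. Your argument is correct and is the standard one: the two-term free resolution $0\to A\otimes Q(A)\to A\to k\to 0$ of $k$ available when $A\cong T(Q(A))$, combined with Milnor--Moore freeness of $A$ (hence of $A\otimes Q(A)$) as a left $B$-module, makes the long exact $\Tor^B_*(k,-)$ sequence collapse to a four-term sequence, and the identifications $\Tor^B_1(k,k)\cong Q(B)$, $k\otimes_B(A\otimes Q(A))\cong (k\otimes_BA)\otimes Q(A)$, and $\ker\bigl(k\otimes_BA\to k\bigr)=I(k\otimes_BA)$ are routine. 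Nothing is missing; the only remark is that the finite-type hypothesis is not actually needed for Milnor--Moore freeness over a field, though it does no harm.
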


Now we determine the sub Hopf algebra $B^{[k]}(V)$:

\begin{lem}\label{lemma3.2}
For each $k\geq 0$, there is a short exact sequence of Hopf algebras
$$
B^{[k+1]}(V)\rInto B^{[k]}(V)\rOnto E(\bar L_p^k(V)).
$$
Moreover
\begin{enumerate}
\item $B^{[k]}(V)$ is the sub Hopf algebra of $T(V)$ generated by $\bar
L_p^k(V)$ and $Q_jB(V)$ for $2\leq j<p^k$.
\item For any possible Steenrod module structure on $V$, $\bar L_p^k(V)$ is a suspension of
$V$.
\end{enumerate}
\end{lem}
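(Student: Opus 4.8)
The plan is to induct on $k$, carrying along as the inductive hypothesis $H(k)$ the assertions that $B^{[k]}(V)$ is a tensor algebra, that $QB^{[k]}(V)$ is concentrated in tensor lengths $\le p^{k}$, that $Q_jB^{[k]}(V)=Q_jB(V)$ for $2\le j<p^{k}$, and that $Q_{p^k}B^{[k]}(V)=\bar L_p^{k}(V)$. Once the induction is run, $H(k)$ yields part (1), and the explicit form of $\bar L_p^{k}(V)$ produced along the way gives part (2). The base case $H(0)$ is immediate: $B^{[0]}(V)=T(V)$, $QT(V)=V=\bar L_p^0(V)$, which is trivially a suspension of $V$.

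Assume $H(k)$. First I would produce the short exact sequence. Writing $L^{(k)}$ for the Lie subalgebra of $L(V)$ generated by the $L_n(V)$ with $n\notin\{1,p,\dots,p^{k-1}\}$, one has $B^{[k]}(V)=U(L^{(k)})$; by the Shirshov--Witt theorem $L^{(k)}$ is free, consistently with $H(k)$, and $L^{(k)}=L(W_k)$ with $W_k=QB^{[k]}(V)$. The combinatorial point is that $L^{(k)}$ and $L^{(k+1)}$ have the same generators in every tensor length $\ne p^{k}$, so $L^{(k)}_j=L^{(k+1)}_j$ for all $j\ne p^{k}$. Hence $L^{(k+1)}$ is a Lie ideal of $L^{(k)}$ whose quotient is concentrated in tensor length $p^{k}$, equal to $L^{(k)}_{p^k}/L^{(k+1)}_{p^k}=Q_{p^k}B^{[k]}(V)=\bar L_p^{k}(V)$, and abelian because brackets of its elements land in tensor length $2p^{k}$. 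Passing to enveloping algebras gives $B^{[k+1]}(V)\rInto B^{[k]}(V)\rOnto U(\bar L_p^{k}(V))$, and $U(\bar L_p^{k}(V))=E(\bar L_p^{k}(V))$ since $(\bar L_p^{k}(V))_{\even}=0$ by~\eqref{equation3.3} and $p>2$.

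To advance to $H(k+1)$ I would feed the pair $B^{[k+1]}(V)\subseteq B^{[k]}(V)=T(W_k)$ into Lemma~\ref{lemma3.1}, obtaining the exact sequence $0\to QB^{[k+1]}(V)\to E(\bar L_p^{k}(V))\otimes W_k\to I(E(\bar L_p^{k}(V)))\to0$. As $W_k$ lives in tensor lengths $\le p^{k}$ and $E(\bar L_p^{k}(V))$ in tensor lengths $\le(p-1)p^{k}$, $QB^{[k+1]}(V)$ lives in tensor lengths $\le p^{k+1}$; in tensor length $p^{k+1}$ the middle term reduces to the single summand $\Lambda^{p-1}\bar L_p^{k}(V)\otimes\bar L_p^{k}(V)$, while the right-hand term vanishes, being $\Lambda^{p}\bar L_p^{k}(V)=0$ (here $\dim\bar L_p^{k}(V)=p-1<p$). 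Thus $Q_{p^{k+1}}B^{[k+1]}(V)\cong\Lambda^{p-1}\bar L_p^{k}(V)\otimes\bar L_p^{k}(V)$. Running the identical computation for $B^{[1]}(\bar L_p^{k}(V))\subseteq T(\bar L_p^{k}(V))$ and invoking $Q_pB^{[1]}(W)=\bar L_p(W)$ from~\cite[Proposition 11.6]{SW1} identifies this with $\bar L_p(\bar L_p^{k}(V))=\bar L_p^{k+1}(V)$. Together with the elementary fact that $B^{[k+1]}(V)$ and $B(V)$ have the same generators in tensor lengths $<p^{k+1}$ (whence $Q_jB^{[k+1]}(V)=Q_jB(V)$ there) and a further use of Shirshov--Witt to write $B^{[k+1]}(V)=T(QB^{[k+1]}(V))$, this is $H(k+1)$, completing the induction and part (1). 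Part (2) then follows: $\bar L_p^{k+1}(V)\cong\Lambda^{p-1}\bar L_p^{k}(V)\otimes\bar L_p^{k}(V)$ is a degree shift of $\bar L_p^{k}(V)$ (a $(p-1)$-dimensional module has one-dimensional top exterior power), hence of $V$ by $H(k)$, and the shift is compatible with any Steenrod-module structure.

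The hard part will be the tensor-length bookkeeping in this last step: one must know that $\bar L_p^{k}(V)$ occupies the top tensor length of $W_k$ and that nothing occupies higher tensor length, so that the sequence of Lemma~\ref{lemma3.1} really collapses to a single summand in tensor length $p^{k+1}$, and one must match that determinant twist with the recursively defined $\bar L_p^{k+1}(V)$. This is precisely where $\dim V=p-1$ is indispensable: it forces $\Lambda^{p}\bar L_p^{k}(V)=0$ and, via~\eqref{equation3.3}, keeps $\bar L_p^{k}(V)$ odd and $(p-1)$-dimensional at every stage, so that the exterior-algebra quotient in the short exact sequence persists. Without this hypothesis the sequence of Lemma~\ref{lemma3.1} would not collapse and the quotient would not be an exterior algebra.
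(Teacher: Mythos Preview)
Your proof is correct and follows the same inductive skeleton as the paper: carry along a description of $QB^{[k]}(V)$, apply Lemma~\ref{lemma3.1} to the pair $B^{[k+1]}(V)\subseteq B^{[k]}(V)$, and read off $Q_{p^{k+1}}B^{[k+1]}(V)\cong E_{p-1}(\bar L_p^{k}(V))\otimes\bar L_p^{k}(V)$, with $\dim V=p-1$ forcing the collapse.

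The one genuine methodological difference is in how the short exact sequence is obtained. The paper argues \emph{top-down}: it first builds the quotient map $\phi\colon B^{[k]}(V)\to\Lambda(\bar L_p^{k}(V))$, sets $B''(V)=\bfk\,\square_{\Lambda(\bar L_p^{k}(V))}B^{[k]}(V)$, and then sandwiches $B^{[k+1]}(V)\subseteq B''(V)\subseteq B^{[k+1]}(V)$, the second inclusion coming from Lemma~\ref{lemma3.1} showing $Q_jB''(V)=0$ for $j>p^{k+1}$. You argue \emph{bottom-up} on the Lie level: using that $L^{(k)}_j\subseteq L_j(V)\subseteq L^{(k+1)}$ whenever $j\notin\{1,p,\dots,p^{k}\}$, you show $L^{(k+1)}$ is a Lie ideal of $L^{(k)}$ with abelian quotient concentrated in tensor length $p^{k}$, and then pass to enveloping algebras. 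Your route is more direct and avoids the auxiliary $B''$; the paper's route makes the Hopf-quotient description explicit from the outset. Both implicitly rely on Shirshov--Witt to know that $B^{[k]}(V)$ is a tensor algebra, which is needed to invoke Lemma~\ref{lemma3.1}; you state this explicitly in $H(k)$, the paper leaves it tacit. The identification $Q_{p^{k+1}}B^{[k+1]}(V)=\bar L_p^{k+1}(V)$ is handled at the same level of detail in both proofs (the paper says ``by the Lie action'', you compare with the analogous sequence for $B^{[1]}(\bar L_p^{k}(V))\subseteq T(\bar L_p^{k}(V))$); neither spells out the compatibility map $T(\bar L_p^{k}(V))\to B^{[k]}(V)$, but the abstract isomorphism suffices for everything downstream.
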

\begin{proof}
The proof is given by induction. First consider the short exact
sequence of Hopf algebra
$$
B'(V)\rInto T(V)\rOnto \Lambda(V),
$$
where $B'(V)=\bfk\square_{\Lambda(V)}T(V)$. Then
$B^{[1]}(V)\subseteq B'(V)$. There is a short exact sequence
$$
0\rTo Q(B'(V))\rTo \Lambda(V) \otimes V\rTo I\Lambda(V)\rTo0.
$$
Since $V_{\even}=0$, $E(V)=\Lambda(V)$. By the assumption of $\dim
V=p-1$, we have $E(V)_j=0$ for $j\geq p$. Thus $Q_j(B'(V))=0$ for
$j>p$ and so $B'(V)\subseteq B^{[1]}(V)$. Hence $B'(V)=B^{[1]}(V)$
From the above exact sequence,
$$
Q_p(B'(V))\cong E_{p-1}(V)\otimes V\cong Q_p(B^{[1]}(V))=\bar L_p(V)
$$
is of dimension $p-1$. Since $\dim E_{p-1}(V)=1$, $\bar
L_p(V)=E_{p-1}(V)\otimes V$ is a suspension of $V$ for any possible
Steenrod module structure on $V$. Thus the assertions hold for
$k=0$. Suppose that the assertion holds for $k$. Consider the short
exact sequence of Hopf algebras
$$
B''(V)\rInto B^{[k]}(V)\rOnto^{\phi} \Lambda(\bar L_p^k(V)),
$$
where $B''(V)=\bfk\square_{\Lambda(\bar L_p^k(V))}B^{[k]}(V)$. Then
$B^{[k+1]}(V)\subseteq B''(V)$ because $\phi$ is a Hopf map which
sends the generators for $B^{[k+1]}(V)$ to zero. Since $\bar
L_p^k(V)_{\even}=0$ with $\dim \bar L_p^k(V)=p-1$, $\Lambda(\bar
L_p^k(V))=E(\bar L_p^k(V))$ and so there is short exact sequence
$$
0\rTo Q(B''(V))\rTo E(\bar L_p^k (V)) \otimes Q(B^{[k]}(V)\rTo
IE(\bar L_p^k(V))\rTo0.
$$
It follows that $Q(B''(V))_j=0$ for $j>p^{k+1}$ and so
$B''(V)\subseteq B^{[k+1]}(V)$. By the Lie action of $E(\bar
L_p^k(V))$ on $Q(B''(V))$,
$$
Q_{p^{k+1}}B^{[k+1]}(V)=Q_{p^{k+1}}B''(V)=\bar L_p^{k+1}(V)\cong
E_{p-1}(\bar L_p^k(V))\otimes \bar L_p^k(V).
$$
Since $\dim E_{p-1}(\bar L_p^k(V))=1$, $\bar L_p^{k+1}(V)$ is a
suspension of $\bar L_p^k(V)$. The induction is finished and hence
the result.
\end{proof}

\begin{prop}\label{proposition3.3}
There is a (non-functorial) coalgebra decomposition
$$
\bfk\otimes_{B(V)}T(V)\cong \bigotimes_{k=0}^\infty E(\bar L_p^k(V))
$$
over any possible Steenrod algebra structure on $V$.
\end{prop}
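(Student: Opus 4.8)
The plan is to assemble $\bfk\otimes_{B(V)}T(V)$ from the tower of sub-Hopf-algebras $B^{[k]}(V)$ supplied by Lemma~\ref{lemma3.2}, one short exact sequence at a time, and then pass to a limit, exploiting that the successive quotients $E(\bar L_p^k(V))$ have tensor length $p^k$, which tends to infinity.

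First I would record the structure theory of extensions of connected graded Hopf algebras of finite type over a field, in the form of~\cite[Lemma 5.3]{SW1}: a short exact sequence $B'\rInto B\rOnto C$ makes $B$ a free (left and right) $B'$-module and yields a (non-functorial) coalgebra isomorphism $B\cong C\otimes B'$ with $\bfk\otimes_{B'}B\cong C$. Applying this to the short exact sequences $B^{[k+1]}(V)\rInto B^{[k]}(V)\rOnto E(\bar L_p^k(V))$ of Lemma~\ref{lemma3.2}, and iterating down from $B^{[0]}(V)=T(V)$, one obtains for every $k$ a coalgebra isomorphism
\[
\bfk\otimes_{B^{[k]}(V)}T(V)\ \cong\ \bigotimes_{j=0}^{k-1}E(\bar L_p^j(V)),
\]
together with the freeness of $T(V)$ as a $B^{[k]}(V)$-module needed to perform the next reduction.

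Next I would compare $\bfk\otimes_{B(V)}T(V)$ with these finite stages. By Lemma~\ref{lemma3.2}(1) the Hopf algebras $B^{[k]}(V)$ and $B(V)$ have the same generators in tensor lengths $<p^k$ (the $Q_jB(V)$ with $2\le j<p^k$), hence coincide in each such tensor length as subspaces of $T(V)$; consequently the natural surjection $\bfk\otimes_{B(V)}T(V)\rOnto\bfk\otimes_{B^{[k]}(V)}T(V)$ is an isomorphism in tensor lengths $<p^k$. On the other hand $E(\bar L_p^j(V))$ has augmentation ideal concentrated in tensor lengths $\ge p^j$, so $\bigotimes_{j=0}^{k-1}E(\bar L_p^j(V))$ agrees with $\bigotimes_{j=0}^{\infty}E(\bar L_p^j(V))$ in tensor lengths $<p^k$. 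Since $V$ is positively graded of finite type, only finitely many tensor lengths contribute to each internal degree, so letting $k\to\infty$ identifies $\bfk\otimes_{B(V)}T(V)$ with $\bigotimes_{k=0}^{\infty}E(\bar L_p^k(V))$ degree by degree and compatibly with comultiplications, which is the asserted coalgebra isomorphism.

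The step I expect to be the main obstacle is making the stagewise coalgebra isomorphisms cohere: the splittings of the individual extensions are non-canonical, so one must check that the projections $\bfk\otimes_{B^{[k+1]}(V)}T(V)\rOnto\bfk\otimes_{B^{[k]}(V)}T(V)$ correspond, under the displayed isomorphisms, to killing the top exterior factor $E(\bar L_p^k(V))$, so that the degreewise identifications glue into a single coalgebra isomorphism in the limit. The stabilization $B^{[k]}(V)=B(V)$ below tensor length $p^k$ is exactly what lets one choose the splittings consistently, by extending at stage $k+1$ the splitting already fixed in tensor lengths $<p^{k}$; and since by Lemma~\ref{lemma3.2}(2) each $\bar L_p^j(V)$ is a suspension of $V$ and every construction involved ($B^{[k]}$, the cotensor products, the quotients $E(\bar L_p^k(V))$) carries any Steenrod module structure on $V$, these choices may in addition be made Steenrod-equivariantly, which gives the statement "over any possible Steenrod algebra structure on $V$".
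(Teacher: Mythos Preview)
Your argument is correct and takes a genuinely different route from the paper's. The paper quotients by $B(V)$ at the outset and works with the increasing filtration $\bfk\otimes_{B(V)}B^{[k]}(V)$ of $\bfk\otimes_{B(V)}T(V)$; to split the short exact sequence of \emph{coalgebras}~(\ref{equation3.2}) at each stage it first produces the coalgebra cross-section $s$ from the tensor-length observation, but then needs a multiplication on $\bfk\otimes_{B(V)}B^{[k]}(V)$ to apply~\cite[Lemma~5.3]{SW1}, and obtains this quasi-Hopf structure from the external input~\cite[Theorem~1.1]{LW2} that $B$ is a functorial coalgebra retract of $T$. You instead quotient by the honest sub-Hopf-algebras $B^{[k]}(V)$, so the splittings of $B^{[k+1]}(V)\hookrightarrow B^{[k]}(V)\twoheadrightarrow E(\bar L_p^k(V))$ live entirely in the world of connected Hopf algebras and no quasi-Hopf detour is needed; the price is the limit/coherence step, which you handle cleanly via the stabilization $B^{[k]}(V)=B(V)$ in tensor lengths $<p^k$ (a direct consequence of Lemma~\ref{lemma3.2}(1)). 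One small point worth making explicit: \cite[Lemma~5.3]{SW1} as used in the paper takes as input a coalgebra section, so in your first paragraph you should note that the primitives $\bar L_p^k(V)$ of $E(\bar L_p^k(V))$ lift to primitives of $B^{[k]}(V)$ (they lie in $L_{p^k}(V)$), and since $\dim\bar L_p^k(V)=p-1$ the antisymmetrized products give a coalgebra section $E(\bar L_p^k(V))\to B^{[k]}(V)$; this also makes the Steenrod-equivariance transparent.
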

\begin{proof}
Consider the short exact sequence of coalgebras
\begin{equation}\label{equation3.2}
\bfk\otimes_{B(V)}B^{[k+1]}(V)\rInto^j
\bfk\otimes_{B(V)}B^{[k]}(V)\rOnto E(\bar L_p^k(V)).
\end{equation}
Observe that up to tensor length $p^{k+1}-1$
$$
(\bfk\otimes_{B(V)}B^{[k]}(V))_j\cong E(\bar L_p^k(V))_j
$$
for $j\leq p^{k+1}$. Thus there is a coalgebra cross-section
$$
s\colon E(\bar L_p^k(V))\rInto \bfk\otimes_{B(V)}B^{[k]}(V).
$$
The cross-section $s$ is a morphism over the Steenrod algebra for
all possible Steenrod algebra structure on $V$ and it is also a
morphism over $\GL(V)$ when $\GL(V)$ acts on $V$ by forgetting the
grading of $V$. 

According to~\cite[Theorem 1.1]{LW2}, $B(W)$ is a functorial coalgebra retract of $T(W)$ 
for any graded module $W$. Let $r_W\colon T(W)\to B(W)$ be a functorial coalgebra retraction. 
From $B(W)\subseteq B^{[k]}(W)\subseteq T(W)$, the restriction 
$$r_V|_{B^{[k]}(W)}\colon B^{[k]}(V)\rTo B(W)$$ is 
a functorial coalgebra retraction. It follows that the short exact sequence of coalgebras
$$
B(W)\rInto B^{[k]}(W)\rOnto \bfk\otimes_{B(W)} B^{[k]}(W)
$$
splits off. This gives a functorial coalgebra decomposition
$$
B^{[k]}(W)\cong B(W)\otimes (\bfk\otimes_{B(W)} B^{[k]}(W))
$$
for any graded module $W$. Thus the functor
$$
W\mapsto \bfk\otimes_{B(W)}B^{[k]}(W)
$$
is a functorial from modules to quasi-Hopf algebra because $\bfk\otimes_{B(W)}B^{[k]}(W)$ is a functorial coalgebra retract of the Hopf algebra functor $B^{[k]}(W)$.

Evaluating $W=V$, $\bfk\otimes_{B(V)}B^{[k]}(V)$ is a quasi-Hopf
algebra over the Steenrod algebra for
all possible Steenrod algebra structure on $V$. By ~\cite[Lemma 5.3]{SW1}, the map
$$
(\bfk\otimes_{B(V)}B^{[k+1]}(V))\otimes E(\bar
L_p^k(V))\rTo^{\mu(j\otimes s)} \bfk\otimes_{B(V)}B^{[k]}(V)
$$
is an isomorphism and hence the result.

\end{proof}

\subsection{Proof of Theorem~\ref{theorem1.1}}
Consider the functorial short exact sequence of algebras
$$
B^{\max}(W)\rTo T(W)\rTo^r A^{\min}(W)=\bfk\otimes_{B^{\max}}T(W).
$$
Define
$$
A^{\min}_n(W)=\im( T_n(W)\rInto T(W)\rTo^r A^{\min}(W).
$$
Then
$$
A^{\min}(W)=\bigoplus_{n=0}^\infty A^{\min}_n(W)
$$ is a bigraded coalgebra and the decomposition
$$
T(W)\cong B^{\max}(W)\otimes A^{\min}(W)
$$
is a functorial coalgebra decomposition of bi-graded coalgebras,
where the second grading on $B^{\max}(W)$ is given by
$B^{\max}_n(W)=T_n(W)\cap B^{\max}(W).$ Note that
$$
L_n(W)=\im(\beta_n\colon W^{\otimes n}\to W^{\otimes n}).
$$
Denote by $\beta_n$ the epimorphism $W^{\otimes n}\twoheadrightarrow
L_n(W)$ if there are no confusions. The algebraic version of the
James-Hopf map $H_n\colon T(W)\rTo T(W^{\otimes n})$ is defined
in~\cite{GW,SW1,SW2}.

\begin{lem}\label{lemma3.4}
There is a commutative diagram of natural transformations of
functors
\begin{diagram}
& & T&\rTo^{H_{p^k}}& T(T_{p^k})& \rTo^{T(r)}& T(A^{\min}_{p^k})&\rOnto &A^{\min}(A^{\min}_{p^k})\\
&&\uInto&&\uInto&&\uInto &&\uTo\\
Q^{\max}_{p^{k+1}}&\rInto& L_{p^{k+1}}&\rTo& L_p(T_{p^k})&
\rTo^{L_p(r)}&
L_p(A^{\min}_{p^k})&\rOnto &\bar L_p(A^{\min}_{p^k})\\
\uEq&& \uOnto>{\beta_{p^k}}&&\uOnto>{\beta_p}&&\uOnto>{\beta_p}&&\\
Q^{\max}_{p^{k+1}}&\rTo^{\phi'}& T_{p^{k+1}}&\rTo^{\phi}&
T_p(T_{p^k})&\rTo^{T_p(r)}& T_p(A^{\min}_{p^k})&&\\
\end{diagram}
for some $\phi$ and $\phi'$.
\end{lem}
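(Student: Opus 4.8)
The diagram breaks into a ``formal'' right-hand part and a ``combinatorial'' left-hand part. For the right-hand part, the first thing to pin down is the corner transformation $\bar L_p(A^{\min}_{p^k})\to A^{\min}(A^{\min}_{p^k})$: the natural composite $L_p(W)\hookrightarrow T(W)\xrightarrow{r} A^{\min}(W)$ annihilates the decomposables $\sum_{i=2}^{p-2}[L_i(W),L_{p-i}(W)]$, because for $2\le i\le p-2$ neither $i$ nor $p-i$ is a power of $p$, so these brackets lie in $B(W)\subseteq B^{\max}(W)=\ker r$ by \cite[Theorem 1.5]{SW1}; hence $L_p(W)\to A^{\min}(W)$ factors through $\bar L_p(W)$, which furnishes the corner map. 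With this convention the two rightmost squares of the top row commute by naturality of the inclusion $L_p(-)\hookrightarrow T(-)$ in its alphabet (applied to $r\colon T_{p^k}\twoheadrightarrow A^{\min}_{p^k}$) together with that definition, and the square joining the two copies of $\beta_p$ commutes because $\beta_p$ is natural in its alphabet.

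For the left-hand column we make two lifting constructions. The map $\phi'$ comes from Proposition~\ref{projective}: $Q^{\max}_{p^{k+1}}$ is $T_{p^{k+1}}$-projective and carries a natural monomorphism into $L_{p^{k+1}}$ (corresponding, by Proposition~\ref{proposition2.5}, to a $\bfk\Sigma_{p^{k+1}}$-projective submodule of $\Lie(p^{k+1})$); lifting this inclusion through the bracketing epimorphism $\beta_{p^{k+1}}\colon T_{p^{k+1}}\twoheadrightarrow L_{p^{k+1}}$ defines $\phi'$ and makes the leftmost square commute by construction. The map $\phi$ is the tensor-length-$p$ component $\operatorname{pr}_p\circ H_{p^k}$ of the James-Hopf map restricted to $T_{p^{k+1}}$, read through the canonical identification $T_p\circ T_{p^k}\cong T_{p^{k+1}}$; the lower-length components of $H_{p^k}|_{T_{p^{k+1}}}$, which land in $T_i(T_{p^k})$ with $i<p$, are discarded. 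The still-unlabelled middle-row arrow $L_{p^{k+1}}\to L_p(T_{p^k})$ is \emph{defined} to be the corestriction of $H_{p^k}|_{L_{p^{k+1}}}$, which is legitimate precisely because $H_{p^k}$ carries $L_{p^{k+1}}(W)$ into $L_p(T_{p^k}(W))$; with this definition the top-left square commutes. One square then remains: the identity $\beta_p\circ\operatorname{pr}_p\circ H_{p^k}=H_{p^k}\circ\beta_{p^{k+1}}$ of natural transformations $T_{p^{k+1}}\to L_p(T_{p^k})$ (this also yields the needed inclusion $H_{p^k}(L_{p^{k+1}})\subseteq L_p(T_{p^k})$, since $\beta_{p^{k+1}}$ is onto $L_{p^{k+1}}$). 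Via the ring isomorphism $\theta$ of \eqref{theta} (equivalently Proposition~\ref{proposition2.2}, or \cite[Lemma 3.8]{GW}), a natural transformation out of $T_{p^{k+1}}$ is determined by its value on the generic word $e_1\otimes\cdots\otimes e_{p^{k+1}}\in\gamma_{p^{k+1}}$, so this identity reduces to a finite computation in $\gamma_{p^{k+1}}$ using the combinatorial formula for $H_{p^k}$ (a signed sum over ordered tuples of pairwise disjoint $p^k$-subsets of $\{1,\dots,p^{k+1}\}$) together with the definition of the bracketing operators.

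The only substantive point is this last identity, i.e.\ the behaviour of $H_{p^k}$ under bracketing: one must determine which lower-length terms of $H_{p^k}(e_1\otimes\cdots\otimes e_{p^{k+1}})$ are killed by $\beta_{p^{k+1}}$ and check that the surviving tensor-length-$p$ term assembles into $\beta_p$ of a word in the alphabet $T_{p^k}$. This is the algebraic shadow of the classical formula for the Hopf invariant of an iterated Whitehead product, and the cleanest route is probably to invoke the compatibility of the James-Hopf maps with the bracketing maps already established in \cite{GW,SW2} rather than to reprove it here. Everything else---the three right-hand squares, the two applications of Proposition~\ref{projective}, and the factorization through $\bar L_p$---is routine naturality. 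In particular Lemma~\ref{lemma3.4} is a statement about natural transformations of functors and uses none of the standing hypotheses $V_{\even}=0$, $\dim V=p-1$; it will only be evaluated at $V$ in the sequel.
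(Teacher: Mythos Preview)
Your overall structure is right and matches the paper for the right-hand portion of the diagram and for the construction of $\phi'$ via Proposition~\ref{projective}. The divergence is in how you produce $\phi$.

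The paper does \emph{not} propose an explicit formula for $\phi$. Instead it obtains $\phi$ by exactly the same projective-lifting device it uses for $\phi'$: since $T_p(T_{p^k})=T_{p^{k+1}}$ canonically, the functor $T_{p^{k+1}}$ is trivially $T_{p^{k+1}}$-projective, and $\beta_p\colon T_p(T_{p^k})\twoheadrightarrow L_p(T_{p^k})$ is a natural epimorphism of functors $T_{p^{k+1}}\to T_{p^{k+1}}$-type; so the composite $T_{p^{k+1}}\xrightarrow{\beta_{p^{k+1}}}L_{p^{k+1}}\to L_p(T_{p^k})$ lifts through $\beta_p$ by the same $\Sigma_{p^{k+1}}$-projectivity argument behind Proposition~\ref{projective}. (The paper also first passes to $\Z_{(p)}$ so that all the functors, including $Q^{\max}_{p^{k+1}}$, are defined integrally before base change.) The upshot is that no combinatorial identity whatsoever is needed: the lemma only asserts \emph{existence} of $\phi$, and projective lifting supplies it for free.

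Your route instead fixes $\phi=\mathrm{pr}_p\circ H_{p^k}$ and is then forced to verify $\beta_p\circ\mathrm{pr}_p\circ H_{p^k}=H_{p^k}\circ\beta_{p^{k+1}}$ on $T_{p^{k+1}}$, which you correctly flag as the only substantive point and then defer. This is a genuine identity about James--Hopf maps versus iterated brackets (closer in spirit to \cite{Wu1} than to \cite{GW,SW2}), and it is not a one-liner; in any case it is entirely avoidable here. Your approach would, if completed, give more: an explicit $\phi$ rather than an abstract one. But since the only use of Lemma~\ref{lemma3.4} downstream (in the proof of Theorem~\ref{theorem1.1}) is to know that the middle-row composite evaluated at $V$ lifts through $\beta_p$, the abstract existence is all that is required, and the paper's two-line argument via Proposition~\ref{projective} is the cleaner path.
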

\begin{proof}
Since all functors in the diagram are well-defined over $\Z_{(p)}$,
it suffices to show that the assertion holds when the ground ring is
$\Z_{(p)}$. The top three squares commute because the maps in the
top row are coalgebra maps, which send the primitives to the
primitives. It is clear that the bottom right square commutes. By
Proposition~\ref{projective}, there exist lifting $\phi'$ and $\phi$
such that the bottom left and middle squares commute.
\end{proof}

\begin{lem}\label{lemma3.5}
Let $n\geq 2$ and let $W$ be any graded module. Then the composite
$$
W^{\otimes n}=W^{
\otimes 2}\otimes W^{\otimes n-2}\rOnto^{\beta_n} L_n(W)\rOnto \bar L_n(W)
$$
factors through the quotient
$$
\left(W^{\otimes 2}/\la a_1\otimes a_2+(-1)^{|a_1||a_2|}a_2\otimes
a_1\ra\right)\otimes \Lambda_{n-2}(W)
$$
of $W^{\otimes n}$.
\end{lem}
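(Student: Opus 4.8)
The plan is to read off the lemma directly from the graded Jacobi identity, by recognizing the two families of relations that cut out the target quotient. Write $\psi_n\colon W^{\otimes n}\to\bar L_n(W)$ for the composite in the statement, so $\psi_n(a_1\otimes\cdots\otimes a_n)$ is the class of the left-normed bracket $[[a_1,a_2],a_3,\ldots,a_n]$ modulo $\sum_{i=2}^{n-2}[L_i(W),L_{n-i}(W)]$. First I note that $\psi_n$ automatically factors through $W^{\otimes 2}/\langle a_1\otimes a_2+(-1)^{|a_1||a_2|}a_2\otimes a_1\rangle$ in the first two tensor slots: that quotient is exactly the image $L_2(W)$ of $\beta_2$ by antisymmetry of the Lie bracket, and $\beta_n$ is the composite of $\beta_2\otimes\id_{W^{\otimes(n-2)}}$ with the iterated left bracketing $L_2(W)\otimes W^{\otimes(n-2)}\to L_n(W)$. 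So it suffices to prove that $\psi_n$ factors through $\Lambda_{n-2}(W)$ in the last $n-2$ slots. Since $\Lambda_{n-2}(W)$ is $W^{\otimes(n-2)}$ modulo the submodule spanned by the graded-transposition relations $\cdots\otimes(a\otimes b-(-1)^{|a||b|}b\otimes a)\otimes\cdots$, this amounts to showing that for each $i$ with $3\le i\le n-1$ the class of $[[a_1,a_2],\ldots,a_n]$ in $\bar L_n(W)$ is multiplied by the Koszul sign $(-1)^{|a_i||a_{i+1}|}$ under the transposition $a_i\leftrightarrow a_{i+1}$.

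For this, fix $i$ with $3\le i\le n-1$ and set $y=[[a_1,a_2],a_3,\ldots,a_{i-1}]\in L_{i-1}(W)$, a bracket of tensor length $i-1\ge 2$ (with $y=[a_1,a_2]$ when $i=3$). The graded Jacobi identity, in the form $[[u,v],w]-(-1)^{|v||w|}[[u,w],v]=[u,[v,w]]$, applied with $u=y$, $v=a_i$, $w=a_{i+1}$, gives
$$
[[y,a_i],a_{i+1}]-(-1)^{|a_i||a_{i+1}|}[[y,a_{i+1}],a_i]=[y,[a_i,a_{i+1}]]\in[L_{i-1}(W),L_2(W)].
$$
Bracketing this identity on the right (left-normed) with $a_{i+2},\ldots,a_n$ shows that the difference of $[[a_1,a_2],\ldots,a_n]$ and $(-1)^{|a_i||a_{i+1}|}$ times its $a_i\leftrightarrow a_{i+1}$ transpose lies in the span of all iterated left-normed brackets of $[L_{i-1}(W),L_2(W)]$ against $n-i-1$ copies of $W=L_1(W)$.

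It remains to check the elementary claim that, for $s,t\ge 1$ with $s+t+k=n$, the iterated left-normed bracket of $[L_s(W),L_t(W)]$ against $k$ copies of $L_1(W)$ is contained in $\sum[L_{s'}(W),L_{t'}(W)]$, the sum over $s',t'\ge 1$ with $s'+t'=n$, $s'\ge s$ and $t'\ge t$. This follows by induction on $k$: the case $k=0$ is trivial, and the inductive step rewrites $[[u,v],w]$ with $|u|=s'$, $|v|=t'$, $|w|=1$, via graded Jacobi, as a linear combination of $[u,[v,w]]\in[L_{s'}(W),L_{t'+1}(W)]$ and $[[u,w],v]\in[L_{s'+1}(W),L_{t'}(W)]$, so neither degree decreases. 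Applying this with $s=i-1$ and $t=2$ — legitimate because $i\ge 3$ forces $i-1\ge 2$ — every summand $[L_{s'}(W),L_{t'}(W)]$ that occurs has $2\le s'\le n-2$, hence lies in $\sum_{j=2}^{n-2}[L_j(W),L_{n-j}(W)]$; therefore the transposition difference vanishes in $\bar L_n(W)$, completing the argument. The only place requiring care is the sign bookkeeping in the graded Jacobi identity together with the verification that the decomposables produced stay in the admissible range $2\le j\le n-2$; this is exactly where the hypothesis $i\ge 3$ is used, and it is the reason the transposition involving the slot of $a_2$ is excluded.
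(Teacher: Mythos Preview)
Your argument is correct and fully self-contained; the sign and range checks are all in order, including the borderline cases $n=2,3$ where the set $\{3,\ldots,n-1\}$ is empty and there is nothing to verify beyond antisymmetry in the first two slots.

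The paper proceeds differently. After dispatching the antisymmetry in the first two slots exactly as you do, it identifies $\bigoplus_{n\ge 2}\bar L_n(W)$ with the indecomposables $QB^{[1]}(W)$ of the kernel Hopf algebra in the short exact sequence $B^{[1]}(W)\hookrightarrow T(W)\twoheadrightarrow\Lambda(W)$, and then appeals to \cite[Lemma~3.12]{CMN}, which says that the Lie bracket in $T(W)$ induces an action of $\Lambda(W)$ on $QB^{[1]}(W)$. That statement is precisely the assertion that right-bracketing by elements of $W$ descends, modulo decomposables, to an action of the graded-symmetric quotient $\Lambda(W)$; the lemma follows immediately. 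So the paper's proof is structural and leans on an external reference, while yours unpacks the same phenomenon directly via the Jacobi identity. Your route has the advantage of being elementary and independent of the Hopf-algebra framework; the paper's route has the advantage of placing the lemma inside the general Cohen--Moore--Neisendorfer machinery that is already in play elsewhere in the argument.
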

\begin{proof}
By the skew-symmetric property on the first two factors, the
iterated Lie operad $\beta_n\colon W^{\otimes n}\to L_n(W)$ factors
through the quotient
$$
\left(W^{\otimes 2}/\la a_1\otimes a_2+(-1)^{|a_1||a_2|}a_2\otimes
a_1\ra\right)\otimes W^{\otimes n-2}.
$$
Consider the short exact
sequence of Hopf algebras
$$
B^{[1]}(W)\rInto T(W)\rOnto \Lambda(W).
$$
Note that there is a commutative diagram
\begin{diagram}
\bigoplus_{n=2}^\infty L_n(W)&\rInto& IB^{[1]}(W)\\
\dOnto&&\dOnto\\
\bigoplus_{n=2}^\infty \bar L_n(W)&\rEq& QB^{[1]}(W).\\
\end{diagram}
The assertion follows from~\cite[Lemma 3.12]{CMN} that the Lie
bracket in $T(W)$ induces an action of $\Lambda (W)$ on
$QB^{[1]}(W)=\bigoplus_{n=2}^\infty \bar L_n(W)$.
\end{proof}

Let $W$ be a graded module such that $W_j=0$ for $j\not=1$. Let
$\Sigma_n$ act on $W^{\otimes n}$ by permuting positions in graded
sense. Let $\GL(W_1)$ be the general linear group of the vector
space $W_1$. The Schur algebra~\cite{Schur} is defined by
$$
\calS(W)=\im(\phi\colon \bfk(\GL(W))\to \End(W^{\otimes n})),
$$
where $\phi(f)=f^{\otimes n}.$
By~\cite[Theorem 4.1]{CR},
\begin{equation}\label{equation3.4}
\im(\bfk(\Sigma_n)\to \End (W^{\otimes
n}))=\End_{\calS(W)}(W^{\otimes n})
\end{equation}
if $\bfk$ is algebraically closed. Thus if $\bfk$ is algebraically
closed, then for any $\bfk(\GL(W))$-map $$f\colon W^{\otimes n}\to
W^{\otimes n}$$ there exists an element $\alpha\in \bfk(\Sigma_n)$
such that
$$
f(a_1\otimes\cdots\otimes a_n)=\alpha\cdot (a_1\otimes\cdots\otimes a_n).
$$

\begin{lem}\label{lemma3.6}
Let $\bar V$ be a graded module such that $\dim\bar V_1=p-1$ and
$\bar V_j=0$ for $j\not=1$. Let $\GL(\bar V)$ be the general linear
group with the action on $\bar V$. Assume that the ground field
$\bfk$ is algebraically closed. Then, as a morphism over
$\bfk(\GL(\bar V))$, the composite
$$
q\colon T_p(\bar V)=\bar V^{\otimes p}\rOnto^{\beta_p} L_p(\bar
V)\rOnto \bar L_p(\bar V)\rTo^{\cong} \bar V\otimes E_{p-1}(\bar V)
$$
does NOT have a cross-section.
\end{lem}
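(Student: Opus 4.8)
The plan is to argue by contradiction using the Schur--Weyl picture recalled in \eqref{equation3.4}. Suppose $q\colon \bar V^{\otimes p}\to \bar V\otimes E_{p-1}(\bar V)$ admits a $\bfk(\GL(\bar V))$-equivariant cross-section $\sigma$. Since $\dim\bar V_1=p-1$, the top exterior power $E_{p-1}(\bar V)$ is one-dimensional, so $\bar V\otimes E_{p-1}(\bar V)\cong \bar V$ as $\GL(\bar V)$-modules up to a determinant twist. Thus $\sigma$ would give a $\bfk(\GL(\bar V))$-splitting of $\beta_p$ followed by the quotient onto $\bar L_p(\bar V)$; composing with the inclusion $\bar V\otimes E_{p-1}(\bar V)\hookrightarrow \bar V^{\otimes p}$ we would obtain a $\bfk(\GL(\bar V))$-endomorphism $e=\sigma\circ q$ of $\bar V^{\otimes p}$ that is idempotent (since $q\circ\sigma=\id$) with image a copy of $\bar V\otimes E_{p-1}(\bar V)$ inside $\bar V^{\otimes p}$ that maps isomorphically onto the $\bar L_p$-quotient of $L_p(\bar V)$.

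By \eqref{equation3.4}, because $\bfk$ is algebraically closed and $\dim \bar V_1=p-1<p$ is at least $n=p$\,\dots\ here one must be careful, so the key step is: every $\bfk(\GL(\bar V))$-endomorphism of $\bar V^{\otimes p}$ is realized by the action of some $\alpha\in\bfk(\Sigma_p)$. Hence $e$ is the action of a (necessarily idempotent) element $\alpha\in\bfk(\Sigma_p)$ on $\bar V^{\otimes p}$, and the splitting of the composite $\bar V^{\otimes p}\xrightarrow{\beta_p}L_p(\bar V)\twoheadrightarrow\bar L_p(\bar V)$ comes from an idempotent in the group algebra. Translating back through Proposition~\ref{proposition2.5} (the correspondence between $T_p$-projective subfunctors of $L_p$ and $\bfk(\Sigma_p)$-projective submodules of $\Lie(p)$), such a splitting would exhibit $\bar L_p$ as a $T_p$-projective retract of $L_p$, equivalently $\bar L_p(\bar V)\cong \bar V\otimes E_{p-1}(\bar V)$ would be cut out of $\Lie(p)\otimes_{\bfk(\Sigma_p)}\bar V^{\otimes p}$ by a projective summand of $\Lie(p)$.

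The contradiction then comes from the modular representation theory of $\Sigma_p$ in characteristic $p$: the relevant summand of $\Lie(p)$ that realizes $\bar L_p$ is \emph{not} projective. Concretely, $\Lie(p)$ as a $\bfk(\Sigma_p)$-module is (by Klyachko / the classical computation) induced from a one-dimensional representation of the cyclic group $C_p=\langle(12\cdots p)\rangle$, so its vertex is $C_p$; it has no projective summands at all, and the quotient $\bar\Lie(p)$ corresponding to $\bar L_p$ inherits $C_p$ in its support. A projective $\bfk(\Sigma_p)$-module is free over the Sylow $p$-subgroup $C_p$, so a nonzero projective summand would force $p\mid \dim$ of that summand; but $\dim(\bar V\otimes E_{p-1}(\bar V))=p-1$ for the realization, and more intrinsically $\bar\Lie(p)$ is one-dimensional as the image in question and carries a nontrivial $C_p$-action, which is impossible for anything projective. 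This rules out the cross-section.

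The main obstacle, and the step that needs the most care, is the passage ``$\GL(\bar V)$-equivariant map on $\bar V^{\otimes p}$ $\Rightarrow$ action of an element of $\bfk(\Sigma_p)$'': this requires $\bar V$ to be ``large enough'' for \eqref{equation3.4}/\cite[Theorem 4.1]{CR} to apply, i.e.\ $\dim\bar V_1\geq p$, whereas here $\dim\bar V_1=p-1$. So either one embeds $\bar V$ into a larger module, runs the argument there, and restricts, checking the relevant idempotent is defined over the smaller Schur algebra; or one uses directly that $\End_{\bfk(\GL(\bar V))}(\bar V^{\otimes p})$ still surjects onto the piece of $\bfk(\Sigma_p)$ acting on $\Lie(p)\otimes_{\bfk(\Sigma_p)}\bar V^{\otimes p}=L_p(\bar V)$, which is all that is needed since $q$ factors through $L_p(\bar V)$. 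I expect the cleanest route is the latter: only the quotient $\End(L_p)\cong\End_{\bfk(\Sigma_p)}(\Lie(p))$ of Proposition~\ref{proposition2.2} is relevant, and one checks the hypothetical cross-section would land inside this quotient, reducing the whole question to: \emph{$\bar\Lie(p)$ is not a projective $\bfk(\Sigma_p)$-module}, which is immediate from its $C_p$-vertex.
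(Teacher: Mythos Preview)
Your overall strategy---reduce the question to a projectivity obstruction for a $\bfk(\Sigma_p)$-module---is genuinely different from the paper's direct computation, but as written it has a real gap that you flag yourself and do not close.

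First, your worry about Schur--Weyl is misplaced: the double-centralizer statement \eqref{equation3.4}, namely $\End_{\GL(\bar V)}(\bar V^{\otimes p})=\im(\bfk(\Sigma_p))$, holds without any hypothesis $\dim\bar V\geq p$, and the paper invokes it in exactly this situation. What fails is the \emph{next} step. You obtain $\alpha\in\bfk(\Sigma_p)$ with $\alpha\cdot{}=\sigma\circ q$ on $\bar V^{\otimes p}$, but because $\dim\bar V=p-1<p$ the map $\bfk(\Sigma_p)\to\End(\bar V^{\otimes p})$ has nontrivial kernel, so $\alpha$ need not be idempotent in $\bfk(\Sigma_p)$ and there is no mechanism here to lift it to one. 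Hence you cannot conclude that $\alpha$ cuts out a projective $\bfk(\Sigma_p)$-summand of $\gamma_p$ or of $\Lie(p)$. Proposition~\ref{proposition2.5} does not bridge the gap either: that correspondence is between \emph{functorial} ($T_p$-projective) subfunctors of $L_p$ and projective $\bfk(\Sigma_p)$-submodules of $\Lie(p)$, whereas your $\sigma$ is only $\GL(\bar V)$-equivariant for this single $(p-1)$-dimensional $\bar V$. Your proposed ``latter route'' through $\End(L_p)\cong\End_{\bfk(\Sigma_p)}(\Lie(p))$ (Proposition~\ref{proposition2.2}) has the same defect: that isomorphism concerns \emph{natural} endomorphisms of the functor $L_p$, not $\End_{\GL(\bar V)}(L_p(\bar V))$ for $\dim\bar V=p-1$. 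Finally, the phrase ``$\bar\Lie(p)$ is one-dimensional as the image in question and carries a nontrivial $C_p$-action'' is confused: in characteristic $p$ the only one-dimensional $\bfk C_p$-module is trivial, and you seem to be conflating the specific $\bar L_p(\bar V)$ (which has dimension $p-1$) with a universal $\Sigma_p$-module.

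For comparison, the paper's proof uses \eqref{equation3.4} much more lightly: only to deduce that the hypothetical section, composed with $q$, is automatically equivariant for \emph{every} Steenrod-module structure one puts on $\bar V$. It then chooses a particular such structure, pins down the coefficients of $\pi\circ\phi(x_1\otimes(x_1\wedge\cdots\wedge x_{p-1}))$ via symmetry and a single Steenrod relation, and computes by hand that $q\circ\phi$ on this element equals $p$ times a basis vector, hence vanishes---contradicting $q\circ\phi=\id$. No idempotent lifting, no functoriality, and no projectivity statement is required.
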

\begin{proof}
Suppose that there exists a $\bfk(\GL(\bar V))$-map $\phi\colon \bar
V\otimes E_{p-1}(\bar V)\rTo \bar V^{\otimes p}$ such that the
composite $q\circ\phi$ is the identity map of $\bar V\otimes
E_{p-1}(\bar V)$. We are going to find a contradiction.

Let $\{x_1,x_2,\ldots,x_{p-1}\}$ be a basis for $\bar V$. Note that
$$\dim E_{p-1}(\bar V)=1$$ with a basis $\{(x_1\wedge x_2\wedge \cdots \wedge x_{p-1})\}$.
The set
$$
\{x_i\otimes (x_1\wedge x_2\wedge \cdots\wedge x_{p-1}) \ | \ 1\leq
i\leq p-1\}
$$
forms a basis for $\bar V\otimes E_{p-1}(\bar V)$. Since the
composite
$$
\bar V^{\otimes p} \rOnto^q \bar V\otimes E_{p-1}(\bar V)\rTo^\phi
\bar V^{\otimes p}
$$
is an endomorphism over $\bfk(\GL(\bar V))$, from
equation~(\ref{equation3.4}) there exists an element $\alpha\in
\bfk(\Sigma_p)$ such that
\begin{equation}\label{equation3.5}
\phi\circ q(w)=\alpha\cdot w
\end{equation}
for $w\in \bar V^{\otimes p}$. By counting the occurrence of
$x_j$'s, $\phi(x_i\otimes (x_1\wedge \cdots \wedge x_{p-1}))$ is a
linear combination of monomials $x_{i_1}x_{i_2}\cdots x_{i_p}$ in
which $x_i$ occurs twice and $x_j$ occurs once for each $j\not=i$.
By Lemma~\ref{lemma3.5}, there is a commutative diagram
\begin{diagram}
\bar V^{\otimes p}&\rOnto^{\beta_p}&&L_p(\bar V)\\
\dOnto>{\pi}&&&\dOnto>{\bar\pi}\\
\left( \bar V^{\otimes 2}/\la x_ix_j-x_jx_i\ra\right)\otimes
E_{p-2}(\bar V)&\rTo^g& &\bar L_p(\bar V)=\bar V\otimes E_{p-1}(\bar V).\\
\end{diagram}
By counting the occurrence of $x_j$'s,
\begin{equation}\label{equation3.6}
\begin{array}{c}
\pi\circ\phi(x_1\otimes (x_1\wedge x_2\wedge \cdots \wedge
x_{p-1}))\\
=k_1x_1^2\otimes (x_2\wedge x_3\wedge \cdots\wedge x_{p-1})\\
+\sum_{i=2}^{p-1}k_ix_1x_i\otimes (x_1\wedge\cdots\wedge
x_{i-1}\wedge x_{i+1}\wedge\cdots \wedge x_{p-1}).\\
\end{array}
\end{equation}
for some $k_i\in \bfk$. Note that $x_i\wedge x_j=-x_j\wedge x_i$. By
interchanging $x_i$ and $x_{i+1}$, there is an equation
$$
k_i=-k_{i+1}
$$
for $i\geq 2$. By equation~(\ref{equation3.5}), the map
$$
\pi\circ\phi\colon \bar V\otimes E_{p-1}(\bar V)\rTo \left( \bar
V^{\otimes 2}/\la x_ix_j-x_jx_i\ra\right)\otimes E_{p-2}(\bar V)
$$
is a morphism over the Steenrod algebra by regarding $\bar V$ as
\textit{any graded module over the Steenrod algebra} with $\bar
V_{\textrm{even}}=0$. Take the Steenrod module structure on $\bar V$
by setting $P^1_*x_1=x_2$, $P^i_*x_1=0$ for $i>1$ and $P^k_*x_j=0$
for $k\geq 1$ and $j>1$. Then
$$
\begin{array}{l}
P^2_*x_1x_i\otimes (x_1\wedge\cdots\wedge x_{i-1}\wedge
x_{i+1}\wedge\cdots \wedge x_{p-1})\\ =x_2x_i\otimes
(x_2\wedge\cdots\wedge x_{i-1}\wedge x_{i+1}\wedge\cdots \wedge
x_{p-1})\\
=0\\
\end{array}
$$
for $i\geq3$.
Note that
$$
P^2_*x_1\otimes (x_1\wedge x_2\wedge \cdots \wedge
x_{p-1})=x_2\otimes (x_2\wedge x_2\wedge \cdots \wedge x_{p-1})=0.
$$
By applying $P^2_*$ to equation~(\ref{equation3.6}),
$$
\begin{array}{lll}
0&=&P^2_*(k_1x_1^2\otimes (x_2\wedge x_3\wedge \cdots\wedge
x_{p-1})+k_2x_1x_2\otimes (x_1\wedge x_3\wedge \cdots\wedge
x_{p-1}))\\
&=&k_1x_2^2\otimes (x_2\wedge x_3\wedge \cdots\wedge x_{p-1})+k_2
x_2^2 \otimes (x_2\wedge x_3\wedge \cdots\wedge x_{p-1}).\\
\end{array}
$$
Thus
$$
k_1=-k_2
$$
and so
$$
\pi\circ\phi(x_1\otimes (x_1\wedge x_2\wedge \cdots \wedge
x_{p-1}))= k_1 \sum_{i=1}^{p-1}(-1)^{i-1}x_1x_i\otimes
(x_1\wedge\cdots\wedge x_{i-1}\wedge x_{i+1}\wedge\cdots \wedge
x_{p-1}).
$$

Define
$$
\alpha_1= x_1\otimes x_1\otimes x_2\otimes x_3\otimes \cdots \otimes
x_{p-1}\in \bar V^{\otimes p}
$$
$$
\alpha_i= x_1\otimes x_i\otimes x_1\otimes \cdots \otimes
x_{i-1}\otimes x_{i+1}\otimes\cdots \otimes x_{p-1}\in \bar
V^{\otimes p}
$$
for $2\leq i\leq p-1$. Then
$$
\begin{array}{ll}
\bar\pi\beta_p(\alpha_1)&=\bar\pi([x_1,x_1],\ldots,x_{p-1}])\\
 &=2x_1\otimes (x_1\wedge x_2\wedge x_3\wedge\cdots\wedge x_{p-1}).\\
\end{array}
$$
For $i\geq 2$,
$$
\begin{array}{ll}
\bar\pi\beta_p(\alpha_i)&=\bar\pi([x_1,x_i],x_1],\ldots,x_{i-1}],x_{i+1}],x_{p-1}])\\
 &=(-1)^{i-1}x_1\otimes (x_1\wedge x_2\wedge x_3\wedge\cdots\wedge x_{p-1}).\\
\end{array}
$$
It follows that
$$
\begin{array}{l}
q\circ \phi(x_1\otimes (x_1\wedge x_2\wedge \cdots \wedge
x_{p-1}))\\
= g\circ \pi\circ \phi(x_1\otimes (x_1\wedge x_2\wedge \cdots \wedge
x_{p-1}))\\
=k_1 g(\sum\limits_{i=1}^{p-1}(-1)^{i-1}x_1x_i\otimes
(x_1\wedge\cdots\wedge x_{i-1}\wedge x_{i+1}\wedge\cdots \wedge
x_{p-1}))\\
=k_1 g(\sum\limits_{i=1}^{p-1}(-1)^{i-1}\pi(\alpha_i))\\
=k_1\sum\limits_{i=1}^{p-1}(-1)^{i-1}g\circ\pi(\alpha_i)\\
=k_1(\bar\pi\beta_p(\alpha_1)+\sum\limits_{i=2}^{p-1}(-1)^{i-1}\bar\pi\beta_p(\alpha_i)\\
=pk_1x_1\otimes (x_1\wedge x_2\wedge x_3\wedge\cdots\wedge
x_{p-1})\\
=0,\\
\end{array}
$$
which contradicts to the assumption that $q\circ \phi$ is the
identity map. This finishes the proof.
\end{proof}

\begin{proof}[Proof of Theorem~\ref{theorem1.1}]
By Proposition~\ref{proposition3.3}, it suffices to show that the
epimorphism
$$
\bfk\otimes_{B(V)}T(V)\rTo A^{\min}(V)
$$
is an isomorphism.

Since the functor $A^{\min}$ only depends on the characteristic of
the ground field, we may assume that the ground field $\bfk$ is
algebraically closed. Denote by $\phi_k$ the epimorphism $
\bfk\otimes_{B(V)}T(V)\rTo A^{\min}(V) $. By
Proposition~\ref{proposition3.3},
$$
\bar L_p^k(V)=P_{p^k}(\bfk\otimes_{B(V)}T(V))
$$
for each $k\geq 0$. By equation~(\ref{equation3.1}), $A^{\min}(V)$
is functorial coalgebra retract of $\bfk\otimes_{B(V)}T(V)$. It
suffices to show by induction that the retraction map
$$
\phi_k|_{\bar L_p^k(V)}\colon \bar
L_p^k(V)=P_{p^k}(\bfk\otimes_{B(V)}T(V))\to P_{p^k}A^{\min}(V)
$$
is an isomorphism for each $k\geq0$. The statement holds for $k=0$.
Suppose that the statement holds for $s\leq k$ with $k>0$ and
consider
$$
\phi_{k+1}|_{\bar L_p^{k+1}(V)}\colon \bar
L_p^{k+1}(V)=P_{p^{k+1}}(\bfk\otimes_{B(V)}T(V))\to
P_{p^{k+1}}A^{\min}(V).
$$

By choosing $V$ to a module over the Steenrod algebra with a basis
given by $\{v,P^1_*v,P^2_*v,\ldots,P^{p-2}_*v\}$, then $V$ is an
indecomposable module over the Steenrod algebra. By
Lemma~\ref{lemma3.2}, $\bar L_p^{k+1}(V)$ is a suspension of $V$ as
a module over the Steenrod algebra. Thus $\bar L_p^{k+1}(V)$ is an
indecomposable module over the Steenrod algebra. It follows that the
retraction
$$
\phi_{k+1}|_{\bar L_p^{k+1}(V)}\colon \bar
L_p^{k+1}(V)=P_{p^{k+1}}(\bfk\otimes_{B(V)}T(V))\to
P_{p^{k+1}}A^{\min}(V)
$$
is either identically zero or an isomorphism.

Suppose that $\phi_{k+1}|_{\bar L_p^{k+1}(V)}=0$. Then
$$
P_{p^{k+1}}A^{\min}(V)=0
$$
and so
$$
L_{p^{k+1}}(V)\subseteq B^{\max}(V).
$$
By Lemma~\ref{lemma3.2}, $B^{[k+1]}(V)$ is the sub Hopf algebra of
$T(V)$ generated by $\bar L_p^{k+1}(V)$ and $L_n(V)$ for $n$ not a
power of $p$. It follows that
\begin{equation}\label{equation3.7}
B^{[k+1]}(V)\subseteq B^{\max}(V)
\end{equation}
and so
$$
PA^{\min}(V)\subseteq \bigoplus_{s=1}^k \bar L_p^s(V).
$$
From the induction. $\bar L^s_p(V)\subseteq A^{\min}(V)$ for each
$s\leq k$. Thus
$$
PA^{\min}(V)= \bigoplus_{s=1}^k \bar L_p^s(V).
$$
It follows that there is a (\textit{non-functorial}) isomorphism of
coalgebra
$$
A^{\min}(V)\cong \bigotimes_{s=0}^k E(\bar L_p^s (V)).
$$
By computing Poincar\'e series, the inequality~(\ref{equation3.7})
becomes the equality $$B^{[k+1]}(V)=B^{\max}(V).$$ Thus
$$
Q^{\max}_{p^{k+1}}(V)=Q_{p^{k+1}}B^{\max}(V)=Q_{p^{k+1}}B^{[k+1]}(V)\cong
\bar L_p^{k+1}(V).
$$
Observe that the inclusion
$$
\bar L_p^k(V)=P_{p^k}A^{\min}(V)\rInto A^{\min}_{p^k}(V)
$$
is an isomorphism and $\bar L_p^{k+1}(V)=\bar L_p(\bar L_p^k(V))$.
The composite of natural transformations
\begin{equation}\label{equation3.8}
Q^{\max}_{p^{k+1}}\rInto L_{p^{k+1}}\rTo L_p(T_{p^k}) \rTo^{L_p(r)}
L_p(A^{\min}_{p^k}) \rOnto \bar L_p(A^{\min}_{p^k})
\end{equation}
becomes an isomorphism by evaluating on the graded module $V$. Write
$\bar V$ for $ \bar L_p^k(V)=P_{p^k}A^{\min}(V)= A^{\min}_{p^k}(V).$
By Lemma~\ref{lemma3.4}, the composite of the first three natural
transformations in equation~(\ref{equation3.8}) admits a lifting of
natural transformation into $T_p(A^{\min}_{p^k})$ via the
epimorphism $\beta_p\colon T_p(A^{\min}_{p^k})\rOnto
L_p(A^{\min}_{p^k})$ and so the composite
$$
q\colon T_p(\bar V)=\bar V^{\otimes p}\rOnto^{\beta_p} L_p(\bar
V)\rOnto \bar L_p(\bar V)\rTo^{\cong} \bar V\otimes E_{p-1}(\bar V)
$$
admits a cross-section over the general linear group algebra
$\bfk(\GL(\bar V))$ by forgetting the grading of $\bar V$. This is
impossible according to Lemma~\ref{lemma3.6}.

Thus $\phi_{k+1}|_{\bar L_p^{k+1}(V)}\not=0$ and so
$\phi_{k+1}|_{\bar L_p^{k+1}(V)}$ must be an isomorphism. The
induction is finished and hence the result.
\end{proof}

\section{The Geometry of Natural Coalgebra Decompositions}\label{section4}
\subsection{Geometric Realizations}
Denote by $\CoH$ the category of $p$-local simply connected
co-$H$-spaces of finite type and co-$H$-maps. Let
$[\Omega,\Omega]_{\CoH}$ be the group of natural transformations of
the functor $\Omega$ from $\CoH$ to the homotopy category of spaces.

\begin{thm}[Geometric Realization Theorem]~\cite{STW2}\label{theorem4.1}
Let $Y$ be any simply connected co-$H$-space of finite type and let

$$
T(V)\cong A(V)\otimes B(V)
$$
any natural coalgebra decomposition for ungraded modules over
$\Z/p$. Then there exist homotopy functors $\bar A$ and $\bar B$
from $\CoH$ to spaces such that
\begin{enumerate}
\item there is a functorial decomposition
$$
\Omega Y\simeq \bar A(Y)\times \bar B(Y)
$$
\item On mod $p$ homology the decomposition $$H_*(\Omega Y)\cong H_*(\bar
A(Y))\otimes H_*(\bar B(Y))$$ is with respect to the augmentation
ideal filtration
\item On mod $p$ homology
$$
E^0H_*(\bar A(Y))=A(\Sigma^{-1}\bar H_*(Y)) \textrm{ and }
E^0_*(\bar B(Y))=B(\Sigma^{-1}\bar H_*(Y)),
$$
where $A$ and $B$ are the canonical extensions of the functors $A$
and $B$ for graded modules\hfill $\Box$
\end{enumerate}
\end{thm}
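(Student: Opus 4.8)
The plan is to realise the purely algebraic coalgebra splitting $T\cong A\otimes B$ by a pair of natural idempotent self-transformations of the functor $\Omega$ on $\CoH$, and then to reassemble $\Omega Y$ from their images using the loop multiplication. The homological input is that for a $p$-local simply connected co-$H$ space $Y$ of finite type the associated graded of $H_*(\Omega Y;\Z/p)$ for the augmentation ideal filtration is a tensor Hopf algebra; more precisely a Bott--Samelson/James computation (using that the reduced diagonal of a co-$H$ space is trivial, so that the loop homology carries no relations) gives a natural isomorphism
\[
E^0H_*(\Omega Y;\Z/p)\cong T\bigl(\Sigma^{-1}\bar H_*(Y;\Z/p)\bigr),
\]
with $E^0_1$ the image of the homology suspension. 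So it suffices to build, functorially in $Y\in\CoH$, retracts $\bar A(Y)$ and $\bar B(Y)$ of $\Omega Y$ realising on associated graded mod $p$ homology the summands $A(\Sigma^{-1}\bar H_*Y)$ and $B(\Sigma^{-1}\bar H_*Y)$ of $T(\Sigma^{-1}\bar H_*Y)$.

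\textbf{Geometric realisation of the algebraic idempotent.} By the representation theory of Section~\ref{section2} together with \cite{GW,SW1}, the natural coalgebra decomposition $T\cong A\otimes B$ over $\Z/p$ is encoded by a natural idempotent $e\colon T\to T$ of coalgebra functors, together with its complement, equivalently by compatible idempotents in the rings $\End(T_n,T_n)\cong\End_{\Z/p\Sigma_n}(\gamma_n)\cong\Z/p[\Sigma_n]$. The crux is to show that these idempotents lie in the image of the endomorphism monoid $[\Omega,\Omega]_{\CoH}$ acting on $E^0H_*(-;\Z/p)$. I would produce the needed natural self-transformations of $\Omega$ from three sources: the loop multiplication on $\Omega Y$; James--Hopf-type maps (co-$H$ analogues of $\Omega\Sigma X\to\Omega\Sigma X^{\wedge n}$) built from the iterated co-$H$ coaction $Y\to Y\vee\cdots\vee Y$ followed by the pinch projections onto smash summands, as in \cite{STW2}; and addition of self-maps, available because the source is a co-$H$ space and the target an $H$-space. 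A bookkeeping argument with augmentation filtration degrees then shows that the sub-monoid so generated surjects, after passing to $E^0H_*(-;\Z/p)$ in each tensor length $n$, onto the operations coming from $\Z/p[\Sigma_n]$; in particular some natural transformation $\tilde e$ realises the algebraic idempotent $e$ \emph{modulo higher augmentation filtration}, and likewise for its complement.

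\textbf{Strict idempotents and the product splitting.} Because $\tilde e$ is idempotent only modulo higher filtration, I would pass to a genuine idempotent by a convergence argument. Since $Y$ is $p$-local of finite type, $\Omega Y$ is a colimit of its finite-type subcomplexes and the tower of homotopy sets approximating $[\Omega Y,\Omega Y]$ by finite data is a tower of finitely generated $\Z_{(p)}$-modules, nilpotent in the relevant range and Mittag--Leffler; a standard argument (as in \cite{SW1}) then replaces $\tilde e$ by a natural idempotent $\tilde e_\infty\colon\Omega\to\Omega$ with $E^0(\tilde e_\infty)_*=e$, and similarly gives $\tilde e'_\infty$ for the complementary summand. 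Let $\bar A(Y)$ and $\bar B(Y)$ be the mapping telescopes (homotopy images) of $\tilde e_\infty$ and $\tilde e'_\infty$; these are functorial retracts of $\Omega Y$ with $E^0H_*(\bar A(Y);\Z/p)=\im(e)=A(\Sigma^{-1}\bar H_*Y)$ and $E^0H_*(\bar B(Y);\Z/p)=B(\Sigma^{-1}\bar H_*Y)$, which is statement~(3). Finally the two retractions assemble to a natural map $\Omega Y\to\bar A(Y)\times\bar B(Y)$ which, by the algebraic splitting principle underlying the decomposition (cf.\ \cite[Lemma 5.3]{SW1}), induces an isomorphism on mod $p$ homology and is therefore a ($p$-local) homotopy equivalence; this is statement~(1), and by construction the resulting tensor factorisation of $H_*(\Omega Y;\Z/p)$ is the one associated to the augmentation ideal filtration, which is statement~(2).

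\textbf{Main obstacle.} The two genuinely hard steps are the realisation step and the convergence step. In the realisation step one must verify that the co-$H$ James--Hopf maps together with addition are rich enough to hit every idempotent of $\Z/p[\Sigma_n]$ on associated graded homology — this is where one cannot simply invoke $\Omega\Sigma X$ and its James filtration but must argue with the co-$H$ coaction of $Y$, and where the reduction $\Z_{(p)}\to\Z/p$ has to be reconciled with the fact that stable maps only carry a $\Z$-module structure. In the convergence step one must control the finiteness and Mittag--Leffler properties of the tower of self-map sets precisely enough that the approximate idempotents converge; both points are settled in \cite{SW1} for suspensions and extended to $\CoH$ in \cite{STW2}.
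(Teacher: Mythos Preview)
The paper does not prove Theorem~\ref{theorem4.1}: it is quoted verbatim from~\cite{STW2} and closed with a $\Box$, with no argument supplied. So there is no ``paper's own proof'' to compare your proposal against; the theorem functions here purely as a black box imported from the cited reference.

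That said, your sketch is broadly in line with how the result is actually established in~\cite{STW2} (building on~\cite{SW1,STW}): one identifies $E^0H_*(\Omega Y)\cong T(\Sigma^{-1}\bar H_*(Y))$ via a co-$H$ analogue of the James/Bott--Samelson theorem, realises the algebraic coalgebra idempotent by natural self-transformations of $\Omega$ on $\CoH$, passes to a genuine idempotent by a telescope/convergence argument, and multiplies the resulting retracts back together. Your identification of the two genuinely delicate points---producing enough natural self-maps of $\Omega Y$ when $Y$ is merely co-$H$ (not a suspension), and the finiteness/Mittag--Leffler input needed for convergence---is accurate. One small correction: the isomorphism $\End_{\Z/p(\Sigma_n)}(\gamma_n)\cong\Z/p[\Sigma_n]$ you invoke is only literally true because $\gamma_n$ is the regular representation; in any case what is actually used is the identification $\End(T_n)\cong\End_{\Z/p(\Sigma_n)}(\gamma_n)$ of equation~(\ref{theta}), and one realises the specific idempotent at hand rather than ``every idempotent of $\Z/p[\Sigma_n]$''.
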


For the functors $A^{\min}$ and $B^{\max}$, we have the following geometric realization theorem.

\begin{thm}~\cite{STW2}\label{theorem4.2}
There exist homotopy functors $\bar A^{\min}$, $\bar Q^{\max}_n$, $n\geq2$,
from $\CoH$ to spaces such that for any $p$-local simply connected
co-$H$ space $Y$ of finite type the following hold:
\begin{enumerate}
\item[1)] $\bar B^{\max}(Y)=\Omega\left(\bigvee_{n=2}^\infty \bar Q^{\max}_n(Y)\right)$.
\item[2)] There is a functorial decomposition
$$
\Omega Y\simeq \bar A^{\min}(Y)\times \bar B^{\max}(Y).
$$
\item[3)] On mod $p$ homology the decomposition
$$ H_*(\Omega Y)\cong H_*(\bar A^{\min}(Y))\otimes H_*(\bar B^{\max}(Y))$$ is with
respect to the augmentation ideal filtration.
\item[4)]On mod $p$ homology
$$
\begin{array}{c}
E^0H_*(\bar A^{\min}(Y))=A^{\min}(\Sigma^{-1}\bar H_*(Y)),\\
E^0H_*(\bar B^{\max}(Y))=B^{\max}(\Sigma^{-1}\bar H_*(Y)),\\
E^0\bar H_*(\bar Q^{\max}(Y))=Q^{\max}(\Sigma^{-1}\bar H_*(Y)).\\
\end{array}
$$\hfill $\Box$
\end{enumerate}
\end{thm}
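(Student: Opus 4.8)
The plan is to derive the statement from the Geometric Realization Theorem~\ref{theorem4.1} together with the algebraic structure of $B^{\max}$ recorded in Subsection~\ref{subsection2.1}. Since $T(V)\cong B^{\max}(V)\otimes A^{\min}(V)$ is a natural coalgebra decomposition for ungraded modules over $\Z/p$, Theorem~\ref{theorem4.1} applied with $A=A^{\min}$ and $B=B^{\max}$ at once produces homotopy functors $\bar A^{\min}$ and $\bar B^{\max}$ on $\CoH$ satisfying assertion (2), assertion (3), and the first two displayed lines of assertion (4). It remains to build the homotopy functors $\bar Q^{\max}_n$, to verify the last line of (4), and to identify $\bar B^{\max}(Y)$ with $\Omega\bigl(\bigvee_{n\geq 2}\bar Q^{\max}_n(Y)\bigr)$.

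First I would realize the functors $Q^{\max}_n$ geometrically. Algebraically $Q^{\max}_n=Q_nB^{\max}$ is a $T_n$-projective functor, and, by the structure theory of~\cite{SW1}, $B^{\max}$ is, with respect to the tensor-length grading, a tensor algebra on its indecomposables: there is a natural isomorphism $B^{\max}(V)\cong T(Q^{\max}(V))$ with $Q^{\max}(V)=\bigoplus_{n\geq 2}Q^{\max}_n(V)$, compatible with the coalgebra structures once a natural lift $Q^{\max}(V)\hookrightarrow B^{\max}(V)$ into the primitives is fixed. Geometrically the functor $T_n$ is modelled, for $Y\in\CoH$, by a co-$H$ space $W_n(Y)$ (the $n$-th James-type filtration quotient associated to $Y$) whose mod $p$ homology realizes $T_n$ evaluated on $\Sigma^{-1}\bar H_*(Y)$; the idempotent-lifting technique that underlies the geometric realization of $A^{\min}$ (see~\cite{SW1,SW2} and Theorem~\ref{theorem4.1}) then carries the natural coalgebra retraction $T_n\twoheadrightarrow Q^{\max}_n\hookrightarrow T_n$ to a natural co-$H$-space retract, producing homotopy functors $\bar Q^{\max}_n\colon\CoH\to\mathrm{Spaces}$ such that each $\bar Q^{\max}_n(Y)$ is a co-$H$ space and $E^0\bar H_*(\bar Q^{\max}_n(Y))=Q^{\max}_n(\Sigma^{-1}\bar H_*(Y))$. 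Setting $\bar Q^{\max}=\bigvee_{n\geq 2}\bar Q^{\max}_n$ gives the last line of (4).

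Next I would establish (1). Using the geometric James-Hopf maps $H_n\colon\Omega Y\to\Omega W_n(Y)$ available on $\CoH$, the natural retractions $W_n(Y)\to\bar Q^{\max}_n(Y)$, and the $H$-space structure on the retract $\bar B^{\max}(Y)$ of $\Omega Y$, one assembles a natural \emph{total James-Hopf} map
$$
\varphi_Y\colon\bar B^{\max}(Y)\ \longrightarrow\ \Omega\Bigl(\bigvee_{n\geq 2}\bar Q^{\max}_n(Y)\Bigr).
$$
On mod $p$ homology $\varphi_Y$ realizes, on the associated graded of the augmentation ideal filtration, the canonical isomorphism $B^{\max}(\Sigma^{-1}\bar H_*(Y))\xrightarrow{\ \cong\ }T\bigl(Q^{\max}(\Sigma^{-1}\bar H_*(Y))\bigr)$ supplied by the tensor-algebra structure of $B^{\max}$ together with the computation of $E^0\bar H_*(\bar Q^{\max}_n(Y))$ above. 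Since the source and the target are $p$-local $H$-spaces of finite type, and since a filtered map of augmentation-ideal-filtered homologies which is an isomorphism on $E^0$ is, in each (finitely filtered) homological degree, an isomorphism, $\varphi_Y$ induces an isomorphism on mod $p$ homology and is therefore a homotopy equivalence; naturality of all the ingredients promotes $\varphi$ to an equivalence of homotopy functors, which is assertion (1).

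The step I expect to be the main obstacle is the passage from the algebraic, associated-graded data to honest homotopy. Two points carry the weight. First, the geometric realization of the $T_n$-projective functor $Q^{\max}_n$ must be performed \emph{naturally over all of} $\CoH$ and must land in co-$H$ spaces with the advertised homology, which is exactly where the idempotent-splitting machinery behind the realization of $A^{\min}$ is needed. Second, the total James-Hopf map $\varphi_Y$ must be shown to be filtration-preserving and to induce the stated $E^0$-isomorphism, so that it is forced to be a homology isomorphism with no residual extension problems; the bookkeeping that matches $\bar B^{\max}$ as produced abstractly by Theorem~\ref{theorem4.1} with its concrete James-Hopf decomposition into the $\bar Q^{\max}_n$ is the technical heart. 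Everything else is a formal consequence of Theorem~\ref{theorem4.1} and the known algebra of $A^{\min}$ and $B^{\max}$.
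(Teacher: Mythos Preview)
The paper does not give its own proof of this theorem: it is stated with the citation \cite{STW2} and closed with a $\Box$, so within this article it functions as a quoted result. There is therefore no in-paper argument to compare your proposal against.

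That said, your outline is the expected shape of the argument and is broadly consistent with how the result is obtained in the cited work: one first invokes the general Geometric Realization Theorem~\ref{theorem4.1} for the pair $(A^{\min},B^{\max})$ to get (2), (3), and the first two lines of (4), then realizes each $Q^{\max}_n$ as a functorial co-$H$ retract of the $n$th smash-piece to obtain $\bar Q^{\max}_n$, and finally identifies $\bar B^{\max}(Y)$ with a loop space on the wedge. One caution: the step ``$B^{\max}(V)\cong T(Q^{\max}(V))$ naturally'' is doing real work and is not just formal structure theory; in \cite{SW1,STW2} this is handled by exhibiting $B^{\max}$ as a sub Hopf algebra of $T(V)$ generated by an explicit $T_n$-projective submodule of primitives (so that it is free as an algebra on those primitives), and the geometric identification in (1) is carried out by constructing a map $\bigvee_{n\ge2}\bar Q^{\max}_n(Y)\to Y$ whose homotopy fibre is $\bar A^{\min}(Y)$, rather than by assembling a ``total James--Hopf'' map out of $\bar B^{\max}(Y)$. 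Your James--Hopf approach could be made to work, but the filtration-preserving and $E^0$-isomorphism verifications you flag are genuinely delicate, and the fibre-sequence route in \cite{STW2} sidesteps them.
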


\subsection{Suspension Splitting Theorems}
In this subsection, we review the suspension splitting
theorems.

A \textit{graded space} means a space $W$ with a homotopy
decomposition
$$
\phi_W\colon W\rTo^{\simeq}\bigvee_{n=1}^\infty W_n.
$$
For any graded space $W$, the homology $\bar H_*(W)$ is filtered by
$$
I^t\bar H_*(W)=\phi_*^{-1}(\bar H_*(\bigvee_{n=t}^\infty W_n))
$$
for $t\geq 1$. A \textit{graded co-$H$ space} means a graded space
$W$ such that $W$ is a co-$H$ space. Thus each factor $W_n$ is also
a co-$H$ space. The following lemma gives a general criterion for
decomposing the retracts of graded co-$H$ spaces in term of grading
factors.

\begin{lem}~\cite{GTW}\label{lemma4.4}
Let $W$ be a simply connected $p$-local graded co-$H$ space of
finite type. Let $f\colon W\to W$ be a self-map such that on mod $p$
homology
\begin{enumerate}
\item[1)] $ f_*\colon \bar H_*(W)\to \bar H_*(W)$
preserves the filtration.
\item[2)] The induced bigraded map $E^0f_*$ is an idempotent:
$$E^0f_*\circ E^0f_*=E^0f_*\colon E^0H_*(W)\to E^0H_*(W).$$
\end{enumerate}
Let $A(f)=\hocolim_f W$ be the homotopy colimit and let
$A_n(f)=\hocolim_{g_n}W_n$, where $g_n$ is the composite
$$
g_n\colon W_n\rInto \bigvee_{k=1}^\infty W_k\rTo^{\phi_W^{-1}}
W\rTo^f W\rTo^{\phi_W} \bigvee_{k=1}^\infty W_k \rOnto W_n.
$$
Then there is a canonical homotopy decomposition of the homotopy
colimit
$$
A(f)\simeq \bigvee_{n=1}^\infty A_n(f)
$$
such that
$$
\bar H_*(A_n(f))\cong \im(E^0_nf_*\colon E^0H_*(W)\cong \bar
H_*(W_n)\to E^0H_*(W)\cong \bar H_*(W_n)).
$$
\hfill $\Box$
\end{lem}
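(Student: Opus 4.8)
The plan is to compute the homology of everything in sight, show that $\bar H_*(A(f))$ and $\bigoplus_n\bar H_*(A_n(f))$ agree, and then promote this identification to a genuine homotopy equivalence using the co-$H$ hypothesis.

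First I would realize $A(f)$ concretely as the mapping telescope of $W\xrightarrow{f}W\xrightarrow{f}\cdots$; since each stage is $p$-local, simply connected and of finite type, so is $A(f)$, and likewise each $A_n(f)$ (the wedge summands $W_n$ of a simply connected wedge are themselves simply connected). Homology commutes with this sequential homotopy colimit, so $\bar H_*(A(f))=\colim\bigl(f_*\colon\bar H_*(W)\to\bar H_*(W)\bigr)$. The filtration $I^{\bullet}\bar H_*(W)$ has finite length in each degree (finite type), is preserved by $f_*$ by hypothesis~(1), and its associated graded $E^0f_*=\bigoplus_n E^0_nf_*$ is idempotent by hypothesis~(2). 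Since filtered directed colimits are exact, passing to associated graded commutes with $\colim$, giving $\operatorname{gr}\bar H_*(A(f))\cong\bigoplus_n\colim(E^0_nf_*)=\bigoplus_n\im(E^0_nf_*)=\im(E^0f_*)$, the middle equality because a directed colimit along an idempotent is its image. Unwinding the definition of $g_n$ and using once more that $f_*$ preserves $I^{\bullet}$, one sees that $g_n$ induces on homology exactly the diagonal block $E^0_nf_*$; hence $\bar H_*(A_n(f))=\colim(E^0_nf_*)=\im(E^0_nf_*)$, and the sought decomposition must realize the splitting of $\operatorname{gr}\bar H_*(A(f))$ into these pieces.

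Next I would use the co-$H$ structure to make $f$ ``lower triangular'' for the wedge, i.e.\ homotope $f$ so that $f(W_n)\subseteq\bigvee_{k\ge n}W_k$ for every $n$. Hypothesis~(1) says precisely that each composite $W_n\hookrightarrow W\xrightarrow{f}W\twoheadrightarrow\bigvee_{k<n}W_k$ is zero on mod $p$ homology; I would remove these obstructions one summand at a time, using the cofibre sequence $\bigvee_{k\ge 2}W_k\to W\to W_1$ (which $f$ respects after a first compression) and an induction over skeleta, where simple connectivity, finite type and $p$-locality keep the successive obstruction groups under control. Writing $\delta=\bigvee_n g_n\colon W\to W$ for the resulting block-diagonal part, $f$ then differs from $\delta$ by a strictly filtration-raising error. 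Because $I^{\bullet}$ is locally finite and $\delta_*=E^0f_*$ is idempotent, a telescope/nilpotence argument shows that this error is invisible to the mapping telescope: one gets a comparison map between $\hocolim_{\delta}W$ and $A(f)=\hocolim_fW$ that is an isomorphism on mod $p$ homology, hence a homotopy equivalence by the Whitehead theorem (everything being $p$-local, simply connected and of finite type). Finally $\delta$ respects the wedge, and sequential homotopy colimits commute with wedges, so
$$
A(f)\simeq\hocolim_{\delta}W=\hocolim_{\delta}\Bigl(\bigvee_n W_n\Bigr)\simeq\bigvee_n\hocolim_{g_n}W_n=\bigvee_n A_n(f),
$$
and the $n$-th summand has homology $\im(E^0_nf_*)$ by the first step.

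The main obstacle is this middle step: promoting the purely homological input (``$f_*$ preserves $I^{\bullet}$ with idempotent associated graded'') to the geometric statement that, up to homotopy and up to a telescopically invisible error, $f$ is the diagonal map $\bigvee_n g_n$. The homology bookkeeping is routine and the wedge-versus-colimit interchange is formal, but the compression of $f$ and the verification that the residual filtration-raising term does not alter the homotopy type of the telescope are exactly where one uses that $W$ is a $p$-local, simply connected, finite type \emph{graded co-$H$} space; that is the heart of the argument.
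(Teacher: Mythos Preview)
The paper does not prove this lemma: it is quoted verbatim from \cite{GTW} and closed with a $\Box$, so there is no argument here to compare your proposal against. Your sketch is therefore being measured against an external reference rather than anything in this text.

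On the substance of your outline: the first paragraph (homology bookkeeping via telescopes, exactness of filtered colimits, and the identification $\bar H_*(A_n(f))\cong\im(E^0_nf_*)$ from idempotence) is sound and is the routine part. The third paragraph, where you acknowledge the compression of $f$ to a block-triangular map as ``the heart of the argument,'' is exactly where your write-up is thin. The specific worry is your sentence that the components $W_n\to\bigvee_{k<n}W_k$ can be killed because they vanish on mod $p$ homology: for $p$-local simply connected co-$H$ spaces this implication is false in general (such maps can be detected by secondary operations or carry nontrivial homotopy), so ``obstruction groups under control'' is not a justification. What actually saves the argument is not that these off-diagonal pieces are null, but that they are \emph{strictly filtration-raising} and hence homologically nilpotent in each degree; one then builds an explicit comparison map between $\hocolim_f W$ and $\hocolim_{\bigvee g_n} W$ (e.g.\ by interleaving the two systems, using the co-$H$ structure to add and subtract the diagonal) and checks it is a mod $p$ homology isomorphism. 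You gesture at this (``telescope/nilpotence argument''), but as written you have two distinct mechanisms in play --- a geometric compression that does not work as stated, and a telescope comparison that does but which you have not actually constructed. If you drop the compression and make the interleaving map explicit, the sketch becomes a proof.
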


Let $X$ be any path-connected space. Let $H_*(\Omega \Sigma X)$ be
filtered by the powers of the augmentation ideal filtration. From
the classical suspension splitting Theorem~\cite{James1}
$$
\phi\colon \Sigma \Omega\Sigma X\rTo^{\simeq}
\bigvee_{n=1}^\infty\Sigma X^{(n)},
$$
$\Sigma\Omega\Sigma X$ is a simply graded co-$H$ space and the
filtration
$$
I^t\bar H_*(\Sigma \Omega\Sigma X)=\phi_*^{-1}(\bar
H_*(\bigvee_{n=t}^\infty \Sigma X^{(n)}))
$$
coincide with (the suspension) of the augmentation ideal filtration
of $H_*(\Omega\Sigma X)$.

Let $W$ and $W'$ be graded spaces. Then $W\wedge W'$ is a graded
space with homotopy equivalence
$$
W\wedge W'\rTo^{\phi_W\wedge
\phi_{W'}}_{\simeq}(\bigvee_{n=1}^\infty W_n)\wedge
(\bigvee_{n=1}^\infty W'_n)=\bigvee_{n=1}^\infty \bigvee_{i=1}^{n-1}
W_i\wedge W'_{n-i}
$$
and the homology $\bar H_*(W\wedge W')=\bar H_*(W)\otimes \bar
H_*(W')$ is the tensor product of filtered modules. Moreover if $W$
is a graded co-$H$ space, then $W\wedge W'$ is a graded co-$H$
space.

Recall that~\cite{SW1} any natural coalgebra retract $A(V)$ of $T(V)$ for
ungraded modules admits the tensor length decomposition
$$
A(W)=\bigoplus_{n=0}^\infty A_n(W)
$$
for any graded or ungraded module $W$.

\begin{thm}[Suspension Splitting Theorem~\cite{GTW}]\label{theorem4.8}
Let $A(V)$ be any natural coalgebra retract of $T(V)$ for any
ungraded modules $V$ and let $\bar A$ be the geometric realization of
$A$.Then for any $p$-local simply connected co-$H$ space $Y$ of
finite type and any $p$-local path-connected co-$H$-space $Z$, there
is a functorial splitting
$$
Z\wedge \bar A(Y)\simeq \bigvee_{n=1}^\infty  [Z\wedge \bar A (Y)]_n
$$
such that
$$
\bar H_*([Z\wedge \bar A(Y)]_n)\cong \bar H_*(Z)\otimes
A_{n}(\Sigma^{-1}\bar H_*(Y))
$$
for each $n\geq 1$. In particular, for a $p$-local simply connected
co-$H$-space $Y$ there is a functorial suspension splitting
$$
\Sigma \bar A(Y)\simeq \bigvee_{n=1}^\infty \bar A_n(Y)
$$
such that
$$
\Sigma^{-1}\bar H_*(\bar A_n(Y))\cong A_n(\Sigma^{-1}\bar H_*(Y))
$$
for each $n\geq 1$.\hfill $\Box$
\end{thm}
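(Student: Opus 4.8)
The plan is to present $Z\wedge\bar A(Y)$, at each stage, as the homotopy colimit of a filtration-preserving homotopy idempotent of a graded co-$H$ space whose grading is the tensor-length (equivalently, the augmentation-ideal) filtration, and then to read off the splitting and the homology from Lemma~\ref{lemma4.4}. The algebraic input is that everything in sight is \emph{tensor-length graded}: since there are no nonzero natural transformations $T_n\to T_m$ for $n\ne m$ (\cite{GW}, as already used in Section~\ref{section3}), any natural transformation between functors which are direct sums of the $T_n$ decomposes diagonally; in particular the defining natural coalgebra transformations $s_V\colon A(V)\to T(V)$ and $r_V\colon T(V)\to A(V)$ are tensor-length graded, so the natural homotopy idempotent $e=s\circ r\colon T\to T$ satisfies $e=\bigoplus_n e_n$ with $\Image e_n=A_n\subseteq T_n$. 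By Theorems~\ref{theorem4.1} and~\ref{theorem4.2}, $\bar A(Y)$ is a natural retract of $\Omega Y$; fix a natural homotopy idempotent $\hat e_Y\colon\Omega Y\to\Omega Y$ with $\bar A(Y)=\hocolim_{\hat e_Y}\Omega Y$, so that on associated graded (mod $p$) homology $E^0(\hat e_Y)_*=e$.

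First I would prove the suspension splitting in the case $Y=\Sigma X$ with $X$ path-connected. The classical James splitting $\Sigma\Omega\Sigma X\simeq\bigvee_{n\ge1}\Sigma X^{(n)}$ (\cite{James1}) exhibits $\Sigma\Omega\Sigma X$ as a graded co-$H$ space whose homology filtration is precisely the tensor-length filtration of $\bar H_*(\Omega\Sigma X)=T(\Sigma^{-1}\bar H_*\Sigma X)$. By the paragraph above, $\Sigma\hat e_{\Sigma X}$ preserves this filtration and $E^0(\Sigma\hat e_{\Sigma X})_*=\Sigma e$ is idempotent, so Lemma~\ref{lemma4.4} applies; since suspension commutes with homotopy colimits one gets a splitting
$$
\Sigma\bar A(\Sigma X)=\hocolim_{\Sigma\hat e_{\Sigma X}}\Sigma\Omega\Sigma X\simeq\bigvee_{n\ge1}\bar A_n(\Sigma X),\qquad\Sigma^{-1}\bar H_*(\bar A_n(\Sigma X))\cong A_n(\Sigma^{-1}\bar H_*\Sigma X).
$$
For a general simply connected co-$H$ space $Y$ of finite type, $Y$ is a retract of the suspension $\Sigma\Omega Y$ via a section of the evaluation $\Sigma\Omega Y\to Y$, natural on $\CoH$. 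Hence $\bar A(Y)$ is a natural retract of $\bar A(\Sigma\Omega Y)$, so $\Sigma\bar A(Y)$ is the homotopy colimit of the induced homotopy idempotent of $\Sigma\bar A(\Sigma\Omega Y)\simeq\bigvee_n\bar A_n(\Sigma\Omega Y)$. That homotopy idempotent is induced by a natural transformation, hence is tensor-length graded on $E^0$ homology and preserves the grading; Lemma~\ref{lemma4.4} now gives $\Sigma\bar A(Y)\simeq\bigvee_n\bar A_n(Y)$ with $\Sigma^{-1}\bar H_*(\bar A_n(Y))\cong A_n(\Sigma^{-1}\bar H_*Y)$.

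For the general smash version, let $Z$ be any $p$-local path-connected co-$H$ space, so that $Z$ is a retract of $\Sigma\Omega Z$ via a natural section of the evaluation. Then $Z\wedge\bar A(Y)$ is a retract of $\Sigma\Omega Z\wedge\bar A(Y)\cong\Sigma\bigl(\Omega Z\wedge\bar A(Y)\bigr)$; using the splitting from the previous step, $\Sigma\Omega Z\wedge\bar A(Y)\cong\Omega Z\wedge\Sigma\bar A(Y)\simeq\bigvee_n\bigl(\Omega Z\wedge\bar A_n(Y)\bigr)$, which is thereby a graded co-$H$ space (it is the splitting of a suspension). The induced homotopy idempotent of this wedge is built from the section and retraction of $Z$ over $\Sigma\Omega Z$ smashed with the identity of $\bar A(Y)$; on homology it acts only on the $\bar H_*(Z)$ tensor factor and is the identity on $\bigoplus_n\bar H_*(\bar A_n(Y))$, so it preserves the grading by $n$ and its $E^0$ is idempotent. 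Applying Lemma~\ref{lemma4.4} one last time yields the functorial splitting $Z\wedge\bar A(Y)\simeq\bigvee_n[Z\wedge\bar A(Y)]_n$, and computing the image of $E^0$ in degree $n$ --- where the suspension shift coming from $\Sigma\bar A(Y)$ cancels the desuspension introduced by carrying $Z$ into $\Sigma\Omega Z$ --- identifies it with $\bar H_*(Z)\otimes A_n(\Sigma^{-1}\bar H_*Y)$. Functoriality is inherited from the functoriality of the decomposition in Lemma~\ref{lemma4.4}, and taking $Z=S^1$ recovers the suspension splitting.

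\textbf{Main obstacle.} The routine but genuinely delicate part is verifying the hypotheses of Lemma~\ref{lemma4.4} at the two reduction steps: that the homotopy idempotents coming from ``$Y$ is a retract of $\Sigma\Omega Y$'' and ``$Z$ is a retract of $\Sigma\Omega Z$'' really preserve the relevant tensor-length filtrations and induce idempotents on the associated graded with the \emph{expected} image, namely that the auxiliary $\Omega Y$- and $\Omega Z$-``noise'' introduced by passing to suspensions of loop spaces collapses in the associated graded, leaving exactly $\bar H_*(Z)\otimes A_n(\Sigma^{-1}\bar H_*Y)$. This rests on the facts that the evaluation composed with the co-$H$ section is the identity and that every natural transformation in play is tensor-length graded (\cite{GW}), but it requires care with the bigrading of homological versus filtration degree and with the suspension isomorphisms. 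One must also make sure the section $\Sigma\Omega Y\to Y$ can be chosen naturally on $\CoH$, which is exactly the functoriality the statement needs.
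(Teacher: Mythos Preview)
Your proposal is correct and follows essentially the approach the paper sets up: the paper does not actually prove Theorem~\ref{theorem4.8} here (it is cited from~\cite{GTW} and closed with a box), but the surrounding material in Section~\ref{section4} --- Lemma~\ref{lemma4.4}, the James splitting making $\Sigma\Omega\Sigma X$ a graded co-$H$ space, and Proposition~\ref{GTW} producing the equivalence $\calH^Y$ via the co-$H$ section $s_{\mu'}\colon Y\to\Sigma\Omega Y$ --- is exactly the machinery you invoke. The only organizational difference is that the paper packages the passage from $\Sigma X$ to a general co-$H$ space $Y$ into the single equivalence $\calH^Y$ of Proposition~\ref{GTW} (composing $\Sigma\Omega s_{\mu'}$ with $\calH$ and the colimit over the $g_n$), whereas you do it in two steps; both routes rest on the same filtration-preservation check you flag as the main obstacle.
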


\subsection{Hopf Invariants}
In this section, we review the results in Hopf invariants from~\cite{GTW}.
Let $A(V)$ be a natural coalgebra retract of $T(V)$ for ungraded
modules. From the suspension splitting theorem
(Theorem~\ref{theorem4.8}), there is a decomposition
$$
\Sigma \bar A(Y)\simeq \bigvee_{n=1}^\infty \bar A_n(Y)
$$
and so it induces Hopf invariants
$$
H_n\colon \bar A(Y)\rTo \Omega (\bar A_n(Y)).
$$
For computational purpose on homology, it is useful to make a
particular choice of Hopf invariants $H_n$. Let
$$
\calH\colon \Sigma\Omega\Sigma X\simeq \Sigma J(X)\rTo^{\simeq}
\bigvee_{n=1}^\infty \Sigma X^{(n)}
$$
be the fat James-Hopf invariants. Let $H_*(\Omega\Sigma X)$ be
filtered by the products of the augmentation ideal and let
$H_*(\bigvee_{n=1}^\infty X^{(n)})$ be filtered by
$$
\bigoplus_{t\geq n} H_*(X^{(t)}).
$$
By~\cite[Proposition 3.7]{SW2}, the isomorphism
$$
\calH_*\colon H_*(\Sigma\Omega\Sigma X)\rTo H_*(\bigvee_{n=1}^\infty \Sigma X^{(n)})
$$
preserves the filtration. Note that the composite
$$
\Sigma\Omega\Sigma X\rTo^{\calH} \bigvee_{n=1}^\infty \Sigma
X^{(n)}\rTo^{\mathrm{proj}} \Sigma X^{(n)}
$$
is the James-Hopf invariant $H_n$. Let $Y\in\CoH$ be a simply
connected $p$-local co-$H$-space of finite type with the
cross-section map $s_{\mu'}\colon Y\rTo \Sigma \Omega Y$. Let $f$ be
the composite
$$
\Sigma \Omega \Sigma \Omega Y\rTo^{\Sigma\Omega \sigma} \Sigma\Omega
Y\rTo^{\Sigma \Omega s_{\mu'}}\Sigma \Omega \Sigma \Omega Y.
$$
By Lemma~\ref{lemma4.4}, the space $[\Sigma\Omega Y]_n$ is defined
to be the homotopy colimit of the self map $g_n(Y)$ given by the
composite
$$
\begin{array}{ccccc}
\Sigma (\Omega Y)^{(n)} &\rInto& \bigvee_{k=1}^\infty \Sigma (\Omega
Y)^{(k)}&\rTo^{\calH^{-1}}&\Sigma \Omega \Sigma \Omega
Y\\
&\rTo^{\Sigma\Omega (s_{\mu'}\circ \sigma)} &\Sigma \Omega \Sigma
\Omega Y &\rTo^{\calH}& \bigvee_{k=1}^\infty \Sigma (\Omega
Y)^{(k)}\\
&\rOnto^{\mathrm{proj}}& \Sigma (\Omega Y)^{(n)}.&&\\
\end{array}
$$

\begin{prop}\label{GTW}
For any $Y\in \CoH$, the composite
$$
\begin{array}{ccccc}
\calH^Y\colon \Sigma\Omega Y &\rTo^{\Sigma\Omega s_{\mu'}}
&\Sigma\Omega \Sigma\Omega Y &\rTo^{\calH}_{\cong}&
\bigvee_{n=1}^\infty \Sigma (\Omega
Y)^{(n)}\\
&\rTo& \bigvee_{n=1}^\infty \hocolim_{g_n}\Sigma (\Omega Y)^{(n)}\\
\end{array}
$$
is a homotopy equivalence.\hfill $\Box$
\end{prop}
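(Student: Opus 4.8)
To prove Proposition~\ref{GTW}, the strategy is to recognize the target $\bigvee_{n\ge1}\hocolim_{g_n}\Sigma(\Omega Y)^{(n)}$ as the homotopy colimit of the self-map $f=\Sigma\Omega(s_{\mu'}\circ\sigma)$ on $W:=\Sigma\Omega\Sigma\Omega Y$, and to split that self-map by hand. Since $s_{\mu'}$ is a cross-section of the evaluation $\sigma\colon\Sigma\Omega Y\to Y$, we have $\sigma\circ s_{\mu'}\simeq\id_Y$; hence $f\circ f=\Sigma\Omega\bigl(s_{\mu'}\circ(\sigma\circ s_{\mu'})\circ\sigma\bigr)\simeq f$, so $f$ is a homotopy idempotent. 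Setting $i:=\Sigma\Omega s_{\mu'}\colon\Sigma\Omega Y\to W$ and $r:=\Sigma\Omega\sigma\colon W\to\Sigma\Omega Y$ gives $r\circ i\simeq\id_{\Sigma\Omega Y}$ and $i\circ r\simeq f$. An interleaving argument for the mapping telescope then shows $\hocolim_f W\simeq\Sigma\Omega Y$, and under this equivalence the canonical structure map $W\to\hocolim_f W$ corresponds to $r=\Sigma\Omega\sigma$.

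Next I would apply Lemma~\ref{lemma4.4} to the graded co-$H$-space $W=\Sigma\Omega\Sigma\Omega Y$, graded by $\phi_W=\calH$, the fat James--Hopf equivalence $\calH\colon\Sigma\Omega\Sigma\Omega Y=\Sigma J(\Omega Y)\simeq\bigvee_{n\ge1}\Sigma(\Omega Y)^{(n)}$, together with the self-map $f$. Both hypotheses of Lemma~\ref{lemma4.4} hold. For (1): $\Omega(s_{\mu'}\circ\sigma)$ is a loop map, so it induces a Hopf-algebra endomorphism of $H_*(\Omega\Sigma\Omega Y)\cong T(\bar H_*(\Omega Y))$ and therefore preserves the powers of the augmentation ideal; since by \cite[Proposition 3.7]{SW2} the map $\calH_*$ carries the suspension of the augmentation-ideal filtration to the filtration $\bigoplus_{t\ge n}\bar H_*(\Sigma(\Omega Y)^{(t)})$, the map $f_*$ preserves the grading filtration on $\bar H_*(W)$. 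For (2): $f_*$ is an idempotent because $f$ is a homotopy idempotent, so the associated graded $E^0f_*$ is an idempotent. Lemma~\ref{lemma4.4} then produces a canonical homotopy decomposition $\hocolim_f W\simeq\bigvee_{n\ge1}\hocolim_{g_n}W_n$ with $W_n=\Sigma(\Omega Y)^{(n)}$, where $g_n$ is the composite $W_n\hookrightarrow\bigvee_k W_k\xrightarrow{\calH^{-1}}W\xrightarrow{f}W\xrightarrow{\calH}\bigvee_k W_k\twoheadrightarrow W_n$. Comparing with the construction preceding the Proposition, this $g_n$ is exactly $g_n(Y)$, so $\hocolim_{g_n}W_n=[\Sigma\Omega Y]_n=\hocolim_{g_n}\Sigma(\Omega Y)^{(n)}$. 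Moreover the decomposition of Lemma~\ref{lemma4.4} is induced levelwise by $\phi_W=\calH$, which means the wedge of structure maps $\bigvee_n\bigl(W_n\to\hocolim_{g_n}W_n\bigr)$, precomposed with $\calH$, agrees with the single structure map $W\to\hocolim_f W$ under the decomposition equivalence.

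Finally I would assemble the pieces. By definition $\calH^Y=\bigl(\bigvee_n(W_n\to\hocolim_{g_n}W_n)\bigr)\circ\calH\circ\Sigma\Omega s_{\mu'}$, so by the compatibility of the previous step this equals, up to the decomposition equivalence, the composite $\bigl(W\to\hocolim_f W\bigr)\circ\Sigma\Omega s_{\mu'}$. By the first step $\hocolim_f W\simeq\Sigma\Omega Y$ with the structure map corresponding to $\Sigma\Omega\sigma$; hence, modulo homotopy equivalences, $\calH^Y$ is identified with $\Sigma\Omega\sigma\circ\Sigma\Omega s_{\mu'}=\Sigma\Omega(\sigma\circ s_{\mu'})\simeq\id_{\Sigma\Omega Y}$, and therefore $\calH^Y$ is a homotopy equivalence. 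I expect the only real friction to lie in making the bookkeeping of the second paragraph precise: confirming that the ``canonical'' decomposition of Lemma~\ref{lemma4.4} is $\calH$-compatible in exactly the way needed to identify the wedge of structure maps with the single structure map $W\to\hocolim_f W$; checking hypothesis (1) of Lemma~\ref{lemma4.4} via \cite[Proposition 3.7]{SW2} is the other necessary, if routine, verification.
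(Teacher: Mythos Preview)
The paper does not give its own proof of this proposition: it is quoted from \cite{GTW} and closed with a $\Box$. Your argument is a reconstruction, and it is precisely the one the surrounding text is setting up. The paragraph before the proposition introduces $f=\Sigma\Omega(s_{\mu'}\circ\sigma)$ and defines $[\Sigma\Omega Y]_n=\hocolim_{g_n}\Sigma(\Omega Y)^{(n)}$ by way of Lemma~\ref{lemma4.4}, so recognising $f$ as a homotopy idempotent split by $(i,r)=(\Sigma\Omega s_{\mu'},\Sigma\Omega\sigma)$, identifying $\hocolim_f W$ with $\Sigma\Omega Y$, and then invoking Lemma~\ref{lemma4.4} is exactly the intended mechanism. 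Your verification of hypotheses (1) and (2) of Lemma~\ref{lemma4.4} is correct: $\Omega(s_{\mu'}\circ\sigma)$ is a loop map, hence filtration-preserving on $T(\bar H_*(\Omega Y))$, and \cite[Proposition~3.7]{SW2} transports this to the $\calH$-grading; idempotence of $E^0f_*$ is immediate from $f\simeq f\circ f$.

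The one point you flag---that the ``canonical'' equivalence $A(f)\simeq\bigvee_nA_n(f)$ of Lemma~\ref{lemma4.4} intertwines the single structure map $W\to A(f)$ with the wedge $\bigvee_n(W_n\to A_n(f))$ through $\phi_W=\calH$---is indeed not part of the \emph{statement} of Lemma~\ref{lemma4.4} as quoted here, but it is how the decomposition is actually built in \cite{GTW}: one applies $\phi_W$ levelwise in the telescope, and the resulting equivalence makes the relevant square with the structure maps commute by construction. So your concern is legitimate but dissolves once one opens the proof of Lemma~\ref{lemma4.4}; there is no genuine gap.
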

Now define the $n\,$th Hopf invariant $H_n^Y$ to be the adjoint map
to the composite
\begin{equation}\label{equation4.21}
\Sigma \Omega Y \rTo^{\calH^Y} \bigvee_{n=1}^\infty
\hocolim_{g_n}\Sigma (\Omega Y)^{(n)}\rTo^{\mathrm{proj}}
\hocolim_{g_n}\Sigma (\Omega Y)^{(n)}=[\Sigma\Omega Y]_n.
\end{equation}
\begin{thm}~\cite{GTW}\label{theorem4.11}
For any $Y\in\CoH$, there is a commutative diagram
\begin{diagram}
E^0H_*(\Omega Y)&\rTo^{E^0H^Y_{n\ast}}& E^0H_*(\Omega [\Sigma\Omega
Y]_n)\\
\uTo>{\cong}&&\uTo>{\cong}\\
T(\Sigma^{-1}\bar H_*(Y))&\rTo^{H_n}& T((\Sigma^{-1}\bar H_*(Y))^{\otimes n}),\\
\end{diagram}
where $H_n\colon T(V)\to T(V^{\otimes n})$ is the algebraic
James-Hopf map.\hfill $\Box$
\end{thm}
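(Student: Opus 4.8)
The plan is to compute $E^0H^Y_{n*}$ directly from the definition of $H^Y_n$ in~(\ref{equation4.21}) and to recognise the answer as the algebraic James--Hopf map. Write $V=\Sigma^{-1}\bar H_*(Y)$. First I would fix the two vertical isomorphisms of the diagram. The left one, $E^0H_*(\Omega Y)\cong T(V)$ with the augmentation-ideal filtration matching tensor length, is part of Theorem~\ref{theorem4.2}; applying the Bott--Samelson theorem to $X=\Omega Y$ gives $H_*(\Omega\Sigma\Omega Y)=T(\bar H_*(\Omega Y))$, which on $E^0$ is the tensor algebra on the \emph{inner} tensor algebra $\bar H_*(\Omega Y)\cong\bigoplus_{m\ge1}V^{\otimes m}$. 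For the target, recall from Lemma~\ref{lemma4.4} that $[\Sigma\Omega Y]_n$ is the homotopy colimit of an idempotent self-map of $\Sigma(\Omega Y)^{(n)}$ built from $\Sigma\Omega(s_{\mu'}\circ\sigma)$, and that $\bar H_*([\Sigma\Omega Y]_n)=\im(E^0_nf_*)$. Here the cross-section is decisive: on $E^0$ homology the evaluation $\sigma\colon\Sigma\Omega Y\to Y$ realises (the suspension of) the tensor-length-$1$ projection $\bar H_*(\Omega Y)\twoheadrightarrow V$, since the homology suspension annihilates decomposables, while $s_{\mu'}$ realises the inclusion $V\hookrightarrow\bar H_*(\Omega Y)$. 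Hence $E^0_nf_*$ is the projection of $\bar H_*(\Sigma(\Omega Y)^{(n)})=\Sigma\,\bar H_*(\Omega Y)^{\otimes n}$ onto its ``multidegree $(1,\dots,1)$'' summand $\Sigma V^{\otimes n}$, so $\bar H_*([\Sigma\Omega Y]_n)=\Sigma V^{\otimes n}$ and, again by Bott--Samelson, $E^0H_*(\Omega[\Sigma\Omega Y]_n)=T(V^{\otimes n})$: this is the right vertical isomorphism.

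Next I would unwind $H^Y_n$. By~(\ref{equation4.21}) its adjoint is the composite $\Sigma\Omega Y\xrightarrow{\Sigma\Omega s_{\mu'}}\Sigma\Omega\Sigma\Omega Y\xrightarrow{\calH}\bigvee_k\Sigma(\Omega Y)^{(k)}\xrightarrow{\mathrm{proj}}\Sigma(\Omega Y)^{(n)}\to[\Sigma\Omega Y]_n$, so $H^Y_n$ is the composite $\Omega Y\to\Omega\Sigma\Omega Y\to\Omega[\Sigma\Omega Y]_n$ of the adjunction unit followed by $\Omega$ of the displayed map; in particular $E^0H^Y_{n*}$ is a composite of morphisms of coalgebras, hence a coalgebra map $T(V)\to T(V^{\otimes n})$. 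By~\cite[Proposition 3.7]{SW2} the fat James--Hopf isomorphism $\calH_*$ preserves the augmentation-ideal filtration, and on $E^0$ it is the tautological splitting of $\Sigma\,T(\bar H_*(\Omega Y))$ by outer tensor length. Combining this with the classical identification of the geometric James--Hopf invariant of a suspension with its algebraic counterpart (applied to $X=\Omega Y$), and with the first paragraph, I obtain that $E^0H^Y_{n*}$ equals the composite of natural coalgebra transformations of tensor-algebra functors
\[
T(V)\ \xrightarrow{\ \iota\ }\ T\big(\bar H_*(\Omega Y)\big)\ \xrightarrow{\ H^{\mathrm{alg}}_n\ }\ T\big(\bar H_*(\Omega Y)^{\otimes n}\big)\ \xrightarrow{\ \rho\ }\ T(V^{\otimes n}),
\]
where $\iota$ (coming from $\Omega s_{\mu'}$) sends a tensor-length-$m$ word in $V$ to the tensor-length-$m$ word in $T(\bar H_*(\Omega Y))$ whose letters are the images of its letters under $V\hookrightarrow\bar H_*(\Omega Y)$, where $H^{\mathrm{alg}}_n$ is the algebraic James--Hopf map of the module $\bar H_*(\Omega Y)$, and where $\rho$ is induced by the projection $\bar H_*(\Omega Y)^{\otimes n}\twoheadrightarrow V^{\otimes n}$ of the first paragraph.

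Finally it remains to check the purely algebraic identity $\rho\circ H^{\mathrm{alg}}_n\circ\iota=H_n$ of natural coalgebra transformations $T(V)\to T(V^{\otimes n})$, and this is where the representation theory of Section~\ref{section2} enters. Both sides are homogeneous, sending $T_{nk}(V)$ into $T_k(V^{\otimes n})$, so by $\Hom(T_m,T_{m'})=0$ for $m\ne m'$ each is carried by its components $T_{nk}(V)\to T_k(V^{\otimes n})$; one checks the $k=1$ component, where the identity reduces to the observation that among the length-$n$ subwords appearing in $H^{\mathrm{alg}}_n(\iota(v_1\cdots v_n))$ only the full word survives the multidegree-$(1,\dots,1)$ projection $\rho$, and then propagates it to all $k$ by compatibility with the coalgebra structure together with the one-letter reduction technique of Section~\ref{section2}. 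This gives $E^0H^Y_{n*}=H_n$, so the square commutes. The step I expect to be the main obstacle is the bookkeeping in the first two paragraphs: one must carry along two nested tensor-algebra filtrations --- the outer one from $\Omega\Sigma(-)$ and the inner one from $\bar H_*(\Omega Y)$ --- and verify that the homotopy-colimit retraction of Lemma~\ref{lemma4.4}, fed by $s_{\mu'}$ and $\sigma$, collapses the inner filtration in precisely the way that converts the geometric construction into the combinatorial algebraic James--Hopf map; \cite[Proposition 3.7]{SW2} is what makes the passage to $E^0$ rigorous.
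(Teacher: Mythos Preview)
The paper does not actually prove Theorem~\ref{theorem4.11}: the statement is imported from~\cite{GTW} and closed with a $\Box$, with no argument given here. So there is nothing in this paper to compare your proposal against; your write-up is, in effect, a reconstruction of a proof that the author deliberately outsources.

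That said, your outline is a reasonable reconstruction and uses exactly the ingredients the paper has set up for this purpose: the identification $E^0H_*(\Omega Y)\cong T(V)$ from Theorem~\ref{theorem4.2}, the homotopy-colimit description of $[\Sigma\Omega Y]_n$ via Lemma~\ref{lemma4.4} fed by $s_{\mu'}$ and $\sigma$, and the filtration-preserving property of $\calH_*$ from~\cite[Proposition~3.7]{SW2}. One place where you overcomplicate things is the final algebraic identity $\rho\circ H_n^{\mathrm{alg}}\circ\iota=H_n$. You do not need the representation theory of Section~\ref{section2} here: since the algebraic James--Hopf map is a \emph{natural} transformation $T(-)\to T((-)^{\otimes n})$, and since on $E^0$ the maps $\iota$ and $\rho$ are induced by a split inclusion $j\colon V\hookrightarrow W$ with retraction $\pi$ (so $\iota=T(j)$, $\rho=T(\pi^{\otimes n})$, $\pi j=\id_V$), naturality gives
\[
\rho\circ H_n^{W}\circ\iota \;=\; T(\pi^{\otimes n})\circ T(j^{\otimes n})\circ H_n^{V}\;=\;H_n^{V}
\]
directly. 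The genuine work, as you correctly flag, is the filtration bookkeeping: making precise that passing to $E^0$ collapses the inner tensor-length of $\bar H_*(\Omega Y)$ so that only the ``multidegree $(1,\dots,1)$'' summand $V^{\otimes n}\subset \bar H_*(\Omega Y)^{\otimes n}$ survives in $\bar H_*([\Sigma\Omega Y]_n)$, and that the two augmentation-ideal filtrations (on source and target) are compatible under $H^Y_{n*}$ with the expected rescaling $nk\mapsto k$. That is where~\cite[Proposition~3.7]{SW2} and Lemma~\ref{lemma4.4} are doing the heavy lifting.
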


\section{Proof of Theorem~\ref{theorem1.5}}\label{section5}
\subsection{Proof of Theorem~\ref{theorem1.5}}
Now we give the proof of Theorem~\ref{theorem1.5}. All spaces are
localized at $p>2$. For a natural coalgebra retract $A$ of $T$, let
$\bar A$ be its geometric realization. According to the suspension
splitting theorem~\ref{theorem4.8}, there is a decomposition
$$
\Sigma \bar A(Y)\simeq \bigvee_{n=1}^\infty \bar A_n(Y).
$$
In particular, we have notations $\bar A^{\min}$ and $\bar
A^{\min}_n$. Let $X$ be a path-connected finite complex. Define $$
b_X=\sum\limits_{q=1}^\infty q\dim \bar H_q(X;\Z/p).$$

\begin{lem}\label{lemma5.1}
Let $Y$ be a simply connected co-$H$-space such that $\bar
H_{\odd}(Y)=0$ and $\dim \bar H_*(Y)=p-1$. Then
$$\bar
A^{\min}_{p^k-1}(Y)\simeq S^{(b_Y-p+1)\frac{p^k-1}{p-1}+1}.$$
\end{lem}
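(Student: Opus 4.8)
The plan is to use Theorem~\ref{theorem1.1} together with the suspension splitting Theorem~\ref{theorem4.8} to reduce the statement to an explicit homology computation, and then to recognize the answer as a $p$-local sphere.

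First I would put $V=\Sigma^{-1}\bar H_*(Y)$. The hypotheses $\bar H_{\odd}(Y)=0$ and $\dim\bar H_*(Y)=p-1$ say precisely that $V_{\even}=0$ and $\dim V=p-1$, so Theorem~\ref{theorem1.1} applies and gives a coalgebra isomorphism $A^{\min}(V)\cong\bigotimes_{i=0}^\infty E(\bar L_p^i(V))$. On the other hand, Theorem~\ref{theorem4.8} (equivalently Theorem~\ref{theorem1.4}) gives $\Sigma^{-1}\bar H_*(\bar A^{\min}_{p^k-1}(Y))\cong A^{\min}_{p^k-1}(V)$, so everything comes down to identifying the tensor-length $p^k-1$ summand of $A^{\min}(V)$ together with its internal grading.

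Next I would do the bookkeeping in the tensor-length grading. Since $\bar L_p^i$ has tensor length $p^i$ and, by~(\ref{equation3.3}), $\dim\bar L_p^i(V)=p-1$, the $j$th exterior power $E_j(\bar L_p^i(V))$, which is nonzero only for $0\le j\le p-1$, has tensor length $jp^i$. Hence the tensor-length $n$ component of $\bigotimes_{i\ge0}E(\bar L_p^i(V))$ equals $\bigotimes_i E_{j_i}(\bar L_p^i(V))$, where $n=\sum_i j_ip^i$ with $0\le j_i\le p-1$ is the base-$p$ expansion of $n$. Taking $n=p^k-1=\sum_{i=0}^{k-1}(p-1)p^i$ yields $A^{\min}_{p^k-1}(V)\cong\bigotimes_{i=0}^{k-1}E_{p-1}(\bar L_p^i(V))$, which is one-dimensional because each $E_{p-1}(\bar L_p^i(V))$ is the top exterior power of a $(p-1)$-dimensional module.

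Finally I would compute the internal degree and identify the space. Let $d_i$ denote the sum of the degrees of a basis of $\bar L_p^i(V)$; then $d_0=\sum_q q\dim V_q=b_Y-(p-1)$ from the definition of $b_Y$ and the desuspension. By Lemma~\ref{lemma3.2}, $\bar L_p^{i+1}(V)\cong E_{p-1}(\bar L_p^i(V))\otimes\bar L_p^i(V)$ with the first factor one-dimensional of degree $d_i$, so $d_{i+1}=(p-1)d_i+d_i=p\,d_i$ and hence $d_i=p^i(b_Y-p+1)$. Therefore the one-dimensional module $A^{\min}_{p^k-1}(V)$ lies in internal degree $\sum_{i=0}^{k-1}d_i=(b_Y-p+1)\frac{p^k-1}{p-1}$, so $\bar H_*(\bar A^{\min}_{p^k-1}(Y);\Z/p)$ is $\Z/p$ concentrated in degree $m=(b_Y-p+1)\frac{p^k-1}{p-1}+1$. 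Since $\bar A^{\min}_{p^k-1}(Y)$ is a simply connected $p$-local space (it is a wedge summand of the simply connected $\Sigma\bar A^{\min}(Y)$) whose mod $p$ homology is of finite type and one-dimensional, the universal coefficient theorem forces its integral homology to be $\Z_{(p)}$ in degree $m$ and zero elsewhere, and the Hurewicz and Whitehead theorems then give $\bar A^{\min}_{p^k-1}(Y)\simeq S^m$. The only points needing care are keeping the tensor-length grading cleanly separate from the internal topological grading in the middle step, and the finite-type remark used to pass from mod $p$ homology to the homotopy type of a sphere; I do not anticipate a genuine obstacle.
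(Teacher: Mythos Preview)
Your proposal is correct and follows essentially the same approach as the paper: set $V=\Sigma^{-1}\bar H_*(Y)$, invoke Theorem~\ref{theorem1.1} to read off $A^{\min}_{p^k-1}(V)=\bigotimes_{i=0}^{k-1}E_{p-1}(\bar L_p^i(V))$ as a one-dimensional module, and then appeal to Theorem~\ref{theorem4.8}. The paper's proof is terser---it omits the explicit base-$p$ bookkeeping, the degree recursion via Lemma~\ref{lemma3.2}, and the universal-coefficient/Hurewicz argument identifying the space as a $p$-local sphere---so your added detail is appropriate and does not deviate in strategy.
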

\begin{proof}
Let $V=\Sigma^{-1}\bar H_*(Y)$. Then $V_{\even}=0$ and $\dim V=p-1$.
By Theorem~\ref{theorem1.1},
$$
A^{\min}(V)=\bigotimes_{k=0}^\infty E(\bar L_p^k(V)).
$$
By considering tensor length $p^k-1$, we have
$$
A^{\min}_{p^k-1}(V)=E_{p-1}(V)\otimes E_{p-1}(\bar L_p(V))\otimes
\cdots\otimes E_{p-1}(\bar L_p^{k-1}(V)),
$$
which is a one-dimensional module. The assertion follows from
Theorem~\ref{theorem4.8}.
\end{proof}

Let $\bar Q^{\max}_n$ the geometric
realization of the indecomposables of $B^{\max}(V)$ as given in Theorem~\ref{theorem4.2}. Note that $Y$ is a retract of $\Sigma \bar
A^{\min}(Y)$ by Theorem~\ref{theorem4.8}.

\begin{lem}\label{lemma5.2}
Let $Y$ be any $p$-local simply connected co-$H$ space of finite
type. Then
\begin{enumerate}
\item there is a splitting cofibre sequence
$$
\bar Q^{\max}(Y)\longrightarrow \bar A^{\min}(Y)\wedge Y
\longrightarrow (\Sigma \bar A^{\min}(Y))/Y.
$$
Thus there is a decomposition $ Y\wedge \bar A^{\min}(Y) \simeq \bar
Q^{\max}(Y)\vee (\Sigma \bar A^{\min}(Y))/Y. $
\item There is a decomposition
$$
[Y\wedge \bar A^{\min}(Y)]_n\simeq \tilde Q^{\max}_n(Y)\vee \tilde
A^{\min}_n(Y).
$$
\item If $Y=\Sigma X$, then
$$
[Y\wedge \bar A^{\min}(Y)]_n\simeq X\wedge \bar A^{\min}_{n-1}(Y).
$$
\item If $\bar A^{\min}_{n-1}(Y)\simeq \Sigma Z$ for some $Z$,
then
$$
[Y\wedge \bar A^{\min}(Y)]_n\simeq Y\wedge Z.
$$
\item $\bar A^{\min}_p(Y)\simeq [Y\wedge \bar
A^{\min}(Y)]_p$.
\end{enumerate}
\end{lem}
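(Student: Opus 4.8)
\emph{Setup.} Write $V=\Sigma^{-1}\bar H_*(Y)$. The algebraic heart is Lemma~\ref{lemma3.1} applied to $A=T(V)$, $B=B^{\max}(V)$: since $T(V)$ is a tensor algebra with $QT(V)=V$ and $\bfk\otimes_{B^{\max}(V)}T(V)=A^{\min}(V)$, one gets a natural short exact sequence
$$
0\to Q^{\max}(V)\to A^{\min}(V)\otimes V\to I(A^{\min}(V))\to 0 ,
$$
which splits, by tensor length, into $0\to Q^{\max}_n(V)\to A^{\min}_{n-1}(V)\otimes V\to A^{\min}_n(V)\to 0$ for each $n$. The plan for part~1 is to realize this sequence by spaces. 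By Theorem~\ref{theorem4.2}, $E^0H_*(\bar A^{\min}(Y))=A^{\min}(V)$ with respect to the augmentation ideal filtration, so $E^0\bar H_*(\bar A^{\min}(Y)\wedge Y)=A^{\min}(V)\otimes\bar H_*(Y)$ realizes (a suspension of) the middle term; since $Y\simeq\bar A^{\min}_1(Y)$ is the bottom wedge summand, hence a retract, of $\Sigma\bar A^{\min}(Y)$, the quotient $(\Sigma\bar A^{\min}(Y))/Y\simeq\bigvee_{n\geq2}\bar A^{\min}_n(Y)$ realizes the cokernel; and $\bar Q^{\max}(Y)$ realizes $Q^{\max}(V)$, again by Theorem~\ref{theorem4.2}.

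\emph{Part 1.} I would produce a natural map $m\colon\bar A^{\min}(Y)\wedge Y\to(\Sigma\bar A^{\min}(Y))/Y$ geometrizing the module action $\bar a\otimes v\mapsto\bar a\cdot v$ that is the middle map of the sequence above; it can be built from the loop multiplication on $\Omega Y$ restricted to the retract $\bar A^{\min}(Y)$, composed with the bottom-cell inclusion $Y\hookrightarrow\Sigma\bar A^{\min}(Y)$ and the James--Hopf structure of Theorem~\ref{theorem4.8} and Proposition~\ref{GTW}. A comparison of $E^0$-homology with the short exact sequence then identifies the homotopy cofibre of $m$ with $\bar Q^{\max}(Y)$. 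Finally the cofibre sequence $\bar Q^{\max}(Y)\to\bar A^{\min}(Y)\wedge Y\to(\Sigma\bar A^{\min}(Y))/Y$ splits: all three spaces are $p$-local simply connected graded co-$H$ spaces of finite type, $m$ preserves the grading, and a section of $m$ can be assembled from the retracts $\bar A^{\min}_n(Y)$ of $\Sigma\bar A^{\min}(Y)$ realizing the coalgebra inclusions $A^{\min}_n(V)\hookrightarrow A^{\min}_{n-1}(V)\otimes V$; Lemma~\ref{lemma4.4} then splits off $(\Sigma\bar A^{\min}(Y))/Y$ and the leftover retract is $\bar Q^{\max}(Y)$. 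I expect the honest construction of $m$ and of this section—so that it is natural and induces the intended map on $E^0$-homology—to be the main obstacle; the remaining homological bookkeeping is routine.

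\emph{Parts 2--5.} These follow formally. Part~2 is the grading-$n$ refinement of part~1: since the cofibre sequence consists of graded co-$H$ spaces and $m$ respects the grading, Lemma~\ref{lemma4.4} gives $[Y\wedge\bar A^{\min}(Y)]_n\simeq\tilde Q^{\max}_n(Y)\vee\tilde A^{\min}_n(Y)$, with $\tilde A^{\min}_n(Y)$ the grading-$n$ summand of $(\Sigma\bar A^{\min}(Y))/Y$ and the homologies of the two factors governed by the length-$n$ piece of the short exact sequence. For part~3, when $Y=\Sigma X$ one has $Y\wedge\bar A^{\min}(Y)=\Sigma(X\wedge\bar A^{\min}(Y))$, so the splitting desuspends; both $[Y\wedge\bar A^{\min}(Y)]_n$ and $X\wedge\bar A^{\min}_{n-1}(Y)$ realize $\Sigma(\tilde H_*(X)\otimes A^{\min}_{n-1}(V))$, and the equivalence follows from the naturality in Theorem~\ref{theorem4.8} (applied with $Z=X$). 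Part~4 is the mirror argument, desuspending the factor $\bar A^{\min}_{n-1}(Y)\simeq\Sigma Z$ in place of the $Y$-factor. For part~5, take $n=p$ in part~2: the length-$p$ sequence is $0\to Q^{\max}_p(V)\to A^{\min}_{p-1}(V)\otimes V\to A^{\min}_p(V)\to 0$, so once one knows $Q^{\max}_p(V)=0$ the summand $\tilde Q^{\max}_p(Y)$ is contractible and $[Y\wedge\bar A^{\min}(Y)]_p\simeq\tilde A^{\min}_p(Y)\simeq\bar A^{\min}_p(Y)$. The vanishing $Q^{\max}_p(V)=0$—that $B^{\max}(V)$ has no indecomposable generators in tensor length $p$, so that $A^{\min}_{p-1}(V)\otimes V\to A^{\min}_p(V)$ is an isomorphism—is the one algebraic point that needs care, and I would extract it from the description of $B^{\max}(V)$ in terms of the sub Hopf algebra $B(V)$ generated by the $L_j(V)$ with $j$ not a power of $p$.
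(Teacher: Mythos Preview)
Your algebraic setup via Lemma~\ref{lemma3.1} is exactly right and matches the paper. The overall architecture for parts~2--5 is also essentially the paper's, and your handling of part~5 is correct once $Q^{\max}_p(V)=0$ is known (the paper simply cites \cite[Section~11.2]{SW1} for this; it does not follow just from $B(V)\subseteq B^{\max}(V)$, so your proposed extraction from the $B(V)$ description would need more).

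The real gap is in part~1. You propose to build the map $m\colon \bar A^{\min}(Y)\wedge Y\to(\Sigma\bar A^{\min}(Y))/Y$ first and then ``identify the homotopy cofibre of $m$ with $\bar Q^{\max}(Y)$'' by an $E^0$-homology comparison. But a homology isomorphism does not by itself produce a homotopy equivalence with the specific space $\bar Q^{\max}(Y)$; you need an actual map to or from $\bar Q^{\max}(Y)$. The paper supplies this via the geometric input you omit: since $\bar B^{\max}(Y)\simeq\Omega\bar Q^{\max}(Y)$ (Theorem~\ref{theorem4.2}(1)), there is a fibre sequence
\[
\Omega Y\rTo^{\partial}\bar A^{\min}(Y)\rTo \bar Q^{\max}(Y)\rTo Y,
\]
and the $\Omega Y$-action on $\bar A^{\min}(Y)$ together with the Hopf construction for $\Omega Y$ yields an explicit map $f\colon\bar Q^{\max}(Y)\to\bar A^{\min}(Y)\wedge Y$ and a map $g\colon\bar A^{\min}(Y)\wedge Y\to(\Sigma\bar A^{\min}(Y))/Y$ with $g\circ f\simeq\ast$; Lemma~\ref{lemma3.1} then shows $C_f\simeq(\Sigma\bar A^{\min}(Y))/Y$. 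The splitting is obtained not by a section of $g$ as you suggest, but by an explicit retraction $r\colon\bar A^{\min}(Y)\wedge Y\to\bar Q^{\max}(Y)$ (again built from the Hopf construction and the evaluation $\Sigma\Omega\bar Q^{\max}(Y)\to\bar Q^{\max}(Y)$), with $(r\circ f)_*$ an isomorphism by \cite[Lemma~2.37]{Wu}. Your instinct that loop multiplication and the co-$H$ structure are the ingredients is correct, but the fibre sequence above is the missing bridge that lets $\bar Q^{\max}(Y)$ enter the picture as a space rather than merely as a homology computation.
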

\begin{proof}
(1) and (2). Since $\bar B^{\max}(Y)\simeq \Omega\bar Q^{\max}(Y),$
there is a fibre sequence
$$
\Omega Y\rTo^{\partial} \bar A^{\min}(Y)\rTo^{\simeq\ast} \bar Q^{\max}(Y)\rTo Y.
$$
There is a (right) action of $\Omega Y$ on $\bar A^{\min}(Y)$ such
that the diagram
\begin{diagram}
\Omega Y\times \Omega Y&\rTo&\Omega Y\\
\dTo>{\partial\times \id_{\Omega Y}}&&\dTo>{\partial}\\
\bar A^{\min}(Y)\times \Omega Y&\rTo& \bar A^{\min}(Y)\\
\end{diagram}
commutes. Consider the commutative diagram
\begin{diagram}
           & & & &\Sigma \Omega \bar Q^{\max}(Y) &\lInto& \bar
           Q^{\max}(Y)\\
           &&&&\dTo &&\\
(\Omega Y) \wedge Y &\rTo^{\id_{\Omega Y}\wedge s}& \Sigma\Omega
Y\wedge \Omega
Y&\rTo^{H}&\Sigma\Omega Y&\rTo& (\Sigma\Omega Y)/Y\\
\dTo&& \dTo &&\dTo&&\dTo\\
 \bar A^{\min}(Y)\wedge Y&\rTo^{\id_{\bar A^{\min}(Y)}\wedge s}&
\bar A^{\min}(Y)\wedge \Sigma \Omega Y &\rTo^{H}& \Sigma \bar
A^{\min}(Y)&\rTo& (\Sigma
\bar A^{\min}(Y))/Y,\\
\end{diagram}
where $H$ is the Hopf construction. Observe that the composite of
the middle row is a homotopy equivalence. It follows that there is a
sequence
\begin{equation}\label{equation5.1}
\bar Q^{\max}(Y)\rTo^f \bar A^{\min}(Y)\wedge Y\rTo^g (\Sigma \bar
A^{\min}(Y))/Y
\end{equation}
such that $g\circ f\simeq\ast$, where $g$ is the composite of the
bottom rows and $f$ is the composite of the maps from top row down
to the middle right composing with the homotopy inverse from
$(\Sigma\Omega Y)/Y$ to $\Omega Y \wedge Y$ and the left column map.
Let $C_f$ be the homotopy cofibre of the map $f$. By
Lemma~\ref{lemma3.1}, the resulting map
$$
C_f\longrightarrow (\Sigma \bar A^{\min}(Y))/Y
$$
is a homotopy equivalence. Thus Sequence~(\ref{equation5.1}) is a
cofibre sequence. Let $r$ be the composite
\begin{equation}\label{equation5.2}
r\colon \bar A^{\min}(Y)\wedge Y\rInto^{s^{\min}\wedge \id_Y}
(\Omega Y)\wedge Y \rTo^{H\circ (\id_{\Omega Y}\wedge s)} \Sigma
\Omega Y \rOnto \Sigma\Omega \tilde Q^{\max}(Y)\rTo^{\sigma} \bar
Q^{\max}(Y).
\end{equation}
By the proof of ~\cite[Lemma 2.37]{Wu}, on homology
$$
(r\circ f)_*\colon \bar H_*(\tilde Q^{\max}(Y))\rTo \bar H_*(\bar
Q^{\max}(Y))
$$
is an isomorphism. Thus $f\colon \bar Q^{\max}(Y)\to \bar
A^{\min}(Y)\wedge Y$ admits a retraction and hence assertion (1).
Let $H_*(\Omega Y)$ be filtered by the products of the augmentation
ideal. Then $H_*(\bar A^{\min}(Y))$ and $H_*(\bar Q^{\max}(Y)$ have
the induced filtration. Observe that the maps $f,g$ and $r$ induces
filtration preserving maps on homology. Thus the decomposition
$$
Y\wedge \bar A^{\min}(Y) \simeq \bar Q^{\max}(Y)\vee (\Sigma \bar
A^{\min}(Y))/Y
$$
induces a graded decomposition and hence assertion (2).

(3) By Theorem~\ref{theorem4.8}, $$\Sigma \bar A^{\min}(Y)\simeq
\bigvee_{k=1}^\infty \bar A^{\min}_k(Y).$$ Let $s\colon \bar
A^{\min}_{n-1}(Y)\rInto \Sigma\bar A^{\min}(Y)$ be the inclusion.
Then composite
$$
\begin{array}{lcl}
X\wedge \bar A^{\min}_{n-1}(Y)&\rInto^{\id_X\wedge s}& X\wedge
\Sigma \bar A^{\min}(Y)\\
&\simeq &Y\wedge \bar A^{\min}(Y)\\
&\simeq &\bigvee\limits_{k=2}^\infty[Y\wedge \bar
A^{\min}(Y)]_k\\
&\rOnto^{\proj.}& [Y\wedge \bar A^{\min}(Y)]_n\\
\end{array}
$$
induces an isomorphism on homology and hence assertion (3).

(4) Let $s\colon Y\to \Sigma\Omega Y$ be a cross-section to the
evaluation map $\sigma \colon \Sigma\Omega Y\to Y$. Let $f$ be the
idempotent
$$
\Sigma \Omega Y\wedge \bar A^{\min}(Y)\rTo^{\sigma\wedge \id_{\bar
A^{\min}(Y)}} Y\wedge \bar A^{\min}(Y)\rInto^{s\wedge \id_{\bar
A^{\min}(Y)}} \Sigma\Omega Y\wedge \bar A^{\min}(Y)
$$
and let $g_n$ be the composite
$$
\begin{array}{lcl}
\Omega Y \wedge \bar A^{\min}_{n-1}(Y)&\simeq &[\Sigma \Omega Y
\wedge \bar A^{\min}(Y)]_n\\
&\rInto& \Sigma \Omega Y\wedge \bar A^{\min}(Y)\\
&\rTo^f &\Sigma \Omega Y\wedge \bar A^{\min}(Y)\\
&\simeq &\Omega Y\wedge \Sigma \bar A^{\min}(Y)\\
&\rOnto^{\proj.}& \Omega Y \wedge \bar A^{\min}_{n-1}(Y),\\
\end{array}
$$
where the top homotopy equivalence follows from assertion (3) with
$X=\Omega Y$. According to Lemma~\ref{lemma4.4},
$$ [Y\wedge \bar A^{\min}(Y)]_n\simeq \hocolim_{g_n}\Omega Y \wedge
\tilde A^{\min}_{n-1}(Y)$$ with the canonical retraction
$$
r\colon \Omega Y\wedge \bar A^{\min}_{n-1}(Y)\longrightarrow
[Y\wedge \bar A^{\min}(Y)]_n.
$$
Now the composite
$$
\begin{array}{lcllcllcl}
Y\wedge Z&\rTo^{s\wedge\id_Z}&\Sigma\Omega Y \wedge Z &\simeq
&\Omega Y\wedge \bar A^{\min}_{n-1}(Y)
&\rTo^r &[Y\wedge \bar A^{\min}(Y)]_n\\
\end{array}
$$
induces an isomorphism on homology and so it is a homotopy
equivalence. Assertion (4) follows.

(5). According to~\cite[Section 11.2, p.97]{SW1}, $Q^{\max}_p(V)=0$.
Thus $$\bar H_*(\bar Q^{\max}_p(Y))=0$$ and so $\bar
Q^{\max}_p(Y)\simeq\ast$. By assertion (2),
$$\bar A^{\min}_p(Y)\simeq [Y\wedge\bar A^{\min}(Y)]_p$$ and hence
the result.
\end{proof}

\begin{lem}\label{lemma5.3}
Let $Y$ be a simply connected co-$H$-space such that $\bar
H_{\odd}(Y)=0$ and $\dim \bar H_*(Y)=p-1$. Then
$$
\bar A^{\min}_p(Y)\simeq \Sigma^{b_Y-p+1} Y.
$$
\end{lem}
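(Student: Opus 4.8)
The plan is to deduce this by chaining together three results already in hand: the explicit computation of $\bar A^{\min}_{p-1}(Y)$ from Lemma~\ref{lemma5.1}, the structural identifications in Lemma~\ref{lemma5.2}, and nothing more. The point is that under the hypotheses $\bar H_{\odd}(Y)=0$ and $\dim\bar H_*(Y)=p-1$, the factor $\bar A^{\min}_{p-1}(Y)$ is not merely a co-$H$-space but an honest sphere, hence a suspension, so the case distinction in Lemma~\ref{lemma5.2}(4) applies at $n=p$.

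First I would apply Lemma~\ref{lemma5.1} with $k=1$ to get
$$
\bar A^{\min}_{p-1}(Y)\simeq S^{(b_Y-p+1)\frac{p-1}{p-1}+1}=S^{b_Y-p+2}=\Sigma S^{b_Y-p+1}.
$$
Here one should note that $b_Y\ge 2(p-1)$, since $\bar H_*(Y)$ is concentrated in even degrees $\ge 2$ and has total dimension $p-1$; thus $b_Y-p+1\ge p-1\ge 1$ and $S^{b_Y-p+1}$ is genuinely a (connected) suspension, so writing $\bar A^{\min}_{p-1}(Y)\simeq\Sigma Z$ with $Z=S^{b_Y-p+1}$ is legitimate. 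Next I would feed this into Lemma~\ref{lemma5.2}(4) with $n=p$ (so $n-1=p-1$), obtaining
$$
[Y\wedge\bar A^{\min}(Y)]_p\simeq Y\wedge Z=Y\wedge S^{b_Y-p+1}=\Sigma^{b_Y-p+1}Y.
$$
Finally, Lemma~\ref{lemma5.2}(5) gives $\bar A^{\min}_p(Y)\simeq[Y\wedge\bar A^{\min}(Y)]_p$, and composing the two homotopy equivalences yields $\bar A^{\min}_p(Y)\simeq\Sigma^{b_Y-p+1}Y$, which is the assertion.

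In truth there is no serious obstacle at this stage: the substance of the argument lives in Lemma~\ref{lemma5.1} (which rests on Theorem~\ref{theorem1.1} and the suspension splitting of Theorem~\ref{theorem4.8}) and in Lemma~\ref{lemma5.2}(4)--(5). The only points demanding a moment of care in the write-up are the degree bookkeeping in the exponent and the observation that $\bar A^{\min}_{p-1}(Y)$ really is a sphere, so that the hypothesis ``$\bar A^{\min}_{n-1}(Y)\simeq\Sigma Z$'' of Lemma~\ref{lemma5.2}(4) is met; once these are noted, the proof is a two-line assembly.
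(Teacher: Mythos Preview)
Your proof is correct and follows exactly the same route as the paper: apply Lemma~\ref{lemma5.1} with $k=1$ to identify $\bar A^{\min}_{p-1}(Y)\simeq S^{b_Y-p+2}$, then invoke Lemma~\ref{lemma5.2}(4) with $Z=S^{b_Y-p+1}$ and Lemma~\ref{lemma5.2}(5). Your added remark that $b_Y\ge 2(p-1)$ ensures $Z$ is a genuine suspension is a nice point of rigor that the paper leaves implicit.
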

\begin{proof}
By Lemma~\ref{lemma5.1}, $\bar A^{\min}_{p-1}=S^{b_Y-p+2}$. By
Lemma~\ref{lemma5.2} (4),
$$
[Y\wedge \bar A^{\min}(Y)]_p\simeq \Sigma^{b_Y-p+1}Y.
$$
The result then follows from Lemma~\ref{lemma5.2} (5).
\end{proof}

\begin{proof}[Proof of Theorem~\ref{theorem1.5}]
(1) and (2). Let
$$
H^Y_p\colon \Omega Y\rTo \Omega [\Sigma\Omega Y]_p
$$
be the James-Hopf invariant. By Lemma~\ref{lemma5.3},
$$\bar
A^{\min}_p(Y)\simeq \Sigma^{b_Y-p+1}Y.
$$
Define the map $H_p$ by
the composite
$$
\begin{array}{ccccc}
\bar A^{\min}(Y)&\rInto& \Omega Y&\rTo^{H^Y_p}& \Omega( [\Sigma
\Omega Y]_p)\\
&\rOnto& \Omega (\bar A^{\min}_p(Y))&\rOnto& \bar A^{\min}(\bar
A^{\min}_p(Y))=\bar A^{\min}(\Sigma^{b_Y-p+1}Y).\\
\end{array}
$$
Let $\bar E(Y)$ be the homotopy fibre of the map $H_p$. Then there
is a fibre sequence
$$
\bar E(Y)\rTo \bar A^{\min}(Y)\rTo^{H_p} \bar A^{\min}(\bar
A^{\min}_p(Y)).
$$

We compute the homology of $\bar E(Y)$. Let $V=\Sigma^{-1}\bar
H_*(Y)$. According to Theorem~\ref{theorem4.11},
$$
E^0H_{p\ast}=H_p\colon T(V)=E^0H_*(\Omega Y)\rTo T(V^{\otimes
p})=E^0H_*(\Omega([\Sigma\Omega Y]_p))
$$
is the usual algebraic James-Hopf map. Thus there is a commutative
diagram
\begin{diagram}
T(V)&\rTo^{H_p}&T(V^{\otimes p})\\
\dOnto&&\dOnto\\
E^0H_*(\bar A^{\min}(Y))=A^{\min}(V)&\rTo^{E^0H_{p\ast}}&
E^0H_*(\bar
A^{\min}(\bar A^{\min}_p(Y)))\\
\end{diagram}
where $H_p$ is the algebraic James-Hopf map. By
Theorem~\ref{theorem1.1} and Lemma~\ref{lemma3.2}, the primitives of
$A^{\min}(V)$ is given by
$$
PA^{\min}(V)=\bigoplus_{k=0}^\infty \bar L^k_p(V).
$$
Let $\bar L$ be the sub Lie algebra of $L(V)$ generated by $L_p(V)$.
Note that
$$
\bar L_p^k(V)\subseteq \bar L
$$
for $k\geq 1$. By~\cite[Theorem 1.1]{Wu1}, the James-Hopf map
$$H_p|_{\bar L}\colon \bar L\rTo PT(V^{\otimes p})$$
is a morphism of Lie algebras. Since
$$H_{p}|_{L_p}\colon \bar L_p\to
V^{\otimes p}$$ is the canonical inclusion,
$$
H_p(\bar L_p^{k+1}(V))=\bar L_p^k(\bar L_p(V))
$$
for each $k\geq 0$. Note that $\bar L_p(V)\subseteq A^{\min}_p(V)$
and, since $\dim V=p-1$,
$$
\dim \bar L_p(V)=\dim A^{\min}_p(V) =p-1.
$$
Thus $\bar L_p(V)=A^{\min}_p(V)$. It follows that
\begin{equation}\label{equation5.3}
PE^0H_{p\ast}\colon PE^0H_*(\bar A^{\min}(Y))\rTo PE^0H_*(\bar
A^{\min}(\bar A^{\min}_p(Y)))
\end{equation}
is onto with the kernel $L^0_p(V)=V$.

For any graded Hopf algebra $A$, let $A^{\ab}$ be the abelianization
of $A$ and let
$$
\phi\colon A\rTo A^{\ab}
$$
be the quotient map. Define $\bar\phi$ to be the composite
$$
H_*(\bar A^{\min}(Y))\rInto H_*(\Omega Y)\rTo^{\phi} H_*(\Omega
Y)^{\ab}.
$$
Since $H_*(\Omega Y)$ is generated by odd dimensional elements,
$H_*(\Omega Y)^{\ab}$ is an exterior algebra. Moreover
$E^0H_*(\Omega Y)^{\ab}$ is primitively generated exterior algebra.
Thus
\begin{equation}\label{equation5.4}
E^0H_*(\Omega Y)^{\ab}=E(V).
\end{equation}
Consider the map
\begin{equation}\label{equation5.5}
\begin{array}{ccc}
\theta\colon H_*(\bar A^{\min}(Y))&\rTo^{\psi}& H_*(\bar
A^{\min}(Y))\otimes H_*(\bar A^{\min}(Y))\\
&\rTo^{\bar\phi\otimes H_{p\ast}}& H_*(\Omega Y)^{\ab}\otimes
H_*(\bar A^{\min}(\bar A^{\min}_p(Y))).\\
\end{array}
\end{equation}
From equations~(\ref{equation5.3}) and ~(\ref{equation5.4}),
$$
E^0P\theta\colon PE^0H_*(\bar A^{\min}(Y))\rTo PE^0(H_*(\Omega
Y)^{\ab}\otimes H_*(\bar A^{\min}(\bar A^{\min}_p(Y))))
$$
is an isomorphism. Thus
$$
E^0\theta\colon E^0H_*(\bar A^{\min}(Y))\rTo E^0(H_*(\Omega
Y)^{\ab}\otimes H_*(\bar A^{\min}(\bar A^{\min}_p(Y))))
$$
is a monomorphism. Since both sides have the same Poincar\'e series,
$E^0\theta$ is an isomorphism and so $\theta$ is an isomorphism.
Thus there is an algebra isomorphism on cohomology
$$
H^*(\bar A^{\min}(Y))\cong H^*(\bar A^{\min}(\bar
A^{\min}_p(Y)))\otimes (H_*(\Omega Y)^{\ab})^*.
$$
By the Eilenberg-Moore spectral sequence, there is an epimorphism
$$
H_*(\Omega Y)^{\ab}\rOnto H_*(\bar E(Y))
$$
and so the Poincar\'{e} series
$$
\chi(H_*(\bar E(Y)))\leq \frac{\chi(H_*(\bar
A^{\min}(Y)))}{\chi(H_*(\bar A^{\min}(\bar A^{\min}_p(Y))))}.
$$
By applying the Serre spectral sequence to the fibre sequence
$$
\bar E(Y)\rTo \bar A^{\min}(Y)\rTo \bar A^{\min}(\bar
A^{\min}_p(Y)),
$$
we have the $E^2$-terms given by $$H_*(\bar E(Y))\otimes H_*(\bar
A^{\min}(\bar A^{\min}_p(Y)))$$ and so
$$
\chi(H_*(\bar A^{\min}(Y)))\leq \chi(H_*(\bar E(Y))\chi(H_*(\bar
A^{\min}(\bar A^{\min}_p(Y)))).
$$
It follows that
$$
\chi(H_*(\bar E(Y)))=\frac{\chi(H_*(\bar
A^{\min}(Y)))}{\chi(H_*(\bar A^{\min}(\bar A^{\min}_p(Y))))}
$$
and so
$$
H_*(\Omega Y)^{\ab}\cong H_*(\bar E(Y)).
$$
This finishes the proof of assertion (1) and (2).

(3). Let $\bar Y$ be the homotopy cofibre of the map $f\colon S^n\to
Y$. Since $f$ is a co-$H$-map, $\bar Y$ is a co-$H$-space. By the
naturality of $\bar A^{\min}$, the pinch map $q\colon Y\to \bar Y$
induces a map
$$
\bar A^{\min}(Y)\rTo \bar A^{\min}(\bar Y).
$$
Let $V'=\Sigma^{-1}\im(f_*\colon \bar H_*(S^n)\to \bar H_*(Y))$ and
let $\bar V=\Sigma^{-1}\bar H_*(\bar Y)\cong V/V'$. Since $\dim \bar
V=p-2$,
\begin{equation}\label{equation5.6}
E^0H_*(\bar A^{\min}(\bar Y))=E(\bar V)
\end{equation}
by~\cite[Corollary 11.6]{SW1}. From the proof of assertions (1) and
(2), the composite
\begin{equation}\label{equation5.7}
H_*(\bar E(Y))\rInto H_*(\bar A^{\min}(Y))\rInto H_*(\Omega Y)\rOnto
H_*(\Omega Y)^{\ab}
\end{equation}
is an isomorphism.  Let $g$ be the composite
$$
\bar E(Y)\rTo \bar A^{\min}(Y)\rTo \bar A^{\min}(\bar Y).
$$
Consider the commutative diagram
\begin{equation}\label{equation5.8}
\begin{diagram}
H_*(\bar E(Y))&\rInto& H_*(\bar A^{\min}(Y))& \rInto &H_*(\Omega Y)&
\rOnto& H_*(\Omega Y)^{\ab}\\
&\rdTo>{g_*}&\dTo&&\dOnto&&\dOnto\\
&  & H_*(\bar A^{\min}(\bar Y))&\rInto& H_*(\Omega \bar Y)&\rOnto & H_*(\Omega\bar Y)^{\ab},\\
\end{diagram}
\end{equation}
where the composites in the top and bottom rows are isomorphisms.
Thus $$g_*\colon H_*(\bar E(Y))\to H_*(\bar A^{\min}(Y))$$ is onto.
By~\cite[Proposition 4.20]{MM}, both $H_*(\Omega Y)^{\ab}$ and
$H_*(\Omega \bar Y)^{\ab}$ are primitively generated because they
are commutative with trivial restricted maps. Thus
$$
H_*(\Omega \bar Y)^{\ab}\cong E(\bar V) \textrm{ and } H_*(\Omega
Y)^{\ab}\cong E(V)\cong E(\bar V)\otimes E(V')
$$
as Hopf algebras. From diagram~\ref{equation5.8} the map $g_*$
induces a coalgebra decomposition
$$
H_*(\bar E(Y))\cong H_*(\bar A^{\min}(\bar Y))\otimes E(V').
$$
It follows that the homotopy fibre of the map $g\colon \bar E(Y)\to
\bar A^{\min}(\bar Y)$ is $S^{n-1}$ by using the Eilenberg-Moore
spectral sequence. Thus there is a homotopy commutative diagram of
fibre sequences
\begin{diagram}
\Omega\bar A^{\min}(\bar A^{\min}_p(Y))&\rTo^{P_f}& S^{n-1}&\rTo&
B_f&
\rTo&\bar A^{\min}(\bar A^{\min}_p(Y))\\
\dEq&&\dTo&&\dTo&&\dEq\\
\Omega\bar A^{\min}(\bar A^{\min}_p(Y))&\rTo^{P}& \bar
E(Y)&\rTo^{E}& \bar A^{\min}(Y)&
\rTo^{H_p} &\bar A^{\min}(\bar A^{\min}_p(Y))\\
 & &\dTo&&\dTo&&\\
 & &\bar A^{\min}(\bar Y)&\rEq&\bar A^{\min}(\bar Y)& &\\
\end{diagram}
and hence the result.
\end{proof}

\subsection{Self-maps of $\bar A^{\min}(Y)$}
Let $Y$ be a simply connected co-$H$-space of $(p-1)$-cell complex
with the cells in even dimensions. In this case, the integral
homology of $Y$ is torsion free and so we can take homology $H_*(Y)$
over $p$-local integers. Let $f$ be any self-map of $\bar
A^{\min}(Y)$. Let $V=\Sigma^{-1}\bar H_*(Y)$ with a basis
$\{x_{l_1},\ldots,x_{l_{p-1}}\}$, where the degree $|x_{l_i}|=l_i$
with $l_1\leq\cdots \leq l_{p-1}$. By Theorem~\ref{theorem1.5},
there is a coalgebra isomorphism
$$
H_*(\bar A^{\min}(Y))\cong \bigotimes_{k=0}^\infty H_*(\bar E(\bar
A^{\min}_{p^k}(Y)))\cong \bigotimes_{k=0}^\infty
E(A^{\min}_{p^k}(V))\cong A^{\min}(V).
$$
Thus the primitives
$$
PH_*(\bar A^{\min}(Y))=\bigoplus_{k=0}^\infty A^{\min}_{p^k}(V)
$$
and so on cohomology
$$
QH^*(\bar A^{\min}(Y))=\bigoplus_{k=0}^\infty A^{\min}_{p^k}(V)^*.
$$
Note that $A^{\min}_{p^k}(V)$ has a basis in the form
$$
w_{p^k,i}=[(x_{l_1}\wedge\cdots\wedge
x_{l_{p-1}})^{\frac{p^k-1}{p-1}}x_{l_i}]
$$
for $1\leq i\leq p-1$. Let $f^*_{k,l}$ be the composite
$$
A^{\min}_{p^k}(V)^*\rInto QH^*(\bar A^{\min}(Y))\rTo^{f^*} QH^*(\bar
A^{\min}(Y))\rTo^{\mathrm{proj.}} A^{\min}_{p^l}(V)^*.
$$
Note that $|w_{p^k,p-1}|<|w_{p^{k'},1}|$ for $k<k'$. Thus
\begin{equation}\label{equation5.9}
f^*_{k,l}=0 \textrm{ for } k\not=l
\end{equation}
and so
\begin{equation}\label{equation5.10}
f^*(A^{\min}_{p^k}(V)^*)\subseteq A^{\min}_{p^k}(V)^*.
\end{equation}
It follows that
$$
f^*\colon A^{\min}(V)^*=H^*(\bar A^{\min}(Y))\rTo H^*(\bar
A^{\min}(Y))=A^{\min}(V)^*
$$
is a graded map with respect to the tensor length. Since $\dim
A^{\min}_{p^k-1}(V)=1$, let $\deg^k(f)$ be the degree of the map
$$
f^*\colon A^{\min}_{p^k-1}(V)^*\rTo A^{\min}_{p^k-1}(V)^*.
$$
\begin{prop}
Let $f\colon \bar A^{\min}(Y)\to \bar A^{\min}(Y)$ be any map. Then
$\deg^k(f)$ over $p$-local integers is invertible if and only if
$$
f_*\colon H_j(\bar A^{\min}(Y);\Z/p)\rTo H_j(\bar A^{\min}(Y);\Z/p)
$$
is an isomorphism for $j< (l_1+\cdots +l_{p-1})\cdot
\frac{p^k-1}{p-1}+l_1$.
\end{prop}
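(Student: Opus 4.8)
The plan is to compute $\deg^k(f)$ explicitly as a product of determinants attached to $f^*$ on the tensor-length pieces of $QH^*(\bar A^{\min}(Y))$, and then to read off the equivalence with the low-degree behaviour of $f_*$. Write $N_k=(l_1+\cdots+l_{p-1})\frac{p^k-1}{p-1}+l_1$; since $l_1+\cdots+l_{p-1}=b_Y-p+1$, this $N_k$ is the degree bound in the statement, and it equals the smallest degree $|w_{p^k,1}|$ occurring in $A^{\min}_{p^k}(V)^*$. As recorded above, over the $p$-local integers $H_*(\bar A^{\min}(Y))$ is torsion free, $H_*(\bar A^{\min}(Y))\cong A^{\min}(V)$, and $QH^*(\bar A^{\min}(Y))=\bigoplus_{j\ge 0}A^{\min}_{p^j}(V)^*$; since $V$ sits in odd degrees, each $A^{\min}_{p^j}(V)$ is free of rank $p-1$ and concentrated in odd degrees, so $H^*(\bar A^{\min}(Y))\cong\bigotimes_{j\ge 0}E(A^{\min}_{p^j}(V)^*)$ as $\Z_{(p)}$-algebras. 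By~(\ref{equation5.10}) the ring map $f^*$ carries each summand $A^{\min}_{p^j}(V)^*$ of $QH^*$ into itself; let $M_j$ be the resulting $(p-1)\times(p-1)$ matrix over $\Z_{(p)}$.

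First I would pin down the generator $w_{p^k-1}^*$ of the one-dimensional space $A^{\min}_{p^k-1}(V)^*$ inside this exterior algebra. Because $p^k-1=\sum_{j=0}^{k-1}(p-1)p^j$ is the base-$p$ expansion of $p^k-1$ (all digits equal to $p-1$), the unique monomial of tensor length $p^k-1$ in $\bigotimes_{j}E(A^{\min}_{p^j}(V))$ is the tensor product of the top classes of the factors with $j<k$; dually this gives $w_{p^k-1}^*=\pm\prod_{j=0}^{k-1}\prod_{i=1}^{p-1}w_{p^j,i}^*$ in $H^*(\bar A^{\min}(Y))$. Applying the ring map $f^*$ and using the top-exterior-power identity $\prod_{i=1}^{p-1}f^*(w_{p^j,i}^*)=\det(M_j)\prod_{i=1}^{p-1}w_{p^j,i}^*$, I obtain $f^*(w_{p^k-1}^*)=\bigl(\prod_{j=0}^{k-1}\det M_j\bigr)w_{p^k-1}^*$, i.e. $\deg^k(f)=\prod_{j=0}^{k-1}\det M_j$ up to a $p$-local unit. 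Hence $\deg^k(f)$ is invertible over $\Z_{(p)}$ exactly when each $M_j$ with $0\le j<k$ is invertible mod $p$.

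Next I would match this against $f_*$. A direct degree count shows that for $j<k$ the whole of $A^{\min}_{p^j}(V)^*$ lies in degrees $<N_k$ — indeed $N_k-|w_{p^j,p-1}|=(b_Y-p+1)\frac{p^k-p^j}{p-1}+l_1-l_{p-1}\ge (b_Y-p+1)+l_1-l_{p-1}>0$, using $b_Y-p+1=l_1+\cdots+l_{p-1}\ge l_1+l_{p-1}$ — whereas every generator in $A^{\min}_{p^j}(V)^*$ with $j\ge k$ sits in degree $\ge N_k$. Therefore $H^*(\bar A^{\min}(Y))$ agrees in degrees $<N_k$ with the subalgebra $\bigotimes_{j<k}E(A^{\min}_{p^j}(V)^*)$. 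If each $M_j$ ($j<k$) is invertible, then $f^*$ restricts to an automorphism of this subalgebra, hence is an isomorphism of $H^*(\bar A^{\min}(Y))$ in degrees $<N_k$, and dualizing mod $p$ gives that $f_*$ is an isomorphism on $H_r(\bar A^{\min}(Y);\Z/p)$ for $r<N_k$. Conversely, if $f_*$ (equivalently $f^*$) is an isomorphism in degrees $<N_k$, then so is the induced map on $QH^*$ in those degrees; since $f^*$ is block diagonal for the tensor-length grading of $QH^*$ and each $A^{\min}_{p^j}(V)^*$ with $j<k$ lies below $N_k$, each block $M_j$ ($j<k$) is invertible mod $p$. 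Combining with the product formula of the previous paragraph gives the proposition.

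The hard part is the first step — verifying that the tensor decomposition of $H_*(\bar A^{\min}(Y))$ genuinely forces $w_{p^k-1}$ to be the indicated product of top classes, and that dualizing converts the determinants of $f^*$ on the length-$p^j$ pieces into the single scalar $\deg^k(f)$. Once that is in hand, the degree estimates and the passage between $f^*$ and $f_*$ are routine bookkeeping.
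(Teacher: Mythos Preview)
Your proof is correct and follows the same route as the paper. The paper's entire argument is the single identity
\[
\deg^k(f)=\prod_{j=0}^{k-1}\det\bigl(f^*\colon A^{\min}_{p^j}(V)^*\to A^{\min}_{p^j}(V)^*\bigr),
\]
with ``the assertion follows'' left to the reader; you supply exactly the two missing pieces --- the base-$p$ uniqueness forcing $A^{\min}_{p^k-1}(V)^*$ to be the product of the top exterior classes for $j<k$, and the degree estimate $|w_{p^j,p-1}|<N_k\le |w_{p^k,1}|$ that isolates the blocks $M_0,\ldots,M_{k-1}$ below the cutoff.
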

\begin{proof}
The assertion follows from that
$$
\deg^k(f)=\prod_{j=0}^{k-1} \det\left(f^*\colon
A^{\min}_{p^j}(V)^*\rTo A^{\min}_{p^j}(V)^*\right)
$$
for cohomology with coefficients over $p$-local integers.
\end{proof}

Since $\bar A^{\min}(Y)$ is an $H$-space, there is a product $f\ast
g$ for any self maps $f$ and $g$ of $\bar A^{\min}(Y)$.

\begin{prop}
Let $f$ and $g$ be any self maps of $\bar A^{\min}(Y)$. Then
$$
\deg^k(f\ast g)=\prod_{j=0}^{k-1}\det\left(f_*+g_*\colon
A^{\min}_{p^j}(V)\rTo A^{\min}_{p^j}(V)\right).
$$
\end{prop}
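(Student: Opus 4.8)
The plan is to reduce the computation of $\deg^k(f\ast g)$ to a product of determinants over the tensor-length components, exactly as in the preceding proposition, but now tracking what the loop-space multiplication does on homology. First I would recall that $\bar A^{\min}(Y)$ is an $H$-space (being a retract of $\Omega Y$), so the product $f\ast g$ is the composite $\bar A^{\min}(Y)\xrightarrow{\Delta}\bar A^{\min}(Y)\times\bar A^{\min}(Y)\xrightarrow{f\times g}\bar A^{\min}(Y)\times\bar A^{\min}(Y)\xrightarrow{\mu}\bar A^{\min}(Y)$. On mod $p$ homology this gives $(f\ast g)_*=\mu_*\circ(f_*\otimes g_*)\circ\psi$, where $\psi$ is the coproduct on $H_*(\bar A^{\min}(Y))$ and $\mu_*$ is the (Pontryagin) product coming from the $H$-structure, which agrees with the algebra structure on $H_*(\Omega Y)$ restricted to the retract $H_*(\bar A^{\min}(Y))\cong A^{\min}(V)$.

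Next I would restrict attention to the one-dimensional pieces $A^{\min}_{p^k-1}(V)$. The key observation, already used in the previous proposition, is that a self-map $f$ respects the tensor-length grading on $H^*(\bar A^{\min}(Y))$ (equations~(\ref{equation5.9}) and~(\ref{equation5.10})), and dually on $H_*(\bar A^{\min}(Y))$; this forces $f_*$ to preserve each $A^{\min}_n(V)$. The element $w$ spanning $A^{\min}_{p^k-1}(V)$ is, up to sign, the product $w_{1,1}\wedge w_{1,2}\wedge\cdots\wedge w_{1,p-1}\wedge(\text{higher }w\text{'s})$ — more precisely, by Theorem~\ref{theorem1.1} and Lemma~\ref{lemma5.1}, $A^{\min}_{p^k-1}(V)=E_{p-1}(V)\otimes E_{p-1}(\bar L_p(V))\otimes\cdots\otimes E_{p-1}(\bar L_p^{k-1}(V))$, a tensor product of $k$ one-dimensional top exterior powers. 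So on $A^{\min}_{p^k-1}(V)$ the coproduct $\psi$ expands $w$ into the sum of all ways of splitting these $k$ factors between the two tensor slots, and $\mu_*$ recombines them; the only term that lands back in tensor length $p^k-1$ and survives is the one that, in each of the $k$ exterior-power factors $E_{p-1}(\bar L_p^j(V))$, keeps the factor whole on one side. Carrying this through, the effect of $(f\ast g)_*$ on the line $A^{\min}_{p^j}(V)$ (hence on its top exterior power) is multiplication by $\det(f_*+g_*\colon A^{\min}_{p^j}(V)\to A^{\min}_{p^j}(V))$, because on a top exterior power $\Lambda^{p-1}$ of a $(p-1)$-dimensional space the induced map of $f_*\otimes 1 + 1\otimes g_*$ composed along the diagonal is exactly the determinant of the sum $f_*+g_*$.

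Finally, since $\deg^k(f\ast g)$ is by definition the degree of the induced map on the one-dimensional space $A^{\min}_{p^k-1}(V)$, and that space factors as the tensor product $\bigotimes_{j=0}^{k-1}\Lambda^{p-1}(A^{\min}_{p^j}(V))$ on which $(f\ast g)_*$ acts diagonally, I would conclude
$$
\deg^k(f\ast g)=\prod_{j=0}^{k-1}\det\left(f_*+g_*\colon A^{\min}_{p^j}(V)\rTo A^{\min}_{p^j}(V)\right),
$$
exactly as in the preceding proposition (which is the special case where one of $f,g$ is the constant map, giving the product of $\det f_*$). The main obstacle I anticipate is the bookkeeping in the second step: verifying cleanly that in the coproduct–product composite the \emph{only} contribution on the relevant one-dimensional component is the ``keep each exterior factor whole'' term, i.e. that all mixed terms split some $E_{p-1}(\bar L_p^j(V))$ nontrivially and therefore either leave tensor length $p^k-1$ or vanish in an exterior algebra. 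This is where the hypotheses $\bar H_{\odd}(Y)=0$ and $\dim\bar H_*(Y)=p-1$ — which make every $A^{\min}_{p^j}(V)$ exactly $(p-1)$-dimensional and every $A^{\min}_{p^{j}-1}(V)$ one-dimensional — do the real work, and it should follow formally from the structure of $A^{\min}(V)$ in Theorem~\ref{theorem1.1} together with the dimension count in~(\ref{equation3.3}).
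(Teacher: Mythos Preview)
Your approach is workable but considerably more elaborate than the paper's. The paper's proof is a single observation: each $A^{\min}_{p^j}(V)=\bar L_p^j(V)$ consists entirely of \emph{primitive} elements of $H_*(\bar A^{\min}(Y))$, and for any primitive $w$ one has $(f\ast g)_*(w)=f_*(w)+g_*(w)$ (immediate from $\psi(w)=w\otimes 1+1\otimes w$). Hence $(f\ast g)_*|_{A^{\min}_{p^j}(V)}=f_*|_{A^{\min}_{p^j}(V)}+g_*|_{A^{\min}_{p^j}(V)}$, and one simply applies the preceding proposition to the self-map $f\ast g$. That is the entire argument.

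You instead compute $(f\ast g)_*$ directly on the one-dimensional piece $A^{\min}_{p^k-1}(V)$ via the full formula $\mu_*\circ(f_*\otimes g_*)\circ\psi$, factoring through the tensor decomposition $\bigotimes_{j=0}^{k-1}E_{p-1}(\bar L_p^j(V))$ and invoking the identity $\mu\circ(\Lambda A\otimes\Lambda B)\circ\psi=\det(A+B)$ on a top exterior power. This is correct in outline, and the exterior-algebra identity you quote is genuine. However, two extra verifications are hidden in your sketch: first, that $f_*$ and $g_*$ act on each $E(\bar L_p^j(V))$ as the \emph{multiplicative} extensions of their restrictions to $\bar L_p^j(V)$ (not automatic for mere coalgebra maps, though it follows from the dual fact that $f^*,g^*$ are algebra maps on the tensor-length-graded cohomology, which is what underlies the preceding proposition); second, the cross-term bookkeeping you flag as the main obstacle. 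Both are avoidable: recognizing that the $A^{\min}_{p^j}(V)$ are exactly the primitives collapses the whole computation to the one-line identity on primitives plus a direct appeal to the previous proposition. Your route reproves the previous proposition for $f\ast g$ from scratch rather than simply invoking it. (A minor expository slip: $A^{\min}_{p^j}(V)$ is $(p-1)$-dimensional, not a line; the line is its top exterior power $E_{p-1}(\bar L_p^j(V))$.)
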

\begin{proof}
Note that
$$
(f\ast g)_*(w)=f_*(w)+g_*(w)
$$
for any primitive element $w$. The assertion follows.
\end{proof}

\section{Proof of Theorem~\ref{theorem1.6} }\label{section6}
\begin{lem}\label{lemma6.1}
Let $Y$ be a $p$-local simply connected co-$H$-space such that
$H_{\odd}(Y)=0$ and $\dim \bar H_*(Y)=p-1$.
\begin{enumerate}
\item If $Y$ admits a nontrivial decomposition, then the $\EHP$
fibration splits off.
\item  If $Y$ is atomic, then $\bar E(Y)$ is also atomic.
\end{enumerate}
\end{lem}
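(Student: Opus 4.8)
\emph{Part (1).} Write the nontrivial decomposition as $Y\simeq Y_1\vee Y_2$ with $Y_1,Y_2$ non-contractible. Since $\bar H_{\odd}(Y)=0$ and $\dim\bar H_*(Y)=p-1$, each $Y_i$ is a $p$-local simply connected co-$H$ space with $\bar H_{\odd}(Y_i)=0$ and $n_i:=\dim\bar H_*(Y_i)\leq p-2$. Put $V_i=\Sigma^{-1}\bar H_*(Y_i)$. By \cite[Corollary 11.6]{SW1} one has $A^{\min}(V_i)=E(V_i)$, so Theorem~\ref{theorem4.2} gives that $\bar A^{\min}(Y_i)$ is an $H$-space with $H^*(\bar A^{\min}(Y_i))\cong E(V_i^*)$ and $QH^*(\bar A^{\min}(Y_i))=V_i^*$. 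Combining this with the retraction $\Sigma\bar A^{\min}(Y_i)\to\bar A^{\min}_1(Y_i)\simeq Y_i$ coming from Theorem~\ref{theorem4.8} shows that $Y_i$ is a retractile generating complex of $\bar A^{\min}(Y_i)$. Now set $X=\bar A^{\min}(Y_1)\times\bar A^{\min}(Y_2)$: this is an $H$-space, $H^*(X)=E(V^*)$ is generated by $QH^*(X)=V_1^*\oplus V_2^*=V^*$, and because $\Sigma X$ retracts onto $\Sigma\bar A^{\min}(Y_1)\vee\Sigma\bar A^{\min}(Y_2)$, composing with $r_1\vee r_2$ exhibits $Y=Y_1\vee Y_2$ as a retract of $\Sigma X$ with $\im(\Sigma^{-1}r^*)=V^*=QH^*(X)$. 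Hence condition (3) of Theorem~\ref{theorem1.6} holds, and the $\EHP$ fibration splits off by the implication $(3)\Rightarrow(1)$ of that theorem. The only genuine work is the verification of the retractile-generating-complex axioms for $X$; the implication $(3)\Rightarrow(1)$, which produces the section of $H_p$ out of an $H$-space carrying $Y$ as a retractile generating complex, is what the argument leans on.

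\emph{Part (2).} Let $f\colon\bar E(Y)\to\bar E(Y)$ induce an isomorphism on the bottom homology $H_{l_1-1}(-;\F_p)$, where $l_1$ is the lowest cell dimension of $Y$; the task is to show $f$ is a homotopy equivalence. The main preliminary is that $Y$ is a retract of $\Sigma\bar E(Y)$. To see this, note that $Y$ is a retract of $\Sigma\bar A^{\min}(Y)$ via the tensor-length-$1$ summand (Theorem~\ref{theorem4.8}), say $s_1\colon Y\to\Sigma\bar A^{\min}(Y)$ and $\rho\colon\Sigma\bar A^{\min}(Y)\to Y$ with $\rho s_1\simeq\id_Y$; by Theorem~\ref{theorem1.5}(2) the cofibre $C$ of $E\colon\bar E(Y)\to\bar A^{\min}(Y)$ is $(b_Y-p+l_1-1)$-connected, and since $\dim Y=l_{p-1}\leq b_Y-(p-1)$ (using $p>2$) and $l_1\geq2$ one has $[Y,\Sigma C]=0$; so $s_1$ lifts through $\Sigma E$ to a map $s\colon Y\to\Sigma\bar E(Y)$, and with $r:=\rho\circ\Sigma E$ we get $r\circ s\simeq\id_Y$. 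Writing $V=\Sigma^{-1}\bar H_*(Y)$, the maps $r_*$ and $s_*$ then restrict to mutually inverse isomorphisms between $\bar H_*(Y)=\Sigma V$ and the tensor-length-$1$ part of $\bar H_*(\Sigma\bar E(Y))=\Sigma E(V)$.

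Two formal facts finish the argument. First, for any self-map $g$ of $\bar E(Y)$ the induced map $g_*$ is a coalgebra map, hence preserves $\Prim H_*(\bar E(Y))=V$ (the generators of $E(V)$ are odd-dimensional), and $g_*$ is an isomorphism on $H_*(\bar E(Y))=E(V)$ if and only if $g_*|_V$ is, since dually $g^*$ is an algebra endomorphism of the free graded-commutative algebra $E(V^*)$, invertible exactly when it is invertible on $V^*$. Second, set $\hat f:=r\circ\Sigma f\circ s\colon Y\to Y$; by the previous paragraph $\hat f_*$ coincides degreewise with $\Sigma(f_*|_V)$, so $\hat f_*$ is an isomorphism on $\bar H_{l_1}(Y)$ because $f_*$ is one on $V_{l_1-1}$. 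Since $Y$ is atomic, $\hat f$ is a homotopy equivalence, so $\hat f_*$ is an isomorphism on all of $\bar H_*(Y)=\Sigma V$; therefore $f_*|_V$, and with it $f_*$ on $H_*(\bar E(Y);\F_p)$, is an isomorphism, and as $\bar E(Y)$ is a $p$-local nilpotent space of finite type, $f$ is a homotopy equivalence. Thus $\bar E(Y)$ is atomic. The main obstacle here is the preliminary statement that $Y$ is a retract of $\Sigma\bar E(Y)$ together with its homological description; everything after that is formal manipulation with coalgebras and exterior algebras.
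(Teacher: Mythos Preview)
Your Part~(2) is correct and follows the same strategy as the paper: produce a retraction $Y\to\Sigma\bar E(Y)\to Y$, transport $f_*|_V$ to a self-map of $Y$, and use atomicity of $Y$. The paper obtains the retraction more directly by observing that the composite $\Sigma\bar E(Y)\xrightarrow{\Sigma E}\Sigma\bar A^{\min}(Y)\to\bigvee_{n=1}^{p-1}\bar A^{\min}_n(Y)$ is a homotopy equivalence (a connectivity argument equivalent to yours), so that $\Sigma\bar E(Y)\simeq\bigvee_{n=1}^{p-1}\bar A^{\min}_n(Y)$ and $Y=\bar A^{\min}_1(Y)$ is visibly a wedge summand.

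Your Part~(1), however, has a genuine gap. You invoke the implication $(3)\Rightarrow(1)$ of Theorem~\ref{theorem1.6}, but that theorem carries the standing hypothesis that $Y$ is stably atomic, and a space admitting a nontrivial wedge decomposition $Y\simeq Y_1\vee Y_2$ is never (stably) atomic: the idempotent $Y\to Y_1\hookrightarrow Y$ is an isomorphism on the bottom cell yet not an equivalence. So Theorem~\ref{theorem1.6} simply does not apply here. Moreover the proof of $(3)\Rightarrow(1)$ in that theorem makes essential use of atomicity of $\bar E(Y)$ (via Lemma~\ref{lemma6.1}(2)) to identify the telescope $\bar X$ with $\bar E(Y)$; without atomicity that step fails, so you cannot rescue the argument by ``running the same proof.''

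The paper's argument for Part~(1) is instead direct and short: with $p_i\colon\bar A^{\min}(Y)\hookrightarrow\Omega Y\to\Omega Y_i\to\bar A^{\min}(Y_i)$, the composite
\[
\bar E(Y)\xrightarrow{E}\bar A^{\min}(Y)\xrightarrow{(p_1,p_2)}\bar A^{\min}(Y_1)\times\bar A^{\min}(Y_2)
\]
induces an isomorphism on mod~$p$ homology (both sides have homology $E(\Sigma^{-1}\bar H_*(Y))$), hence is an equivalence; its inverse composed with $(p_1,p_2)$ gives a retraction of $E$, so the $\EHP$ fibration splits. You had essentially built the target space $\bar A^{\min}(Y_1)\times\bar A^{\min}(Y_2)$ already; what is missing is exactly this one-line homology check in place of the appeal to Theorem~\ref{theorem1.6}.
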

\begin{proof}
(1). Suppose that $Y\simeq Y_1\vee Y_2$ be a nontrivial
decomposition. Since $\dim \bar H_*(Y_i)<p-1$, $H_*(\bar
A^{\min}(Y_i))=E(\Sigma^{-1}\bar H_*(Y_i))$. Let $p_i$ be the
composite
$$
\bar A^{\min}(Y)\rInto \Omega Y\rOnto^{\mathrm{proj.}}\Omega
Y_i\rOnto \bar A^{\min}(Y_i).
$$
Then the composite
$$
\bar E(Y)\rTo^E \bar A^{\min}(Y)\rTo^{(p_1,p_2)} \bar
A^{\min}(Y_1)\times \bar A^{\min}(Y_2)
$$
is a homotopy equivalence as it induces an isomorphism on homology.
Assertion (1) follows.

(2). Let $f\colon \bar E(Y)\to \bar E(Y)$ be any self map inducing an
isomorphism on the bottom cell.  Let $V=\Sigma^{-1}\bar H_*(Y)$. By
Theorem~\ref{theorem1.5}, $H_*(\bar E(Y))=E(V)$ as coalgebras. Then
$$
Pf_*\colon V=PE(V)\rTo V=PE(V)
$$
is an isomorphism on the bottom cell. From the suspension splitting
theorem (Theorem~\ref{theorem4.8}),
$$
\Sigma \bar E(Y)\simeq \bigvee_{n=1}^{p-1}\bar A^{\min}_n(Y)
$$
with $\Sigma^{-1} H_*(\bar A^{\min}_n(Y)=A^{\min}(V)$. Now the
composite
$$
g\colon Y=\bar A^{\min}_1(Y)\rInto \Sigma\bar E(Y)\rTo^{\Sigma f}
\Sigma \bar E(Y)\simeq \bigvee_{n=1}^{p-1}\bar A^{\min}_n(Y)\rOnto
\bar A^{\min}_1(Y)=Y
$$
is a homotopy equivalence because on homology
$\Sigma^{-1}g_*=Pf_*\colon V\to V$ is an isomorphism on the bottom
cell. It follows that $Pf_*\colon V\to V$ is an isomorphism and so
$f_*\colon E(V)\to E(V)$ is an isomorphism. Thus $f$ is a homotopy
equivalence and hence the result.
\end{proof}

Let $H_*(\Omega Y)$ be filtered by the products of the augmentation
ideal. For spaces $X$ and $Z$ such that $H_*(X)$ and $H_*(Z)$ are
filtered, a map $f\colon X\to Z$ is called \textit{filtered} of
$f_*\colon H_*(X)\to H_*(Z)$ is a filtered map.  A retract $X$ of
$\Omega Y$ is called \textit{filtered} if there is a filtered self
map $f$ of $\Omega Y$ such that $X\simeq \hocolim_f\Omega Y$. Note
that for each filtered retract $X$ of $\Omega Y$ there is a
filtration on $H_*(X)$ induced from the filtration of $H_*(\Omega
Y)$. Consider all possible filtered retracts of $\Omega Y$. Divide
them in two types: a filtered retract $X$ of $\Omega Y$ is called of
\textit{type $A$} if $X$ contains the bottom cells of $\Omega Y$,
otherwise it is called of \textit{type B}.

\begin{lem}\label{lemma6.2}
For any $p$-local simply connected co-$H$-space $Y$, there exists a
minimal filtered retract $X^{\min}(Y)$ of $\Omega Y$ of type $A$
such that: (1) $X^{\min}(Y)$ is a filtered retract of $\Omega Y$ of
type $A$ and (2) any filtered retract of $\Omega Y$ of type $A$ is a
filtered retract of $X(Y)$. Moreover there is a filtered
decomposition
$$
\Omega Y\simeq X^{\min}(Y)\times \Omega Q.
$$
\end{lem}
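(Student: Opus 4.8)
The plan is to transplant to the fixed-$Y$, non-functorial setting the construction of the algebraic functor $A^{\min}$ from \cite[Theorem 4.12]{SW1}, exploiting the tensor-length decomposition of $T(V)$, $V=\Sigma^{-1}\bar H_*(Y)$, the finiteness of $H_*(\Omega Y;\Z/p)$ in each degree, and the completeness of the augmentation ideal filtration. Throughout one works on the associated graded $E^0H_*(\Omega Y)=T(V)$, on which the augmentation ideal filtration is the tensor-length grading.

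First I would set up a dictionary between filtered retracts and algebra. If $X$ is a filtered retract of $\Omega Y$ with section $s_X\colon X\to\Omega Y$ and retraction $r_X\colon\Omega Y\to X$, then $e_X=s_{X*}r_{X*}\colon T(V)\to T(V)$ is a tensor-length-graded coalgebra idempotent, $E^0H_*(X)=\Image(e_X)$ is a tensor-length-graded coalgebra retract of $T(V)$, and $X$ is of type $A$ precisely when $\Image(e_X)$ contains the bottom cells $C_0\subseteq V=T_1(V)$. Conversely, the geometric realization machinery of \cite{STW2} (the engine behind Theorems~\ref{theorem4.1} and \ref{theorem4.2}) realizes such an algebraic idempotent by a filtered self-map of $\Omega Y$: one lifts it degree by degree along $\Sigma\Omega Y\simeq\bigvee_n[\Sigma\Omega Y]_n$, with $\bar H_*([\Sigma\Omega Y]_n)=\Sigma V^{\otimes n}$, and then applies Lemma~\ref{lemma4.4} to extract the corresponding filtered retract.

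The heart of the matter is the algebraic claim that among all tensor-length-graded coalgebra retracts of $T(V)$ containing $C_0$ there is a least one, $A^{\min}_0$. I would argue as in \cite[Theorem 4.12]{SW1}: a descending chain of such retracts stabilizes in each tensor length $n$ because $\End(T_n)\cong\End_{\bfk\Sigma_n}(\gamma_n)$ is a finite-dimensional $\bfk$-algebra, and the tensor-length-wise intersection of two such retracts is again a coalgebra retract containing $C_0$, so the poset is downward directed and has a minimum $A^{\min}_0$. Realizing $A^{\min}_0$ topologically as above gives $X^{\min}(Y)=\hocolim_{f^{\min}}\Omega Y$, with structure maps $s^{\min},r^{\min}$ and $E^0H_*(X^{\min}(Y))=\Image(e^{\min})=A^{\min}_0$; this is a type-$A$ filtered retract by construction. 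For the universal property, given any type-$A$ filtered retract $X$ one has $A^{\min}_0=\Image(e^{\min})\subseteq\Image(e_X)$, and since an idempotent restricts to the identity on its image the composite
$$
X^{\min}(Y)\xrightarrow{s^{\min}}\Omega Y\xrightarrow{r_X}X\xrightarrow{s_X}\Omega Y\xrightarrow{r^{\min}}X^{\min}(Y)
$$
induces on $E^0$-homology the map $r^{\min}_*e_Xs^{\min}_*=r^{\min}_*s^{\min}_*=\id_{\Image(e^{\min})}$. Being a filtered self-map of $X^{\min}(Y)$ that is an $E^0$-isomorphism, it is a mod-$p$ homology isomorphism, hence a homotopy equivalence ($X^{\min}(Y)$ being $p$-local nilpotent); this exhibits $X^{\min}(Y)$ as a filtered retract of $X$.

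Finally, for the product decomposition, note that $\bar A^{\min}(Y)$ is itself a filtered retract of $\Omega Y$ of type $A$, so by the universal property $X^{\min}(Y)$ is a filtered retract of $\bar A^{\min}(Y)$; feeding this into the decomposition $\Omega Y\simeq\bar A^{\min}(Y)\times\bar B^{\max}(Y)$ of Theorem~\ref{theorem4.2}, where $\bar B^{\max}(Y)=\Omega\big(\bigvee_{n\ge2}\bar Q^{\max}_n(Y)\big)$ is a loop space, and applying the loop-space splitting techniques of \cite{STW2}, one produces a space $Q$ and a filtered equivalence $\Omega Y\simeq X^{\min}(Y)\times\Omega Q$. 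I expect the main obstacle to be the algebraic minimality step, and in particular the assertion that the intersection of two tensor-length-graded coalgebra retracts of $T(V)$ is again a coalgebra retract, so that a genuine least element exists — this is exactly where one must work degreewise with the finite-dimensional algebras $\End(T_n)$ and the comultiplication bookkeeping of \cite[Section~4]{SW1}. A secondary point needing care is verifying that every map in sight preserves the augmentation ideal filtration, and that the chosen idempotent $e^{\min}$ is genuinely realizable over $\Omega Y$ via \cite{STW2}.
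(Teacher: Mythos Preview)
Your route is genuinely different from the paper's, and the step you flag as ``secondary'' is in fact the real gap. The paper never passes through algebra: it works directly with the poset $\calS$ of type-$A$ filtered retracts of $\Omega Y$, ordered by ``is a filtered retract of''. Given $X_1,X_2\in\calS$ it forms the composite
$$
f\colon\Omega Y\rOnto X_1\rInto\Omega Y\rOnto X_2\rInto\Omega Y
$$
and takes $X_3=\hocolim_f\Omega Y$ as a common lower bound; Zorn's lemma (with finite type) then produces $X^{\min}(Y)$. For the product decomposition the paper pulls back the Hopf fibration $\Sigma X^{\min}(Y)\wedge X^{\min}(Y)\to\Sigma X^{\min}(Y)$ along the adjoint $j\colon Y\to\Sigma X^{\min}(Y)$ of the retraction: the resulting connecting map $\partial\colon\Omega Y\to X^{\min}(Y)$, composed with the inclusion $X^{\min}(Y)\hookrightarrow\Omega Y$, is a type-$A$ filtered self-map of $X^{\min}(Y)$ and hence, by minimality, a homotopy equivalence; this gives $\Omega Y\simeq X^{\min}(Y)\times\Omega Q$ directly.

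The gap in your argument is the geometric realizability of the algebraic idempotent $e^{\min}$. The machinery of \cite{STW2} (Theorems~\ref{theorem4.1} and~\ref{theorem4.2} here) realizes only \emph{natural} coalgebra retracts of $T$ --- those coming from functors on ungraded modules, subsequently evaluated on the graded $V$. Your $A^{\min}_0$, the smallest coalgebra retract of $T(V)$ containing only the bottom-degree piece $C_0\subsetneq V$, is intrinsically non-functorial: it depends on the internal grading of this particular $V$ and is not of the form $A(V)$ for any functor $A$ on ungraded modules. Nothing in the cited sources produces a self-map of $\Omega Y$ (or even of $[\Sigma\Omega Y]_n$) from an arbitrary element of $\End_{\bfk}(V^{\otimes n})$ as opposed to one coming from $\End_{\bfk(\Sigma_n)}(\gamma_n)$, so the claim that $e^{\min}$ lifts to a filtered self-map of $\Omega Y$ is unsupported. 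Your proposed mechanism --- realize the idempotent piecewise on $[\Sigma\Omega Y]_n$ and apply Lemma~\ref{lemma4.4} --- at best yields a self-map of $\Sigma\Omega Y$, and after adjoining and composing with $\Omega\sigma$ one loses control of the induced map on $E^0$. The paper avoids all of this by never leaving the topological category: minimality is extracted from geometric retracts one already possesses, not from an algebraic object one must then realize.
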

\begin{proof}
Let $\calS$ be the set of all filtered retract of $\Omega Y$ of type
$A$. Define a partial order on $\calS$ by setting $X\leq X'$ if $X$
is a filtered retract of $X'$. From $X_1,X_2\in \calS$, let $f$ be
the composite
$$
\Omega Y\rOnto X_1\rInto \Omega Y\rOnto X_2\rInto \Omega Y.
$$
Let $X_3=\hocolim_f\Omega Y$. Then $X_3$ is common filtered retract of
$X_1$ and $X_2$. Namely $X_3\leq X_1$ and $X_3\leq X_2$. By Zorn
Lemma, $\calS$ has the minimal element $X^{\min}(Y)$. Consider the
commutative diagram
\begin{diagram}
\Omega\Sigma X^{\min}(Y)&\rTo& X^{\min}(Y)&\rTo &\Sigma
X^{\min}(Y)\wedge X^{\min}(Y)&\rTo^{H}&\Sigma X^{\min}(Y)\\
\uTo>{\Omega j}&&\uEq&&\uTo&\textrm{pull}& \uTo>{j}\\
\Omega Y&\rTo^{\partial}& X^{\min}(Y)&\rTo& Q&\rTo& Y\\
\end{diagram}
where $j$ is the adjoint map of the retraction $\Omega Y\to
X^{\min}(Y)$. Then the composite
$$
X^{\min}(Y)\rTo \Omega Y\rTo^{\partial} X^{\min}(Y)
$$
is a homotopy equivalence by the minimal assumption of
$X^{\min}(Y)$. It follows that there is a filtered decomposition
$$
\Omega Y\simeq X^{\min}(Y)\times \Omega Q
$$
and hence the result.
\end{proof}

\begin{proof}[Proof of Theorem~\ref{theorem1.6}]
\noindent $(1)\Longrightarrow (2).$ If $\EHP$ fibration splits off,
then $\bar E(Y)$ is a retract of $\bar A^{\min}(Y)$.  Note that
$\bar A^{\min}(Y)$ is an $H$-space as it is a retract of $\Omega Y$.
Thus $\bar E(Y)$ is an $H$-space.

$(2)\Longrightarrow (3)$. From the suspension splitting theorem
(Theorem~\ref{theorem4.8}),
$$
\Sigma \bar A^{\min}(Y)\simeq \bigvee_{n=1}^\infty \bar
A^{\min}_n(Y)
$$
with $\bar A^{\min}_1(Y)=Y$ and $\Sigma^{-1}\bar H_*(\bar
A^{\min}_n(Y))=A^{\min}(\Sigma^{-1}\bar H_*(Y))$. The composite
$$
\Sigma \bar E(Y)\rTo \Sigma \bar A^{\min}(Y)\rTo^{\mathrm{proj.}}
\bigvee_{n=1}^{p-1}\bar A^{\min}_n(Y)
$$
is a homotopy equivalence because $H_*(\bar E(Y))\to H_*(\bar
A^{\min}(Y))$ is a monomorphism and the connectivity of
$\bigvee_{n=p}^\infty \bar A^{\min}_n(Y)$ is greater than the
dimension of $\Sigma \bar E(Y)$. Thus $Y$ is a retract of
$\Sigma\bar E(Y)$. By Theorem~\ref{theorem1.5}, $H^*(\bar
E(Y))=E(\Sigma^{-1}\bar H^*(Y))$ and so $\bar E(Y)$ is an $H$-space
with $Y$ as a retractile generating complex.

 $(3)\Longrightarrow (1)$.
Let $X$ be any $H$-space having $Y$ as a retractile generating
complex. Since $Y$ is a retract of $X$. Let $s\colon Y\to \Sigma X$
be an inclusion and let $r\colon \Sigma X\to Y$ be the retraction
such that $H^*(X)$ is generated by $M=r^*(\Sigma^{-1}\bar H_*(Y))$
with $M\cong QH^*(X)$. Let $V=\Sigma^{-1}\bar H_*(Y)$. Since
$H_{\odd}(Y)=0$, $V_{\even}=0$. Note that $H_*(X)$ is a quasi-Hopf
algebra as $X$ is an $H$-space. Thus $H^*(X)$ is the exterior
algebra generated by $V^*$ by the Borel Theorem~\cite[Theorem
7.11]{MM}. There is a homotopy commutative diagram of fibre
sequences
\begin{equation}\label{equation6.1}
\begin{diagram}
\Omega\Sigma X&\rTo^{\tilde\partial}& X&\rTo&\Sigma X\wedge X&\rTo^{H}& \Sigma X\\
\uTo>{\Omega s}&&\uEq&&\uTo&\textrm{pull}&\uTo>{s}\\
\Omega Y&\rTo^{\partial}& X&\rTo& B&\rTo& Y,\\
\end{diagram}
\end{equation}
where $H\colon \Sigma X\wedge X\to \Sigma X$ is the Hopf fibration
for the $H$-space $X$. Let $r'\colon X\to \Omega Y$ be the adjoint
map of $r$. Since the dimension of $X$ less than the connectivity of
$\bar A^{\min}(\Sigma^{b_Y-p+1}(Y))$, the composite
$$
X\rTo^{r'}\Omega Y\rOnto \bar A^{\min}(Y)
$$
lifts to the fibre $\bar E(Y)$ of the $\EHP$-fibration. Let
$r''\colon X\to \bar E(Y)$ be a lifting of the above composite.
Consider the composite
\begin{equation}\label{equation6.2}
\begin{diagram}
\phi\colon \Omega Y&\rTo^{\partial}& X&\rTo^{r''}&\bar E(Y)&\rTo^E&\bar A^{\min}(Y)&\rInto& \Omega Y\\
\uInto>{j}&&\uInto>{j}&&\uInto>{j}& &\uInto>{j}&&\uInto>{j}\\
\bigvee_{\alpha}S^n&\rEq&\bigvee_{\alpha}S^n&\rEq&
\bigvee_{\alpha}S^n&\rEq&\bigvee_{\alpha}S^n&\rEq&\bigvee_{\alpha}S^n,\\
\end{diagram}
\end{equation}
where $j$ is the inclusion of the bottom cells. Let
$$
\bar X=\hocolim_{\phi}\Omega Y.
$$
Then $\bar X$ is a common retract of $X$, $\bar E(Y)$, $\bar
A^{\min}(Y)$ and $\Omega Y$, which contains the bottom cell. Since
$Y$ is atomic, $\bar E(Y)$ is also atomic by Lemma~\ref{lemma6.1}.
Thus the retraction
$$
\bar E(Y)\rTo^E \bar A^{\min}(Y)\rInto \Omega Y\rTo
\hocolim_{\phi}\Omega Y=\bar X
$$
is a homotopy equivalence. It follows that the $\EHP$-sequence
$$
\bar E(Y)\rTo^E \bar A^{\min}(Y)\rTo^{H_p} \bar
A^{\min}(\Sigma^{b_Y-p+1}Y)
$$
admits a retraction $\bar A^{\min}(Y)\to \bar E(Y)$ and so the
$\EHP$ fibration splits off.

Now $(1)\Longrightarrow (4)$ and $(4)\Longrightarrow (5)$ are
obvious. $(5)\Longrightarrow (6)$ follows from the homotopy
commutative diagram
\begin{diagram}
\Omega \bar A^{min}(\Sigma^{b_Y-p+1}Y)&\rTo^P& \bar E(Y)&\rTo^E&
\bar A^{\min}(Y)&\rTo^{H_p}& \bar A^{\min}(\Sigma^{b_Y-p+1}Y)\\
\uInto&&\uTo&&\uTo>{g}&&\uInto\\
\Sigma^{b_Y-p-1}Y&\rTo& \ast&\rTo&\Sigma^{b_Y-p}Y
&\rEq&\Sigma^{b_Y-p}Y,\\
\end{diagram}
where the bottom row is the cofibre sequence.

$(6)\Longrightarrow (3)$. Consider the commutative diagram
\begin{diagram}
\bar A^{\min}(\Sigma^{b_Y-p+1}Y)&\rInto& \Omega \Sigma^{b_Y-p+1}Y
&\rTo^{\tilde g}&\Omega Y&\rOnto& \bar
A^{\min}(Y)\\
 & \luInto&\uInto& & \uInto&\ruEq &\\
 &     &\Sigma^{b_Y-p}Y&\rTo^{g}&\bar A^{\min}(Y),& &\\
\end{diagram}
where $\tilde g$ is the $H$-map induced by $g$. Let
$\bigvee_{\alpha}S^q$ be the bottom cells of $\Sigma^{b_Y-p}Y$.
Observe that
\begin{equation}\label{equation6.3}
 H_{p\ast}\colon
H_q(\bar A^{\min}(Y))\rTo^{\cong} H_q(\bar
A^{\min}(\Sigma^{b_Y-p+1}Y)).
\end{equation}
Thus there is a commutative diagram
\begin{equation}\label{equation6.4}
\begin{diagram}
\theta\colon \Omega Y & \rOnto & \bar A^{\min}(Y)&\rTo^{H_p}& \bar
A^{\min}(\Sigma^{b_Y-p+1}Y)&\rInto& \Omega \Sigma^{b_Y-p+1}Y
&\rTo^{\tilde g}&\Omega Y\\
\uTo& &\uTo>{g|_{\vee_{\alpha}S^q}}& &\uInto>{j} & &\uInto>{j}& & \uTo\\
\bigvee_{\alpha}S^q & \rEq &\bigvee_{\alpha}S^q&\rEq &
\bigvee_{\alpha}S^q& \rEq& \bigvee_{\alpha}S^q&\rEq& \bigvee_{\alpha}S^q,\\
\end{diagram}
\end{equation}
where $j$ is the inclusion of the bottom cells.

Let $H_*(\Omega Y)$ be filtered by the products of the augmentation
ideal. Then $H_*(\bar A(Y))$ has the induced filtration. Observe
that
$$
g_*(\bar H_*(\Sigma^{b_Y-p}Y))\subseteq I^pH_*(\bar
A^{\min}(Y))\subseteq I^pH_*(\Omega Y)
$$
because the top dimension of $H_*(\bar
E(Y))=\oplus_{n=1}^{p-1}A^{\min}(\Sigma^{-1}Y)$ is less than the
connectivity of $\Sigma^{b_Y-p}Y$. It follows that
$$
\tilde g_*\colon
H_*(\Omega\Sigma^{b_Y-p+1}Y)=H_*(J(\Sigma^{b_Y-p}Y))\rTo H_*(\Omega
Y)
$$
preserves the filtration. Thus $\theta$ is a
filtered map because the other factors of $\theta$ are
filtration-preserving maps on homology. Let
$$
Z=\hocolim_{\theta} \Omega Y
$$
be the homotopy colimit. Then $Z$ is a common filtered retract of
$\Omega Y$, $\bar A^{\min}(Y)$, $\bar A^{\min}(\Sigma^{b_Y-p+1}Y)$,
and $\Omega \Sigma^{b_Y-p+1}Y$. By Lemma~\ref{lemma4.4}, $\Sigma Z$
is a common graded retract of $\Sigma \Omega Y$, $\Sigma \bar
A^{\min}(Y)$, $\Sigma \bar A^{\min}(\Sigma^{b_Y-p+1}Y)$, and
$\Sigma\Omega \Sigma^{b_Y-p+1}Y$. Thus $[\Sigma Z]_p$ is a retract
of
$$
[\Sigma \bar A^{\min}(\Sigma^{b_Y-p+1}Y)]_p=\Sigma^{b_Y-p+1}Y
$$
containing the bottom cells by diagram~(\ref{equation6.4}). It
follows that
\begin{equation}\label{equation6.5}
 [\Sigma Z]_p=[\Sigma \bar A^{\min}(\Sigma^{b_Y-p+1}Y)]_p=\Sigma^{b_Y-p+1}Y
 \end{equation}
 because
$\Sigma^{b_Y-p+1}Y$ is atomic. Thus $Z$ contains $\Sigma^{b_Y-p}Y$
as the bottom piece.

Consider the filtered decomposition
$$
\Omega Y\simeq \bar B^{\max}(Y)\times \bar A^{\min}(Y).
$$
Since $Z$ is a filtered retract of $\bar A^{\min}(Y)$, there is a
further decomposition
$$
\Omega Y\simeq \bar B^{\max}(Y)\times Z\times A',
$$
where $A'$ is a filtered retract of $\Omega Y$ of type $A$. By
Lemma~\ref{lemma6.2}, $A'$ admits a further filtered decomposition
$$
A'\simeq B'\times X^{\min}(Y).
$$
This gives a filtered decomposition
\begin{equation}\label{equation6.6}
\Omega Y\simeq \bar B^{\max}(Y)\times Z\times B'\times X^{\min}(Y)
\end{equation}
and so the space $Q$ given in Lemma~\ref{lemma6.2} has the property that
\begin{equation}\label{equation6.7}
\Omega Q\simeq \bar B^{\max}(Y)\times Z\times B'.
\end{equation}
On homology
\begin{equation}\label{equation6.8}
PE^0H_*(Z)\oplus PE^0H_*(\bar B^{\max}(Y))\subseteq PE^0H_*(\Omega
Q).
\end{equation}
Let $V=\Sigma^{-1}\bar H_*(Y)$. From equation~(\ref{equation6.5}),
\begin{equation}\label{equation6.9}
\bar L_p(V)=A^{\min}_p(V)=\Sigma^{-1}\bar H_*([\Sigma \bar
A^{\min}(\Sigma^{b_Y-p+1}Y)]_p)\subseteq PE^0(\Omega Q).
\end{equation}
By equation~(\ref{equation6.8}),
$$
\bigoplus_{n=2}^{p-1}Q^{\max}_n(V)\oplus \bar L_p(V)\subseteq
PH_*(\Omega Q).
$$
Since $H_*(\Omega Q)$ is a Hopf algebra, the sub Hopf algebra
\begin{equation}\label{equation6.10}
B=\la \bigoplus_{n=2}^{p-1}Q^{\max}_n(V)\oplus \bar L_p(V)\ra
\subseteq H_*(\Omega Q).
\end{equation}
By Lemma~\ref{lemma3.2}, $B=B^{[1]}(V)$ with a short exact sequence
of Hopf algebras
$$
B\rInto T(V)\rOnto E(V).
$$
It follows that the Poincar\'e series
$$
\chi(H_*(\Omega Q))\geq \chi(B)=\frac{\chi(T(V))}{\chi(E(V))}.
$$
From the decomposition
$$
\Omega Y\simeq \Omega Q\times X^{\min}(Y),
$$
the Poincar\'e series
$$
\chi(H_*(X^{\min}(Y)))\leq \chi(E(V)).
$$
Since $X^{\min}(Y)$ is filtered retract of $\Omega Y$, $[\Sigma
X^{\min}(Y)]_1$ is a retract of $[\Sigma \Omega Y]_1=Y$. Because $Y$
is atomic, $[\Sigma X^{\min}(Y)]_1\simeq Y$. It follows that
$$
V\subseteq E^0H_*(X^{\min}(Y)).
$$
Since $X^{\min}(Y)$ is an $H$-space, $E^0H_*(X^{\min}(Y))$ is a Hopf
algebra and so
$$
\chi(H_*(X^{\min}(Y)))=\chi(E^0H_*(X^{\min}(Y)))\geq \chi(E(V)).
$$
Thus $\chi(H_*(X^{\min}(Y)))=\chi(E(V))$ and so
$H_*(X^{\min}(Y))=E(V)$ as coalgebras. It follows that $X^{\min}(Y)$
is an $H$-space having $Y$ as a retractile generating complex and
hence statement (3). The proof is finished.
\end{proof}

\end{document}